\newtheorem{thm}{Theorem}[section]
\newtheorem{cor}[thm]{Corollary}
\newtheorem{lemma}[thm]{Lemma}
\newtheorem{obs}[thm]{Observation}
\newtheorem{question}[thm]{Question}
\newtheorem{dfn&prop}[thm]{Definition and Proposition}
\newtheorem{dfn&thm}[thm]{Definition and Theorem}
\newtheorem*{Hormander}{Hörmander's Theorem}
\theoremstyle{definition}
\newtheorem{observation}[thm]{Observation}
\theoremstyle{remark}
\newtheorem*{Claim}{Claim}
\newtheorem*{remark}{Remark}
\numberwithin{equation}{section}
\newenvironment{subproof}{\begin{proof}[Proof of claim.]}{%
\end{proof}}
\newenvironment{subproof*}{\begin{proof}[Sketch of proof.]}{%
\end{proof}}
\def\phi{\varphi}
\def\C{{\mathbb{C}}}
\def\D{{\mathbb{D}}}
\def\N{{\mathbb{N}}}
\def\R{{\mathbb{R}}}
\newcommand{\dist}{\operatorname{dist}}
\newcommand{\Ima}{\operatorname{Im}}
\newcommand{\Rea}{\operatorname{Re}}
\newcommand{\Arg}{\operatorname{Arg}}
\newcommand \eps {\varepsilon}
\newcommand \nin{\not\in}
\newcommand* \bb [1]{\left({#1}\right)}
\newcommand* \sbb [1]{\left[{#1}\right]} %square big brackets
\newcommand* \bset[1] {\left\{{#1}\right\}}
\newcommand* \integrate[4]{\int_{#1}^{#2}{#3}d{#4}} %the first 2 parameters are the integration limits, then the function and the last one is w.r.t which measure are you integrating
\newcommand* \limit [2]{\underset{{#1}\rightarrow{#2}}{\lim}\;}
\newcommand* \bunion[3]{\bigcup_{{#1} = {#2}}^{#3}}
\newcommand* \sumit [3]{\underset{{#1} = {#2}}{\overset{#3}\sum}\;}
\newcommand* \prodit [3]{\underset{{#1} = {#2}}{\overset{#3}\prod}\;}
\newcommand* \abs[1] {\left|{#1}\right|}
\newcommand*{\defeq}{\mathrel{\vcenter{\baselineskip0.5ex \lineskiplimit0pt
                                    \hbox{\scriptsize.}\hbox{\scriptsize.}}}%
            =}
\author[{V. Evdoridou \and A. Gl\"ucksam \and L. Pardo-Sim\'on}]{Vasiliki Evdoridou \and Adi Gl\"ucksam \and Leticia Pardo-Sim\'on}
\address{\noindent School of Mathematics and Statistics \\The Open University\\ Walton Hall\\ Milton Keynes MK7 6AA\\
	UK \\ \textsc{\newline \indent \href{https://orcid.org/0000-0002-5409-2663}{\includegraphics[width=1em,height=1em]{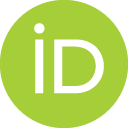} {\normalfont https://orcid.org/0000-0002-5409-2663}}}}
\email{vasiliki.evdoridou@open.ac.uk}
\address{\noindent Dept. of Mathematics \\ Northwestern University \\ Evanston \\ IL 60208 \\ US \\ 
	\textsc{\newline \indent 
		\href{https://orcid.org/0000-0002-6957-9431%
		}{\includegraphics[width=1em,height=1em]{orcid2.png} {\normalfont https://orcid.org/0000-0002-6957-9431}}%%
}}
\email{adiglucksam@gmail.com}
\address{\noindent Dept. of Mathematics \\ The University of Manchester \\ Manchester \\ M13 9PL \\ UK\\ 
	\textsc{\newline \indent 
		\href{https://orcid.org/0000-0003-4039-5556%
		}{\includegraphics[width=1em,height=1em]{orcid2.png} {\normalfont https://orcid.org/0000-0003-4039-5556}}%%
}}
\email{leticia.pardosimon@manchester.ac.uk}
\thanks{2020 Mathematics Subject Classification. Primary 37F10; secondary 30D05. Key words: entire functions, wandering domains, fast escaping set.}
\begin{document}

\title[Unbounded wandering domains]{Unbounded fast escaping wandering domains}

\maketitle
\begin{abstract}
We introduce a new approximation technique into the context of complex dynamics that allows us to construct examples of transcendental entire functions with unbounded wandering domains. We provide examples of entire functions with an orbit of unbounded \textit{fast escaping} wandering domains, answering a long-standing question of Rippon and Stallard. Moreover, these examples cover all possible types of simply-connected wandering domains in terms of \textit{convergence to the boundary}. In relation to a conjecture of Baker, it was unknown whether functions of order less than one could have unbounded wandering domains. For any given order greater than $1/2$ and smaller than~$1$, we provide an entire function of such order with an unbounded wandering domain. 
\end{abstract}
\section{Introduction}

Let $f\colon\C\to\C$ be a transcendental entire function and for every $n\geq 0$, denote by $f^n$ its $n$-th iterate.  The  \textit{Fatou set} of $f$, $F(f)$, consists of all the points $z\in\C$ for which the family $\{f^n\}_{n\in \N}$ is equicontinuous with respect to the spherical metric. Its complement, $J(f) \defeq \C\setminus F(f)$, is known as the \emph{Julia set}. The Fatou set of an entire map is always open and consists of connected components, called \textit{Fatou components}. For a Fatou component $U$ of $f$ and each $n\geq 0$, we denote by $U_n$ the Fatou component containing $f^n(U)$, and call the sequence $(U_n)_{n\geq 0}$ an \textit{orbit}. A Fatou component $U$ may be \textit{periodic}, if $U_n \subseteq U$ for some $n\in \mathbb{N},$ \textit{preperiodic}, if $U_n$ is periodic for some $n \in \mathbb{N},$ or, otherwise, $U$ is called a \textit{wandering domain}. Equivalently, $U$ is a wandering domain if $U_n=U_m$ implies $n=m$. 

Baker \cite{Baker_wandering_76} was the first to provide an example of a transcendental entire function with a  wandering domain in 1976, while Sullivan \cite{Sullivan_noWD_85} showed in 1985 that they do not occur for rational functions. Since then, many examples of transcendental entire functions with wandering domains, most of them bounded, have been provided, e.g. \cite{Baker_wandering84, Herman_84,Erem_Lyubich_pathological_87, Devaney_90, fagella_henriksen_Teich_09}. In recent years, with the development of new techniques, much progress on the understanding of  wandering domains has been achieved, e.g.  \cite{bishop2015, Lazebnik_several_17, david_Shishikura_Wandering, Fagella_Kirill_19,BEFRS_internal22,BocThaler_21,  lasse_DJ_Erem_22}.

All limit functions of the family of iterates in a wandering domain are constant \cite{Fatou_20}, and so they can be classified into \textit{escaping}, when the only limit function is infinity, \textit{oscillating}, if both infinity and at least one other finite value are limit functions, and  \textit{dynamically bounded}, when all limit functions are in $\C$. It remains one of the major open problems in transcendental dynamics whether dynamically bounded wandering domains exist. In this paper, we focus on escaping wandering domains.

For a transcendental entire function $f$, points might converge to infinity under iteration at different rates. Of particular importance is the \textit{fast escaping set}, $A(f)$, of points  that escape to infinity ‘as fast as possible’ under iteration. This set, introduced by Bergweiler and Hinkkanen \cite{Bergweiler_Hinkkanen_fast99} in connection to permutable functions and absence of wandering domains, is defined in \cite{PG_fastescaping_12} as
\begin{equation*}
	A(f) \defeq \{z\colon  \text{there exists } \ell \in \N \text{ such that } \vert f^{n+\ell}(z) \vert\geq M_f^n(R), \text{ for } n\in \N\},
\end{equation*}
where $M_f(r) \defeq \max_{\vert z \vert=r}|f(z)|$, for $r >0$, is the maximum modulus function, $M_f^n(r)$ denotes the $n$-th iterate of the function $M_f(r)$, and $R >0$ can be taken to be any value such that $M_f(r)> r$, for $r\geq R$.  It follows from \cite[Theorem 1.2]{PG_fastescaping_12} that if $U$ is a component of $F(f)$ such that $U\cap A(f)\neq \emptyset$, then $\overline{U}\subset A(f)$, and we say that $U$ is \textit{fast escaping}. We note that all fast escaping components of $F(f)$ must be wandering domains; see \cite[Lemma 4]{Bergweiler_Hinkkanen_fast99}.

Multiply connected wandering domains are always bounded and fast escaping, \cite{Baker_wandering84, PG_questions05}. However, only Bergweiler \cite{bergweiler_fast11} and Sixsmith \cite{dave_fast12} have provided examples of simply connected, fast escaping wandering domains, which are shown to be bounded in both cases. This led Rippon and Stallard to ask whether there exist unbounded fast escaping wandering domains, \cite[Question 1, p.~802]{PG_fastescaping_12}; see also \cite[p.~453]{Benini_permutable16} for the connection of this problem to that of permutable functions having the same Julia set. We give a positive answer to Rippon and Stallard's question. 

\begin{thm}\label{thm_fast_escaping}
	There exists a transcendental entire function with an orbit of unbounded fast escaping wandering domains.
\end{thm}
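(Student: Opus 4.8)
The plan is to construct $f$ as a limit of a sequence of entire functions obtained by an iterative approximation scheme, in the spirit of Arakelyan/Runge-type approximation but adapted to the dynamical setting. The guiding idea is to prescribe, on an explicitly chosen nested sequence of unbounded closed sets $K_0, K_1, K_2, \dots$ (each being, say, a long thin ``tube'' or a union of a disc with a logarithmic spiral-like strip going to infinity), a model map $g_n\colon K_n \to K_{n+1}$ that behaves like a translation-plus-dilation and that drags points out to infinity as fast as the maximum modulus allows. One then invokes a carefully quantified approximation theorem to produce a single entire $f$ with $|f - g_n|$ extremely small on $K_n$ for every $n$; the errors must be summable against the expansion/contraction estimates so that shadowing holds. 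The domain $U$ is taken to be a component of $F(f)$ containing a definite sub-tube of $K_0$, and the $U_n$ are then forced to contain the corresponding sub-tubes of $K_n$. Unboundedness of each $U_n$ is built in by making each $K_n$ unbounded and the model maps univalent with controlled distortion, so that a whole unbounded piece of $K_n$ stays inside a single Fatou component.

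The key steps, in order, are: \textbf{(1)} choose the sets $K_n$ and model maps $g_n$ with explicit geometry, ensuring $g_n(K_n)$ is compactly contained (in the appropriate sense) in the interior of $K_{n+1}$ and that the $K_n$ escape at a fast-escaping rate, i.e.\ $\dist(K_{n+1},0) \gtrsim M^n$ of something; \textbf{(2)} verify that the union $\bigcup_n K_n$ (together with enough of $\C$ to make it an admissible set for approximation) has the topological properties needed to apply the approximation theorem introduced in the paper, with a uniform bound on how fast the approximation constants may be taken to zero; \textbf{(3)} apply the approximation theorem to get entire $f$ with $\sup_{K_n}|f-g_n| \le \eps_n$, where $(\eps_n)$ is chosen in advance small enough relative to the distortion/expansion of the $g_n$; \textbf{(4)} run a shadowing/telescoping argument: show by induction that $f^n$ maps a fixed sub-tube $T_0 \subset \operatorname{int}K_0$ into $\operatorname{int}K_{n}$, staying close to $g_{n-1}\circ\dots\circ g_0$, hence these iterates omit a large set and form a normal family on $T_0$, so $T_0 \subset F(f)$; \textbf{(5)} identify $U \supseteq T_0$ as a Fatou component, check $U_n \supseteq$ (an unbounded piece of $K_n$) so each $U_n$ is unbounded, check $U_n \ne U_m$ for $n \ne m$ (wandering) because the $K_n$ are eventually disjoint and escaping, and finally check $\overline{U} \subset A(f)$ using the fast-escaping rate from step (1) together with \cite[Theorem 1.2]{PG_fastescaping_12}.

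The main obstacle I expect is step \textbf{(2)}–\textbf{(3)}: making the approximation quantitatively compatible with the dynamics. One needs the sets $K_n$ to run off to infinity (so they are genuinely unbounded, which is the whole point), yet the approximating set must remain one on which the relevant approximation theorem applies with controllable constants, and crucially the prescribed values $g_n$ on $K_n$ must be globally consistent enough that a single entire function realizes all of them simultaneously with summably small error. There is tension between wanting $g_n$ to push points to infinity very fast (for fast escaping) and wanting to control the distortion so that an unbounded connected chunk of $K_n$ stays inside one Fatou component; the geometry of the tubes (their width as a function of the distance to $0$, and how the tube for $K_{n+1}$ must ``capture'' the image $g_n(K_n)$ with room to spare) has to be engineered so both hold. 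A secondary difficulty is ensuring $T_0 \subset F(f)$ rather than merely that the orbit stays in the tubes: one typically argues that the iterates omit enough of the plane (e.g.\ a fixed disc, or use that the complement of $\bigcup K_n$ contains large round discs) to invoke Montel, and this requires the geometry of the $K_n$ to leave such omitted discs available at every scale.
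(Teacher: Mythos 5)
Your overall architecture --- unbounded tubes $K_n$ escaping to infinity, model maps $g_n\colon K_n\to K_{n+1}$, a single entire $f$ approximating all of them, a shadowing argument, Montel, and a growth condition forcing fast escape --- is exactly the skeleton of the paper's proof (there the $K_n$ are horizontal strips $S+i\tau_n$, and the gaps between them are sent into an attracting basin). However, the step you defer to ``a carefully quantified approximation theorem'' is not a technicality that falls out of Arakelyan/Runge-type results: it is the entire content of the theorem, and as written your plan has a genuine gap there. To conclude $U\subset A(f)$ you must verify $|f^n(z)|\ge M_f^n(R)$, where $M_f$ is the maximum modulus of the very function you have just built; so besides $\sup_{K_n}|f-g_n|\le\eps_n$ you need an \emph{explicit a priori upper bound for $|f|$ on the whole region below the $n$-th tube, depending only on the data up to stage $n$}, so that the escape rate of the tubes can be chosen inductively to dominate it. Classical uniform approximation on unbounded closed sets gives no control whatsoever on $|f|$ off the approximation set, which is precisely why unbounded fast escaping wandering domains had resisted construction. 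The paper supplies the missing mechanism by solving a $\bar\partial$-problem via H\"ormander's $L^2$ estimate with a hand-built subharmonic weight $u$: making $u$ very negative on the tubes yields rapidly decaying errors there, making $u$ large only on thin transition zones keeps the $L^2$ datum finite, and the global bound $u(z)\le 2a_k\exp\left(\pi|\Rea(z)|/\eps_k\right)$ below the $(k+1)$-st strip converts, via Cauchy--Schwarz on small discs, into $|f(z)|\le 4\exp\left(a_k\exp\left(\pi|\Rea(z)|/\eps_k\right)\right)$, which breaks the circularity because $a_k$ depends only on the first $k+1$ pieces of data, so $\tau_{k+1}$ can then be chosen larger than this bound evaluated at $\tau_k$.

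A secondary, fixable point: in step (5) you infer ``wandering'' from the $K_n$ being disjoint and escaping, but a preperiodic component whose orbits tend to infinity is a priori possible (a Baker domain), so disjointness of the tubes alone does not rule out $U_n=U_m$. Either deduce wandering from the fast escaping property (fast escaping Fatou components are automatically wandering, \cite[Lemma 4]{Bergweiler_Hinkkanen_fast99}), or do as the paper does and separate consecutive tubes by buffer strips mapped into a fixed attracting basin, which also pins each $U_n$ between two basin components.
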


We prove Theorem \ref{thm_fast_escaping} by introducing a new approximation technique into the world of complex dynamics allowing us to construct functions with unbounded wandering domains. Our method is based on Hörmander’s solution to $\bar\partial$-equations; see \hyperref[thm:Hormander]{Hörmander's Theorem} on p.9. In particular, it provides control over the growth of the resulting function while allowing for flexibility on the choice of domains of approximation, see Section \ref{sec:pasting} for details. 

While all orbits \textit{within} (pre)periodic Fatou components and multiply connected wandering domains behave essentially in the same manner, see \cite{Bergweilermerophormic, BergweilerPG_multiply13}, the internal dynamics of simply connected wandering domains exhibit a wider range of possibilities, and have only been completely classified recently. Namely, in \cite{BEFRS_internal22}, two classifications are provided, giving rise to three cases each. The first in terms of the long-term behaviour of orbits with respect to \textit{hyperbolic distances}, \cite[Theorem A]{BEFRS_internal22}, and the second with respect to \textit{convergence to the boundary}, \cite[Theorem C]{BEFRS_internal22}; for the definition, see Theorem~\ref{thm_internal}.  Examples of bounded escaping wandering domains of each possible type are provided in \cite{BEFRS_internal22}; and of bounded oscillating ones in \cite{evdoridou_rippon_stallard_2022}. This prompts the question of whether there are examples of unbounded wandering domains with all different possible internal dynamics.  We provide examples with all the different behaviours in terms of convergence to the boundary, showing that the function from Theorem \ref{thm_fast_escaping} can be chosen to have wandering domains of any of the types arising from \cite[Theorem~C]{BEFRS_internal22}.

\begin{thm}\label{thm_cases_fast}
For each of the three possible types of simply connected wandering domains in terms of convergence to the boundary, there exists a transcendental entire function with an orbit of unbounded fast escaping wandering domains of that type.
\end{thm}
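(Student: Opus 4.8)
The plan is to build on the construction behind Theorem~\ref{thm_fast_escaping} and refine the approximation data so that each wandering domain in the orbit has prescribed internal dynamics. Recall from \cite[Theorem~C]{BEFRS_internal22} that a simply connected wandering domain $(U_n)$ falls into exactly one of three types according to whether, for a Riemann map $\psi_n\colon \D \to U_n$ and the induced inner maps $g_n = \psi_{n+1}\inv \circ f \circ \psi_n$, orbits converge to the boundary $\partial\D$ (equivalently, whether $\dist(g_n\circ\cdots\circ g_0(0), \partial\D)\to 0$) and, if so, at what speed relative to the hyperbolic contraction. Following the strategy of \cite{BEFRS_internal22, evdoridou_rippon_stallard_2022}, the type is determined by the geometry of the ``transition maps'' between consecutive domains: roughly, Type~1 (orbit stays hyperbolically bounded away from the boundary in $U_n$) arises when the $U_n$ are, after normalization, uniformly fat around the orbit; Type~2 and Type~3 are produced by letting the normalized domains become progressively thinner near the marked orbit point, at two different rates.

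First I would fix, for each of the three target types, a sequence of model pairs $(\Omega_n, z_n)$ where $\Omega_n \subset \C$ is a bounded simply connected domain with marked point $z_n$ and $z_{n+1}$ a prescribed image point, chosen so that the associated inner-map composition exhibits the desired convergence-to-the-boundary behaviour; this is a purely one-complex-variable input and can be quoted or adapted almost verbatim from the constructions in \cite{BEFRS_internal22}. Second, I would place scaled and translated copies $\lambda_n \Omega_n + c_n$ along a sequence of annuli escaping to infinity fast enough to sit inside the fast escaping set, exactly as in the proof of Theorem~\ref{thm_fast_escaping}: the radii $|c_n|$ must grow at least like $M_f^n(R)$ for the relevant comparison function, which is what forces the wandering domains to be fast escaping. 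Third, I would invoke the $\dbar$-pasting machinery of Section~\ref{sec:pasting} (Hörmander's Theorem) to produce a transcendental entire $f$ that, on each scaled copy of $\Omega_n$, agrees up to a small error with the affine map sending $\lambda_n\Omega_n + c_n$ into $\lambda_{n+1}\Omega_{n+1} + c_{n+1}$ according to the chosen model dynamics, while keeping the global growth under control. Finally, I would check that the correction term from Hörmander is small enough — in the sup norm on the relevant copies, and after differentiation via a Cauchy-estimate argument on slightly larger domains — that the genuine Fatou components $U_n$ of $f$ containing these copies are (a) genuinely wandering, (b) unbounded (this comes from the same mechanism as in Theorem~\ref{thm_fast_escaping}, e.g.\ each $U_n$ contains an escaping ``channel''), and (c) have inner maps $g_n$ that are $C^1$-close on compact subsets to the model inner maps, so that by the stability of the trichotomy in \cite[Theorem~C]{BEFRS_internal22} the type is preserved.

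The main obstacle I expect is showing that the small but nonzero perturbation introduced by the $\dbar$-correction does not destroy the delicate asymptotic rate that distinguishes Type~2 from Type~3: convergence to the boundary is a tail property of the orbit, so one needs the errors to be summable (or at least controlled) along the entire infinite orbit and not merely small on each individual domain. This requires choosing the approximation tolerances $\varepsilon_n$ decaying fast enough — faster than the gap between the two model rates — and verifying that the hyperbolic-geometry estimates in \cite{BEFRS_internal22} that detect the type are robust under $\varepsilon_n$-perturbations of the inner maps; concretely, one wants a quantitative version of the statement ``if $g_n \to \widetilde g_n$ in $C^1_{\mathrm{loc}}$ fast enough, the type of $(U_n)$ equals the type of the model''. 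A secondary technical point is ensuring that the unbounded channels forcing unboundedness of each $U_n$ do not interfere with the prescribed internal dynamics near the marked orbit — this is handled by keeping the channels far (in the hyperbolic metric of $U_n$) from the orbit, so they contribute nothing to the convergence-to-the-boundary computation. Once these estimates are in place, Theorem~\ref{thm_cases_fast} follows by running the argument three times with the three families of models.
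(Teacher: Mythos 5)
Your overall strategy (use the H\"ormander pasting machinery with a family of model maps chosen to prescribe the internal dynamics, and make the approximation errors controlled along the whole orbit) is the right one, but two of your concrete steps would not go through as described. First, you propose to realise the models as scaled copies $\lambda_n\Omega_n+c_n$ of \emph{bounded} simply connected domains placed along escaping annuli, and to recover unboundedness of the resulting Fatou components afterwards via unspecified ``channels''. This defers exactly the point of the theorem: nothing in your construction forces the Fatou component containing $\lambda_n\Omega_n+c_n$ to be unbounded, and attaching an unbounded channel to each component while retaining control of the dynamics on it and of the global growth is the hard part. The paper takes the opposite route: the model domains are the unbounded horizontal strips $S+i\tau_k$ themselves, the maps $h_k\colon S\to S$ are self-maps of the strip, and Corollary \ref{cor_wd_pasting} yields the two-sided inclusion $S^{-\eps_k}+i\tau_k\subseteq U_k\subseteq S^{+\eps_k}+i\tau_k$, which gives unboundedness for free and pins $\partial U_k$ to within $2\eps_k$ of two horizontal lines.

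Second, your plan to detect the type via Riemann maps, inner maps $g_n$, and a ``stability of the trichotomy under $C^1$-perturbation'' statement is both unnecessary and misdirected for Theorem \ref{thm_internal}: that classification is stated in terms of the Euclidean quantity $\dist(f^n(z),\partial U_n)$, not in terms of inner maps on $\D$ (the latter is closer to the hyperbolic classification of \cite[Theorem A]{BEFRS_internal22}, which the paper treats separately in Observation \ref{obs:contracting}), and the three cases are distinguished by whether this distance stays bounded away from zero, tends to zero, or does both along different subsequences --- there is no asymptotic \emph{rate} separating the cases, so your worry about errors decaying ``faster than the gap between the two model rates'' does not arise. Because the trichotomy holds for every point of $U$ simultaneously, it suffices to track one explicit point $z=i\tau_1$; with the strip sandwich above, $\dist(f^n(z),\partial U_{n+1})$ is then computed directly from $\abs{f^n(z)-H_n(z)}\le\sum_k\delta_k\le 1/56$ when $h_k=\id$ (giving case (a)), or from $f^n(z)$ lying within $\delta_n$ of a constant value at distance $O(\eps_n)$ from the edge of $S+i\tau_{n+1}$ when $h_k$ is a constant near the boundary point $i/2$ (giving case (c)), with case (b) obtained by alternating the two choices. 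The one quantitative input you correctly identify --- that the errors $\delta_k$ must be summable along the orbit for the non-constant models --- is exactly what condition \eqref{eq_alphak} supplies.
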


\begin{remark}
In Observation \ref{obs:contracting} we explain how we can choose the above wandering domains to be `contracting' with respect to hyperbolic distances, and discuss the other two types arising from \cite[Theorem A]{BEFRS_internal22}.
	\end{remark}

The existence of unbounded wandering domains is related to a conjecture of Baker from 1981, \cite{baker_conjecture_81}.  Part of this conjecture states that a transcendental entire function $f$ of order less than half cannot have unbounded Fatou components. Recall that the \textit{order} of an entire map $f$ is defined as $\rho(f)\defeq \limsup_{r\to \infty}\frac{\log \log M_f(r)}{\log r}$. Zheng \cite{Zheng_unbounded_00} showed that such functions have no unbounded periodic or preperiodic Fatou components, and it is still an open question if such a map can have an unbounded wandering domain. In addition, the conjecture has been shown to hold in many instances where growth conditions are imposed on the function, see e.g. \cite{Hinkkanen_Miles_growth_09, PG_small_growth_09, PG_Baker_Erem_13, Nicks_PG_Baker18} and the survey \cite{Hinkkanen_survey_Baker_09}. 

Absence of unbounded wandering domains has also been shown for further functions of small order. Recently, Nicks, Rippon and Stallard \cite{Nicks_PG_Baker18} showed that real entire functions with only real zeros and of order less than one do not have orbits of unbounded wandering domains. Note however that there are known examples of functions of order one with orbits of unbounded wandering domains, as shown by Herman for the map $z \mapsto z-1+e^{-z}+2\pi i $, \cite{Herman_84}. This suggests the following question.
\begin{question}[{\cite[Problem 2.93, p. 61]{Hayman_book_50}, \cite[p. 101]{Nicks_PG_Baker18}}] Let $f$ be a transcendental entire function of order less than one. Can $f$ have (an orbit of) unbounded wandering domains?
\end{question}

The following theorem answers the question by providing the first examples of entire functions of order less than $1$ with unbounded wandering domains. 

\begin{thm}\label{thm_order1/2}
	For every $\eps\in (0,1/2]$, there exists an entire function of order $1/2+\eps$ with an unbounded fast escaping wandering domain.
\end{thm}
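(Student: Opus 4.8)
The plan is to adapt the construction behind Theorems \ref{thm_fast_escaping} and \ref{thm_cases_fast}, but to control the growth of the resulting function so that its order lands in the prescribed range $(1/2, 1]$. As in the proof of Theorem \ref{thm_fast_escaping}, we build the entire function $f$ via Hörmander's solution to the $\bar\partial$-equation: we first specify a model holomorphic map on a disjoint union of well-separated domains $V_n$ (the would-be wandering domains), pasting together a prescribed dynamical behaviour $f\colon V_n \to V_{n+1}$ using cut-off functions, and then correct the smooth interpolant to a genuine entire function by solving $\bar\partial u = g$ with an $L^2$-estimate governed by a plurisubharmonic weight $\phi$. The weight $\phi$ is what dictates the growth of $f$, and hence its order; so the heart of the matter is to choose the domains $V_n$, the target points $a_n \defeq$ (centre of $V_n$), and the weight so that $\rho(f) = 1/2 + \eps$ while retaining the estimates that make the pasting scheme converge.

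First I would fix the target centres $a_n$ and radii $r_n$ along a suitable curve escaping to infinity, choosing $|a_n|$ to grow roughly like $n^{1/(1/2+\eps)}$ — more precisely, tuned so that a function forced to map a point near $a_n$ to a point near $a_{n+1}$, with derivative of controlled size, has maximum modulus growing like $\exp(r^{1/2+\eps})$ along this sequence. Here the threshold $1/2$ is genuine rather than cosmetic: a transcendental entire function of order $< 1/2$ has, by the $\cos\pi\rho$ theorem, a sequence $r_k\to\infty$ with $\min_{|z|=r_k}|f(z)|\to\infty$, which obstructs exactly the kind of pasting that sends a bounded domain near $a_n$ to one near $a_{n+1}$; so our construction is only expected to work down to order $1/2 + \eps$, consistently with the statement. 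Second, I would run the pasting-and-$\bar\partial$ machinery of Section \ref{sec:pasting} with this data, obtaining an entire $f$ that agrees with the model up to a small additive error on each $V_n$; then invoke \cite[Theorem 1.2]{PG_fastescaping_12} (as in Theorem \ref{thm_fast_escaping}) to conclude that the Fatou component containing $V_0$ is an unbounded fast escaping wandering domain. Third, I would verify the order: the upper bound $\rho(f)\le 1/2+\eps$ comes from the $L^2$-bound on the correction term together with the growth of the weight $\phi$, controlled by standard Cauchy/Hörmander estimates; the lower bound $\rho(f)\ge 1/2+\eps$ comes from the fact that $f$ must carry points near $a_n$ close to $a_{n+1}$, forcing $M_f(|a_n|+O(r_n))\gtrsim |a_{n+1}|$ and hence the stated growth along the sequence $(|a_n|)$.

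The main obstacle I anticipate is the simultaneous optimisation of the weight: the pasting scheme needs the weight $\phi$ to grow fast enough to dominate the errors introduced by all the cut-offs near the (necessarily closely spaced, since $|a_n|$ grows sub-linearly when $\eps$ is small) domains $V_n$, yet slow enough that $\rho(f)$ does not overshoot $1/2+\eps$. When $\eps$ is close to $0$ the domains $V_n$ crowd together at scale comparable to their size, so the separation hypotheses underlying the $\bar\partial$-estimates become delicate, and one must either shrink the radii $r_n$ relative to the gaps $|a_{n+1}-a_n|$ or thicken the union along a spiralling curve to recover room; tracking how these choices feed back into the exponent is the crux. A secondary technical point is ensuring the additive error from the correction stays small enough, uniformly in $n$, that the perturbed dynamics $f\colon V_n\to V_{n+1}$ still has an attracting-type behaviour inside each $V_n$ guaranteeing that the $V_n$ lie in genuine Fatou components and that the orbit is non-periodic; this is handled, as elsewhere in the paper, by choosing the model map to be a sufficiently strong contraction towards $a_{n+1}$ so that a small perturbation cannot destroy the invariance of a slightly smaller sub-domain.
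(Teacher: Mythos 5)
Your general framework --- paste a model map with cut-offs, correct via H\"ormander's $\bar\partial$-theorem with a carefully chosen subharmonic weight, then read off the order from the weight --- is indeed the paper's strategy, and you correctly identify the $\cos\pi\rho$ theorem as the reason the threshold $1/2$ is genuine. But two ideas that actually make the construction work are missing. First, you never explain where the \emph{unbounded} wandering domain comes from. In the paper the unbounded component is an unbounded sector of opening $\eps$ around the negative real axis, which is collapsed by the model map to the single point $z_1$; the rest of the orbit then passes through a sequence of \emph{bounded} disks $B_n=B(z_n,z_n/9)$, each sent to the centre of the next. This is precisely the constructive use of the $\cos\pi\rho$ phenomenon you cite only as an obstruction: the weight $u(z)=|z|^{\alpha}\cos(\beta\Arg z)$ with $\alpha=\tfrac12+\eps$ and $\beta$ slightly above $\tfrac12$ is very negative on that sector, so the H\"ormander correction is tiny there and $f$ can be made nearly constant on an unbounded set without violating the order bound. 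Your plan, in which the orbit runs through a chain of domains $V_n$ with no one of them singled out as unbounded, does not produce an unbounded Fatou component; and indeed the paper's remark notes that only $U_0$ is known to be unbounded, the later $U_n$ containing only small disks $B(z_n,1/4)$.

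Second, your proposed spacing $|a_n|\sim n^{1/(1/2+\eps)}$ is inconsistent with your own lower bound for the order. If $f$ maps a point near $a_n$ close to $a_{n+1}$, the resulting estimate $M_f(|a_n|+O(r_n))\gtrsim|a_{n+1}|$ only yields order $\geq \tfrac12+\eps$ when $|a_{n+1}|\gtrsim\exp\bigl(|a_n|^{1/2+\eps}\bigr)$, i.e.\ tower (iterated-exponential) growth, which is exactly what the paper takes: $z_{n+1}=\exp\bigl(\tfrac16 z_n^{1/2+\eps}\bigr)/n$. With polynomial spacing the orbit would escape far too slowly to be fast escaping and the lower bound on $\rho(f)$ would collapse to $\rho(f)\geq 0$. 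Relatedly, your worry about the $V_n$ ``crowding together'' is an artefact of this wrong spacing; with the correct tower growth the disks $3B_n$ are automatically disjoint. Finally, the weight cannot simply be the strip-based one of Section 2 (which produces infinite order); it must be the sector-harmonic function above, made very negative on each $B_n$ by adding Poisson-integral ``puddles'' with logarithmic poles $A_n\log(|z-z_n|/r_n)$, where the constants $A_n$ are squeezed between a lower bound (to get good approximation on $B(z_n,1/2)$) and an upper bound (to preserve subharmonicity across $\partial B_n$). That balancing act is the technical heart of the proof and is absent from your sketch.
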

 \begin{remark}
The functions constructed in Theorems \ref{thm_fast_escaping} and \ref{thm_cases_fast} are of infinite order of growth, see \eqref{eq_tauk_fast} in Corollary \ref{cor_wd_pasting}. Therefore, new tools are required to prove Theorem~\ref{thm_order1/2}. We note that our construction ensures that the resulting function has one unbounded wandering domain, $U_0$. However, we were not able to show whether any element, $U_n$, in its orbit is bounded or unbounded; see Corollary \ref{cor:sequence} for details. It remains an open question whether there exists an entire function of order less than $1$ with an orbit of unbounded wandering domains.
\end{remark}

\subsection*{Notation} Given a set $U\subset\C$ and $\delta>0$,  we let
$$
U^{-\delta}\defeq\bset{z\in U \colon \dist(z,\C\setminus U)>\delta}
\quad \text{ and } \quad U^{+\delta}\defeq\bset{z \in \C \colon \dist(z, U)<\delta},$$
i.e., the set $U^{-\delta}$ is the subset of $U$ where all the points are separated from the boundary of $U$ by at least $\delta$ and the set $U^{+\delta}$ is a `fattening' or `padding' of the set $U$ by $\delta$.

Given a sequence $\bset{a_n}$ we will use the notation $a_n\searrow 0$ to indicate that $\bset{a_n}$ is monotone decreasing to $0$ and the notation $a_n\nearrow\infty$ to indicate that $\bset{a_n}$ is monotone increasing to $\infty$. We will denote by $S$ the strip of width one centred about the real line, i.e., $$S\defeq \{z\in \C \colon \abs{\Ima(z)}<1/2\},$$ and by $dm$ Lebesgue's measure on the complex plane. We denote by $B(z,r)$ the disk centred at $z$ of radius $r>0$ and by $n B(z,r), n \geq 2$, the disk centred at $z$ of radius $n\cdot r$. Given a set $T\subset \C$ and a constant $a\in \C$, we denote by $T+a$ the translation of $T$ by $a$, i.e., $T+a\defeq\{z+a \colon z\in T\}$. The Euclidean distance is denoted by $\dist$, the hyperbolic distance in a simply connected domain $U$ is denoted by $\dist_U$ and the symbol $\triangle$ denotes the end of the proof of a claim within the proof of a theorem. By \textit{numerical constant} we mean a constant number that does not depend on any parameter.

\subsection*{Acknowledgements}
We thank Dave Sixsmith for helpful comments and the referee for many comments and suggestions which greatly improved the presentation of the paper. This material is partially based upon work supported by the National Science Foundation under Grant No. DMS-1928930 while the authors participated in programs hosted by the Mathematical Sciences Research Institute in Berkeley, California, during the Spring 2022 semester.

%\newpage
\section{Orbits of unbounded wandering domains}\label{sec:pasting}

Our main goal in this section is to prove the following flexible construction theorem, from which Theorems \ref{thm_fast_escaping} and \ref{thm_cases_fast} will follow readily. Roughly speaking, it tells us that given a collection of holomorphic maps, $h_k$, defined on the horizontal strip $S$, there exists an entire function $f$ that approximates, for each $k \geq 1$, a translation of $h_k$ from $S^{-\eps_k}+i\tau_k$ to $S+i\tau_{k+1}$ for sequences $\eps_k \searrow 0$ and $\tau_k \nearrow \infty$, up to a prescribed error, $\delta_k$. Moreover, $f$ will map a strip of points in between each copy of $S$ close to the point $i\tau_0$; see Figure~\ref{fig:idea_pasting}. In particular, we will show in Corollary \ref{cor_wd_pasting} that $f$ has an orbit, $\{U_n\}$, of unbounded wandering domains, asymptotically equal to horizontal strips and separated by preimages of a basin of attraction containing $i\tau_0$, that are fast escaping whenever the translation parameters, $\tau_k$, diverge to infinity sufficiently fast. 

\begin{thm} \label{thm:pasting_strips}
Given 
\begin{enumerate}[label=(\roman*)]
	\item A sequence $\bset{\eps_k}\subset\bb{0,\frac14}$ with $\eps_k\searrow 0$;
	\item A sequence $\tau_k\nearrow\infty$ with $\tau_0\ge\frac32$, $\tau_{k+1}>\tau_{k}+1+\eps_{k}+\eps_{k+1}$ and such that 
	\begin{equation}\label{eq_alphak}
	\delta_k\defeq \frac3{\eps_k\cdot \tau_k^2}<\eps_{k+1}\to 0;
	\end{equation}
	\item The map $h_0\colon S\to \C$ defined as $h_0(z)\defeq i\bb{\tau_0-\tau_1}$  and a sequence $\{h_k \}_{k \geq 1}$ of holomorphic maps, $h_k\colon S\rightarrow S$,  such that 
	\begin{equation*}%\label{eq_hk}
	\abs{h_k(z)}\le\abs z^{n_k}+b_k,
	\end{equation*}
	 where $\bset{b_k}, \bset{n_k}\subset\N$ are  non-decreasing sequences;
\end{enumerate}
There exists a monotone increasing sequence $\bset{a_k}$, where $a_k$ depends only on $\{h_j\}^{k+1}_{j=0}$, $\{\eps_j\}^{k+1}_{j=0}$ and $\{\tau_{j}\}^{k+1}_{j=0}$, and an entire function $f\colon \C\rightarrow\C$ with the following properties.
\begin{enumerate}
	\item \label{item:pasting_f_hk} For every $k\ge 0$, for every $z\in S^{-\eps_k}+i\tau_k$,
	$$
	\abs{f(z)-\bb{h_k(z-i\tau_k)+i\tau_{k+1}}}< \delta_k.
	$$
	\item   \label{item:pasting_f_i}For every $k\ge 0$ and $z\in W_k\defeq \bset{w\colon \tau_k+1/2+\eps_k< \Ima(w)< \tau_{k+1}-1/2-\eps_{k+1}}$,
	$$
	\abs{f(z)-i\tau_0}<\delta_k.
	$$
	\item   \label{item:pasting_upperbound} If $\Ima(z)\le\tau_{k+1}-\frac32$, then 
	$$
	\abs{f(z)}\le 4\exp\bb{a_k\exp\bb{\frac\pi{\eps_k}\abs{\Rea(z)}}}.
	$$
\end{enumerate}
\end{thm}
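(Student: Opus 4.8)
The plan is to build $f$ by a standard pasting-and-correction argument based on Hörmander's $\bar\partial$-theory. First I would fix a partition-of-unity-free construction: on each horizontal band around height $\tau_k$, define a ``model'' holomorphic map
$$
g_k(z)\defeq h_k(z-i\tau_k)+i\tau_{k+1}\qquad\text{on a neighbourhood of }S^{-\eps_k}+i\tau_k,
$$
and on each intermediate band $W_k$ define the constant map $z\mapsto i\tau_0$. These models disagree only on thin overlap regions of vertical width $\sim\eps_k$ near the lines $\Ima(z)=\tau_k\pm(1/2+\eps_k)$. Using a smooth cutoff $\chi_k$ that is $1$ on $S^{-\eps_k}+i\tau_k$ and $0$ outside $S^{-\eps_k/2}+i\tau_k$ (and similarly on the $W_k$'s), glue all the models together into a smooth function $g\colon\C\to\C$. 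Then $\dbar g$ is supported on the overlap annuli, and on each such annulus $|\dbar g|$ is controlled by $|\partial\chi_k|\cdot(\text{size of the two competing models})$, hence by roughly $\eps_k^{-1}$ times the relevant sup-norm of $h_k$ or of $|i\tau_0-g_k|$.

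The second step is to solve $\dbar u=\dbar g$ with good pointwise bounds and set $f\defeq g-u$, which is automatically entire. To get the bound \eqref{item:pasting_upperbound} with a doubly-exponential profile of the stated shape $4\exp(a_k\exp(\tfrac{\pi}{\eps_k}|\Rea z|))$, I would apply Hörmander's theorem with a subharmonic weight $\phi$ that grows like $C\exp(\tfrac{\pi}{\eps_k}|\Rea z|)$ on the $k$-th band (so that $e^{-\phi}$ decays fast enough to absorb $\int|\dbar g|^2 e^{-\phi}$, which converges provided the $h_k$ — being polynomial-bounded — are dominated by the weight, and provided the weight is chosen band-by-band increasing in $k$); the constants $a_k$ then come out depending only on the data $\{h_j\}_{j\le k+1}$, $\{\eps_j\}_{j\le k+1}$, $\{\tau_j\}_{j\le k+1}$, exactly as claimed. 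The $L^2$ bound on $u$ is upgraded to a pointwise bound $|u(z)|\lesssim \delta_k$ on the bands (away from the overlaps) by the usual sub-mean-value inequality applied to the holomorphic function $u$ on a unit disc, together with the smallness of $\dbar g$: the key quantitative input is that on the overlap near height $\tau_k$ the mismatch is at most $|h_k|+|\tau_{k+1}|\lesssim \tau_{k+1}^{n_k}$ (on $S$), while the weight there is already $\gtrsim \exp(\pi/\eps_k)$, and the factor $\delta_k=3/(\eps_k\tau_k^2)$ in \eqref{eq_alphak} is precisely what one can achieve by choosing $\tau_k$ (and hence the weight) large compared with $n_k,b_k$. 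This is where assumptions (i)–(iii), especially $\tau_{k+1}>\tau_k+1+\eps_k+\eps_{k+1}$ (bands are disjoint) and $\delta_k<\eps_{k+1}\to0$, are used.

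The third step is bookkeeping: define $a_k$ recursively as the constant produced at stage $k$, check monotonicity, and verify \ref{item:pasting_f_hk} and \ref{item:pasting_f_i} by noting that on $S^{-\eps_k}+i\tau_k$ one has $g=g_k$ exactly and $|f-g_k|=|u|<\delta_k$, and on $W_k$ one has $g\equiv i\tau_0$ and again $|f-i\tau_0|=|u|<\delta_k$. The main obstacle, and the step I would spend the most care on, is the simultaneous choice of the single subharmonic weight $\phi$: it must (a) be large enough on band $k$ to force $\int|\dbar g|^2e^{-\phi}<\infty$ and to make the resulting pointwise error $\le\delta_k$, (b) not be so large that it destroys the upper bound \eqref{item:pasting_upperbound} — which forces $\phi$ on band $k$ to be comparable to $\exp(\tfrac{\pi}{\eps_k}|\Rea z|)$ and no larger, i.e. the weight is essentially pinned — and (c) be globally subharmonic despite being defined piecewise in $k$, which requires smoothing across the transition heights and checking that the transition contributes a controlled amount (this is feasible because $\tau_{k+1}-\tau_k\ge 1$ gives room, and $\exp(\pi/\eps_k)$ is increasing in $k$). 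Reconciling (a) and (b) is exactly why the quantitative hypothesis \eqref{eq_alphak} is imposed, and getting clean constants out of Hörmander's estimate — rather than just ``some constant'' — is the technical heart of the argument.
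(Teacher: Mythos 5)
Your overall architecture --- glue translated copies of the $h_k$ and the constant $i\tau_0$ with smooth cutoffs, solve $\bar\partial\alpha=\bar\partial(\text{model})$ via H\"ormander with a band-adapted subharmonic weight, and set $f=\text{model}-\alpha$ --- is exactly the paper's. But there is a genuine gap in the step that is the quantitative heart of the theorem, namely producing the error $\delta_k=3/(\eps_k\tau_k^2)$. You attribute the smallness of $|f-h|$ on $S^{-\eps_k}+i\tau_k$ to the weight being huge ($\gtrsim\exp(\pi/\eps_k)$) on the overlap strips, which makes $\int|\bar\partial g|^2e^{-\phi}$ small. That largeness is needed, but only to make the H\"ormander integral \emph{finite} (the paper arranges it to be $<1/2$, a single constant independent of $k$); it cannot by itself give a bound decaying in $k$. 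H\"ormander controls $\int|\alpha|^2e^{-\phi}(1+|z|^2)^{-2}$, so the sub-mean-value upgrade at a point $z$ gives $|\alpha(z)|\le r^{-1}\max_{B(z,r)}e^{\phi/2}(1+|w|^2)\cdot\bigl(\int|\bar\partial g|^2e^{-\phi}\bigr)^{1/2}$; if $\phi\ge 0$ near $z$ this is at best of order $(1+|z|^2)\approx\tau_k^2$ --- growing, not decaying. The missing idea is that the weight must be strongly \emph{negative} on the approximation regions: the paper takes $u=-8\log|z|$ there, so that $e^{u/2}(1+|w|^2)\lesssim|w|^{-2}\approx\tau_k^{-2}$, which is exactly where the $\tau_k^{-2}$ in $\delta_k$ comes from. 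Your remark that the weight is ``essentially pinned'' at $\exp(\pi|\Rea(z)|/\eps_k)$ on the whole band would make this impossible; it is only that large on the thin support of $\nabla\chi$, and the global bound \eqref{item:pasting_upperbound} survives because a unit ball meets at most one such thin strip.

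Two smaller but related points. You cannot use a unit disc in the sub-mean-value step for points of $S^{-\eps_k}+i\tau_k$: the cutoff's transition region lies within distance $3\eps_k/4<1$ of such points and $\alpha$ is only holomorphic off $\supp\nabla\chi$, so the radius must be taken $\eps_k/4$ --- this is where the factor $\eps_k^{-1}$ in $\delta_k$ comes from. And $\delta_k$ is a prescribed formula in the hypotheses, not something ``achieved by choosing $\tau_k$ large compared with $n_k,b_k$''; the $\tau_k$ are given data, and the dependence on $n_k,b_k$ must be absorbed entirely into the free parameters $a_k$, which is how the paper arranges the convergence of $\int|\bar\partial g|^2e^{-u}$.
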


\begin{figure}[htp]
	\centering
	\def\svgwidth{\linewidth}
\begingroup%
\makeatletter%
\providecommand\color[2][]{%
	\errmessage{(Inkscape) Color is used for the text in Inkscape, but the package 'color.sty' is not loaded}%
	\renewcommand\color[2][]{}%
}%
\providecommand\transparent[1]{%
	\errmessage{(Inkscape) Transparency is used (non-zero) for the text in Inkscape, but the package 'transparent.sty' is not loaded}%
	\renewcommand\transparent[1]{}%
}%
\providecommand\rotatebox[2]{#2}%
\newcommand*\fsize{\dimexpr\f@size pt\relax}%
\newcommand*\lineheight[1]{\fontsize{\fsize}{#1\fsize}\selectfont}%
\ifx\svgwidth\undefined%
\setlength{\unitlength}{841.88976378bp}%
\ifx\svgscale\undefined%
\relax%
\else%
\setlength{\unitlength}{\unitlength * \real{\svgscale}}%
\fi%
\else%
\setlength{\unitlength}{\svgwidth}%
\fi%
\global\let\svgwidth\undefined%
\global\let\svgscale\undefined%
\makeatother%
\begin{picture}(1,0.51515152)%
	\lineheight{1}%
	\setlength\tabcolsep{0pt}%
	\put(0,0){\includegraphics[width=\unitlength,page=1]{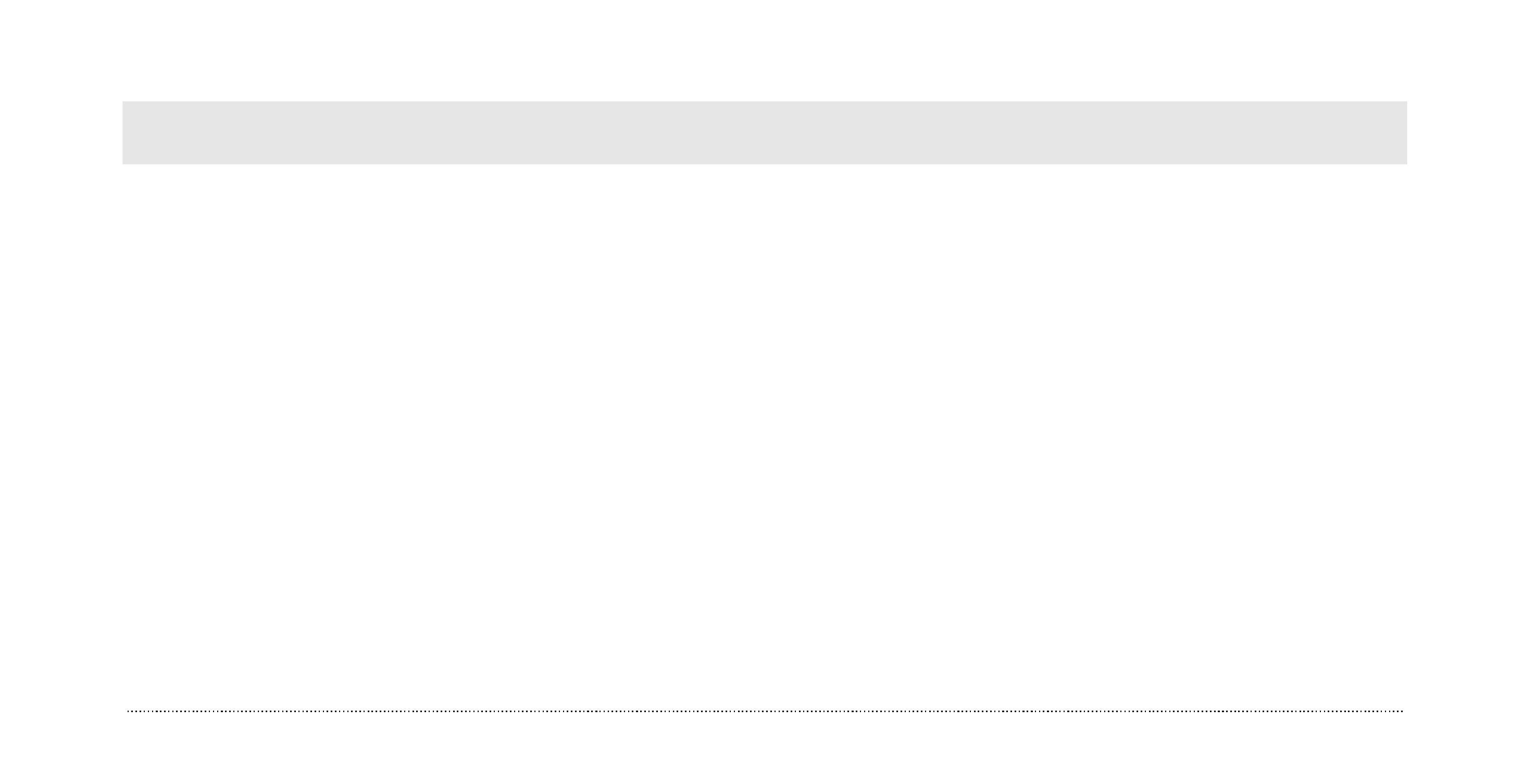}}%
	\put(0.50826347,0.05305313){\color[rgb]{0,0,0}\makebox(0,0)[lt]{\lineheight{1.25}\smash{\begin{tabular}[t]{l}$\fontsize{9pt}{1em}0$\end{tabular}}}}%
	\put(0,0){\includegraphics[width=\unitlength,page=2]{strips_v2.pdf}}%
	\put(0.51484536,0.42513696){\color[rgb]{0,0,0}\makebox(0,0)[lt]{\lineheight{1.25}\smash{\begin{tabular}[t]{l}$\fontsize{9pt}{1em}i\tau_2$\end{tabular}}}}%
	\put(0.7716044,0.42440635){\color[rgb]{0,0,0}\makebox(0,0)[lt]{\lineheight{1.25}\smash{\begin{tabular}[t]{l}$\fontsize{9pt}{1em}S^{-\varepsilon_2}+i\tau_2$\end{tabular}}}}%
	\put(0,0){\includegraphics[width=\unitlength,page=3]{strips_v2.pdf}}%
	\put(0.51559628,0.28556996){\color[rgb]{0,0,0}\makebox(0,0)[lt]{\lineheight{1.25}\smash{\begin{tabular}[t]{l}$\fontsize{9pt}{1em}i\tau_1$\end{tabular}}}}%
	\put(0.78126285,0.28305763){\color[rgb]{0,0,0}\makebox(0,0)[lt]{\lineheight{1.25}\smash{\begin{tabular}[t]{l}$\fontsize{9pt}{1em}S^{-\varepsilon_1}+i\tau_1$\end{tabular}}}}%
	\put(0.80652667,0.35350148){\color[rgb]{0,0,0}\makebox(0,0)[lt]{\lineheight{1.25}\smash{\begin{tabular}[t]{l}$\fontsize{9pt}{1em}W_1$\end{tabular}}}}%
	\put(0.80635942,0.21263556){\color[rgb]{0,0,0}\makebox(0,0)[lt]{\lineheight{1.25}\smash{\begin{tabular}[t]{l}$\fontsize{9pt}{1em}W_0$\end{tabular}}}}%
	\put(0,0){\includegraphics[width=\unitlength,page=4]{strips_v2.pdf}}%
	\put(0.78296729,0.14378447){\color[rgb]{0,0,0}\makebox(0,0)[lt]{\lineheight{1.25}\smash{\begin{tabular}[t]{l}$\fontsize{9pt}{1em}S^{-\varepsilon_0}+i\tau_0$\end{tabular}}}}%
	\put(0,0){\includegraphics[width=\unitlength,page=5]{strips_v2.pdf}}%
	\put(0.80627455,0.4789637){\color[rgb]{0,0,0}\makebox(0,0)[lt]{\lineheight{1.25}\smash{\begin{tabular}[t]{l}$\fontsize{9pt}{1em}W_2$\end{tabular}}}}%
	\put(0.97812036,0.30506502){\color[rgb]{0,0,0}\makebox(0,0)[lt]{\lineheight{1.25}\smash{\begin{tabular}[t]{l}$\fontsize{9pt}{1em}f$\end{tabular}}}}%
	\put(0.08729972,0.1444097){\color[rgb]{0,0,0}\makebox(0,0)[lt]{\lineheight{1.25}\smash{\begin{tabular}[t]{l}$\fontsize{9pt}{1em}h_0(z-i\tau_0)+i\tau_1$\end{tabular}}}}%
	\put(0.08246936,0.35123719){\color[rgb]{0,0,0}\makebox(0,0)[lt]{\lineheight{1.25}\smash{\begin{tabular}[t]{l}$\fontsize{9pt}{1em}h_1(z-i\tau_1)+i\tau_2$\end{tabular}}}}%
	\put(0,0){\includegraphics[width=\unitlength,page=6]{strips_v2.pdf}}%
	\put(0.51373727,0.14629686){\color[rgb]{0,0,0}\makebox(0,0)[lt]{\lineheight{1.25}\smash{\begin{tabular}[t]{l}$\fontsize{9pt}{1em}i\tau_0$\end{tabular}}}}%
	\put(0,0){\includegraphics[width=\unitlength,page=7]{strips_v2.pdf}}%
\end{picture}%
\endgroup%
	\caption{Schematic of the domains and functions in the statement of Theorem \ref{thm:pasting_strips} and Corollary \ref{cor_wd_pasting}. The entire function $f$ will map the domains $S^{-\varepsilon_0}+i\tau_0$ and $W_n$ (in light gray) to a circle around $i \tau_0$, which belongs to an attracting basin, $\mathcal{A}$. The wandering domains $\{U_n\}_{n\geq 1}$ lie in the complement of these domains, and for each $n$, $U_n$ contains $S^{-\eps_n}+i\tau_n$, in dark gray. In white, between $S^{-\eps_n}+i\tau_n$ and $W_n$, the strips where there is no control over $f$ other than an upper bound.}
	\label{fig:idea_pasting}
\end{figure}

The following corollary shows that the function $f$ constructed in Theorem \ref{thm:pasting_strips} has an orbit of unbounded wandering domains, which converge to horizontal strips. Moreover, the translations, $\tau_k$, can be chosen to be large enough, in a precise way, so that the wandering domains are fast escaping.
\begin{cor}\label{cor_wd_pasting} Let $h_k:S\rightarrow S$ be holomorphic maps satisfying that for every $k$, $h_k(S^{-\eps_k})\subset S^{-2\eps_{k+1}}$ and let $f$ be the entire function resulting from Theorem \ref{thm:pasting_strips}. Then, $f$ has an orbit of unbounded escaping wandering domains $\bset{U_k}_{k\geq 1}$ such that for every $k\ge 1$,
\begin{enumerate}
\item \label{item_inclusions}
$S^{-\eps_k}+i\tau_k \subseteq U_k \subseteq S^{+\eps_k}+i\tau_k.$
\item \label{item_fast}
If, in addition, $\tau_k\to \infty$ \emph{fast enough}, namely, if for all $k\geq 1$
\begin{equation}\label{eq_tauk_fast}
	\tau_{k+1}> 4\exp\bb{a_{k-1}\exp\bb{\frac\pi{\eps_{k-1}}\left(\tau_k-\frac{3}{2}\right)}}+\frac32,
\end{equation} then $U_k \subset A(f)$.
\end{enumerate}
\end{cor}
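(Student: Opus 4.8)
### Plan of proof

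The plan is to derive Corollary \ref{cor_wd_pasting} directly from Theorem \ref{thm:pasting_strips}. First I would establish \eqref{item_inclusions}: the hypothesis $h_k(S^{-\eps_k})\subset S^{-2\eps_{k+1}}$ together with property \eqref{item:pasting_f_hk} of Theorem \ref{thm:pasting_strips} shows that for $z\in S^{-\eps_k}+i\tau_k$ the image $f(z)$ lies within $\delta_k$ of $h_k(z-i\tau_k)+i\tau_{k+1}\in S^{-2\eps_{k+1}}+i\tau_{k+1}$; since $\delta_k<\eps_{k+1}$ by \eqref{eq_alphak}, we get $f\bb{S^{-\eps_k}+i\tau_k}\subset S^{-\eps_{k+1}}+i\tau_{k+1}$. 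Hence the forward orbit of $S^{-\eps_1}+i\tau_1$ stays inside the nested family of strips $S^{-\eps_k}+i\tau_k$, which are normal (they avoid, say, a neighbourhood of $i\tau_0$ thanks to property \eqref{item:pasting_f_i} applied on the $W_k$, giving an attracting basin $\mathcal A\ni i\tau_0$ disjoint from all the strips), so each $S^{-\eps_k}+i\tau_k$ lies in a Fatou component $U_k$. For the upper inclusion $U_k\subseteq S^{+\eps_k}+i\tau_k$, I would argue that the strips $W_{k-1}$ and $W_k$, mapped by $f$ into $\mathcal A$ (property \eqref{item:pasting_f_i}), are in $F(f)$ but in a different component from $U_k$, so $U_k$ cannot cross them and is therefore trapped between them; combined with the lower inclusion this pins $U_k$ to lie within $\eps_k$ of $S+i\tau_k$. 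The $U_k$ are pairwise distinct (the $\tau_k$ are strictly increasing and the strips are far apart), hence wandering, and they are escaping since $\tau_k\to\infty$ forces $\Ima f^n\to\infty$ on $U_1$; and each $U_k$ is unbounded because it contains an entire horizontal line's worth of $S^{-\eps_k}+i\tau_k$.

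For part \eqref{item_fast} I would use the standard criterion that it suffices to exhibit, for the orbit $\{U_k\}$, a single point (equivalently the whole component, by \cite[Theorem 1.2]{PG_fastescaping_12}) whose iterates eventually dominate $M_f^n(R)$. Fix a basepoint, say $i\tau_k\in U_k$, and note $f(i\tau_k)$ is within $\delta_k$ of $i\tau_{k+1}$, so $\abs{f^{k}(i\tau_1)}$ is comparable to $\tau_{k+1}$. The key is the bound \eqref{item:pasting_upperbound}: on the half-plane $\Ima(z)\le \tau_{k+1}-3/2$ — which contains the entire strip $S^{+\eps_k}+i\tau_k\supseteq U_k$ once we note $\tau_{k+1}>\tau_k+3/2$ — we have $M$ restricted to that region bounded by $4\exp\bb{a_k\exp\bb{\tfrac{\pi}{\eps_k}\abs{\Rea(z)}}}$. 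Along the orbit, the relevant modulus at step $k$ is at most $\tau_k+\eps_k\le \tau_k$ (essentially $\abs{\Rea}$ stays bounded since $U_k$ is an $\eps_k$-neighbourhood of the line; more carefully, to control $M_f$ on a disk of radius $\abs{f^{n}(i\tau_1)}$ around $0$ one takes the worst case $\abs{\Rea(z)}$ of that size). Thus $M_f\bb{\tau_k}\le 4\exp\bb{a_{k-1}\exp\bb{\tfrac{\pi}{\eps_{k-1}}(\tau_k-3/2)}}$, and hypothesis \eqref{eq_tauk_fast} says precisely that this quantity is $<\tau_{k+1}-3/2<\abs{f^{k}(i\tau_1)}$. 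Iterating, I would show by induction that $\abs{f^{n+\ell}(i\tau_1)}\ge M_f^{n}(R)$ for a suitable fixed $\ell$ and $R$: at each step the true iterate $\abs{f^{k}(i\tau_1)}\approx\tau_{k+1}$ strictly exceeds $M_f$ applied to the previous one, so the sequence $(\tau_k)$ outpaces $(M_f^n(R))$; since $M_f$ is increasing, this ordering propagates. Invoking \cite[Theorem 1.2]{PG_fastescaping_12} once more, $\overline{U_k}\subset A(f)$ for all $k$.

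The main obstacle I expect is the bookkeeping in part \eqref{item_fast}: one must be careful that the upper bound \eqref{item:pasting_upperbound} is being evaluated at the right argument. The subtlety is that $A(f)$ is defined via $M_f^n(R)=\max_{\abs z=\cdot}$, a \emph{global} maximum modulus, whereas \eqref{item:pasting_upperbound} only controls $f$ on a lower half-plane; so one must first check that the global maximum of $\abs f$ on $\{\abs z\le r\}$, for the values of $r$ that occur along the orbit (which are of size $\approx\tau_k$), is actually achieved in the region where \eqref{item:pasting_upperbound} applies, or else supplement with a crude global growth bound — this is presumably why the statement is phrased with the hypothesis $\Ima(z)\le\tau_{k+1}-3/2$ rather than a two-sided strip, and some care (perhaps an auxiliary use of \eqref{item:pasting_f_i}–\eqref{item:pasting_upperbound} on all of $\{\Ima(z)\le\tau_{k+1}-3/2\}$ plus a separate argument for the top) is needed. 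The other, more routine, point is verifying that the $W_k$ genuinely lie in a single attracting basin and that this basin is disjoint from every $U_k$, so that the upper inclusion in \eqref{item_inclusions} really holds; this follows from \eqref{item:pasting_f_i} by a contraction/normality argument near $i\tau_0$, using that $f(i\tau_0)$ is within $\delta_0$ of $i\tau_0$ and $f$ is a small perturbation of the constant $i\tau_0$ on a neighbourhood, so $i\tau_0$ is (close to) an attracting fixed point.
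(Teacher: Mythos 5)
Your proposal is correct and follows essentially the same route as the paper: property (1) plus $\delta_k<\eps_{k+1}$ gives $f(S^{-\eps_k}+i\tau_k)\subset S^{-\eps_{k+1}}+i\tau_{k+1}$ and Montel yields the components $U_k$, trapped between the strips $W_{k-1},W_k$ which map into the attracting basin of $i\tau_0$; for part (2) the paper runs exactly the induction you describe, setting $R_1=\tau_2-\tfrac32$, $R_{k+1}=M_f^k(R_1)$ and showing $R_k\le\tau_{k+1}-\tfrac32$. The "obstacle" you flag resolves just as you suspect: since the whole circle $\abs z=R_{k-1}$ lies in the half-plane $\Ima(z)\le R_{k-1}\le\tau_k-\tfrac32$, property (3) controls the \emph{global} maximum modulus there with $\abs{\Rea(z)}\le R_{k-1}\le\tau_k-\tfrac32$ (so one evaluates the bound at $R_{k-1}$, not at $\tau_k$ as in your sketch), and \eqref{eq_tauk_fast} closes the induction.
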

\begin{proof}[Proof of Corollary \ref{cor_wd_pasting}, using Theorem \ref{thm:pasting_strips}] Note that following property \eqref{item:pasting_f_hk} of $f$, for every $k\ge0$ we have 
	$$
	f\bb{S^{-\eps_k}+i\tau_k}\subset\sbb{h_k\bb{S^{-\eps_k}}+i\tau_{k+1}}^{+\delta_k}.
	$$
	For $k=0$, 
	$$
	f(S^{-\eps_0}+i\tau_0)\subset \sbb{h_0\bb{S^{-\eps_0}}+i\tau_{1}}^{+\delta_0}=B\bb{i\tau_0, \delta_0}\subset S^{-\eps_0}+i\tau_0, 
	$$
	implying, by Montel's theorem, that $S^{-\eps_0}+i\tau_0$ is contained in an unbounded Fatou component, $\mathcal{A}$, which must be an attracting basin. For every $k\ge 1$, we get
	$$
	f\bb{S^{-\eps_k}+i\tau_k}\subset\sbb{h_k\bb{S^{-\eps_k}}+i\tau_{k+1}}^{+\delta_k}\subset S^{-\eps_{k+1}}+i\tau_{k+1},
	$$
	since, by assumption, $h_k(S^{-\eps_k})\subset S^{-2\eps_{k+1}}$, and $\delta_k< \eps_{k+1}$. 
	We conclude that the orbit of every point in $S^{-\eps_k}+i\tau_k$ does not intersect the set $\bset{\Ima(z)<0}$. By Montel's theorem, $S^{-\eps_k}+i\tau_k$ is contained in a Fatou component, $U_k$.
	
	A similar argument shows that, following property \eqref{item:pasting_f_i} of the function $f$, for every $z\in W_k$, $f(z)\in B\bb{i\tau_0,\delta_k}\subset \mathcal{A}$. Denote by $A_k$, $k\geq 1$, the Fatou component containing  $W_k$. Observe that since points in $U_k$ escape to infinity and $A_k$ lies between $U_k$ and $U_{k+1}$, all Fatou components $\{A_k\}, \{U_k\}$ are pairwise disjoint. We conclude that 
	$$
	S^{-\eps_k}+i\tau_k\subseteq U_k\subseteq S^{+\eps_k}+i\tau_k.
	$$
	Lastly, since for all $k\ge 1$, $S^{-\eps_k}+i\tau_k\subseteq U_k$ and 
	$$
	f\bb{S^{-\eps_k}+i\tau_k}\subseteq S^{-\eps_{k+1}}+i\tau_{k+1}\subseteq U_{k+1},
	$$
	we see that $f(U_k)\cap U_{k+1}\neq\emptyset.$ Thus, $\{U_k\}_{k\geq 1}$ is an orbit of unbounded escaping wandering domains, proving part  \eqref{item_inclusions} of the corollary.
	
	To see part \eqref{item_fast}, suppose that $f$ results from a sequence $\bset{\tau_k}\subset \R^+$ such that $\tau_0\ge\frac32$ and for every $k\geq 1$, equation \eqref{eq_tauk_fast} holds. Let $R_1\defeq \tau_2-\frac32.$  Note that $\underset{z\in J(f)}\min\abs z<\tau_1-1/2+\eps_1< R_1$ and so for every $r\ge R_1$ we have $M_f(r)> r$, (since otherwise, by Montel's theorem, we would have $B(0,r)\subset F(f)$). Next, for every $k\geq 1$, define $R_{k+1}\defeq M_f^{k}(R_1)$. We will show by induction that
	\begin{equation*}
		R_{k}\le \tau_{k+1}-\frac32.
	\end{equation*}
	For $k=1$ it holds by the way it was defined. Assuming it holds for $R_{k-1}$, and using property \eqref{item:pasting_upperbound} of $f$ and \eqref{eq_tauk_fast},
	\begin{align*}
	R_{k}=M_f(R_{k-1})&\le 4\exp\bb{a_{k-1}\exp\bb{\frac\pi{\eps_{k-1}} R_{k-1}}} \\ &\le 4\exp\bb{a_{k-1}\exp\bb{\frac\pi{\eps_{k-1}} \left(\tau_{k}-\frac{3}{2}\right)}}\\
	&\le \tau_{k+1}-\frac32.
	\end{align*}

	Let $z\in S^{-\eps_1}+i\tau_1 \subset U_1$. Then, by part (1) of the corollary, for every $k\geq 1$, $f^k(z)\in U_{k+1} \subset S^{+\eps_{k+1}}+i\tau_{k+1},$ implying that 
	$$
	\abs{f^k(z)}\ge \abs{\Ima\bb{f^k(z)}}\ge \tau_{k+1}-\frac12-\eps_{k+1}>R_{k}, 
	$$	
	i.e, $z\in A(f)$, see e.g. \cite[Corollary 4.2]{PG_fastescaping_12}. We see that $U_k\subset A(f)$ is a fast escaping wandering domain for all $k\ge 1$.
\end{proof}

In order to prove Theorem \ref{thm_cases_fast}, we include the classification from  \cite{BEFRS_internal22} of simply connected wandering domains, in terms of convergence to the boundary. 
\begin{thm}[{\cite[Theorem C]{BEFRS_internal22}}]\label{thm_internal}
Let $U$ be a simply connected wandering domain of a transcendental entire function~$f$ and let $U_n$ be the Fatou component containing $f^n(U)$, for $n \ge 0$. Then exactly one of the following holds:
	\begin{enumerate}[label=Case (\alph*),leftmargin=\widthof{[Case~(c)]}+\labelsep]
		\item \label{item_internal_a}
		$\liminf_{n\to\infty} \operatorname{dist}(f^{n}(z),\partial U_{n})>0$  for all $z\in U$,
		that is, all orbits \emph{stay away} from the boundary;
		\item \label{item_internal_b} there exists a subsequence $n_k\to \infty$ for which $\operatorname{dist}(f^{n_k}(z),\partial U_{n_k})\to 0$ for all $z\in U$, while for a different subsequence $m_k\to\infty$ we have that
		\[\liminf_{k \to \infty} \operatorname{dist}(f^{m_k}(z),\partial U_{m_k})>0, \quad\text{for }z\in U;\]
		\item \label{item_internal_c}$ \operatorname{dist}(f^{n}(z),\partial U_{n})\to 0$ for all $z\in U$, that is, all orbits \emph{converge to the boundary}.
	\end{enumerate}
\end{thm}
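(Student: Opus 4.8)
The plan is to reduce the trichotomy to an elementary statement about a single sequence of positive reals, after proving that the asymptotic behaviour, as $n\to\infty$, of $\delta_n(z)\defeq\dist\bb{f^n(z),\partial U_n}$ is independent of the point $z\in U$, up to a multiplicative constant that may depend on $z$ but not on $n$. I would first record two preliminaries: that each $U_n$ is a simply connected hyperbolic domain (forward images of a simply connected Fatou component of an entire function are again simply connected, and $\partial U_n\subset J(f)\neq\emptyset$), and the classical comparison of Euclidean and hyperbolic scales in a simply connected domain, namely that there is a numerical constant $c\ge1$ such that
\[
\frac{1}{c\,\rho_{U_n}(w)}\ \le\ \dist\bb{w,\partial U_n}\ \le\ \frac{c}{\rho_{U_n}(w)}\qquad\text{for every }n\text{ and every }w\in U_n,
\]
where $\rho_{U_n}$ is the density of the hyperbolic metric of $U_n$; the lower bound is the Koebe $\tfrac14$-theorem and the upper bound is the Schwarz--Pick lemma applied to a maximal inscribed disc.

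The \emph{core step}, which I would carry out next, is the uniformity over $U$. Fix a base point $z_0\in U$. Since $f$ maps $U_n$ into $U_{n+1}$, the Schwarz--Pick lemma shows that $f$ does not increase hyperbolic distances, whence by induction $\dist_{U_n}\bb{f^n(z),f^n(z_0)}\le\dist_{U_0}(z,z_0)$ for every $z\in U$ and every $n$. Transporting the Koebe distortion theorem to $U_n$ along a Riemann map $\D\to U_n$ shows that two points of a simply connected domain at hyperbolic distance at most $D$ have hyperbolic densities comparable up to a factor depending only on $D$; applying this with $D=\dist_{U_0}(z,z_0)$ and combining with the comparison above produces a constant $C(z)\ge1$ with
\[
\frac{1}{C(z)}\ \le\ \frac{\delta_n(z)}{\delta_n(z_0)}\ \le\ C(z)\qquad\text{for all }n.
\]
In particular, along any subsequence $(n_k)$, the property ``$\delta_{n_k}(z)\to0$'' holds for some $z\in U$ if and only if it holds for every $z\in U$, and the same is true of the property ``$\liminf_k\delta_{n_k}(z)>0$''.

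It then remains to dissect the single positive sequence $s_n\defeq\delta_n(z_0)$. If $\liminf_n s_n>0$, we are in Case~(a). Otherwise $\liminf_n s_n=0$, so a subsequence of $(s_n)$ tends to $0$; if moreover $s_n\to0$ we are in Case~(c), while if not then $\limsup_n s_n>0$, and a subsequence realising this $\limsup$ has strictly positive $\liminf$, so both conditions of Case~(b) are satisfied. By the previous step, each of these properties of $(s_n)$ transfers to $\bb{\delta_n(z)}_n$ for every $z\in U$, and since the three cases are pairwise incompatible by construction, this establishes the classification.

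The step I expect to be the \emph{main obstacle} is the core step: one must make certain that the distortion factor $C(z)$ depends only on the hyperbolic distance from $f^n(z)$ to $f^n(z_0)$ in $U_n$ and not on $n$ --- which is precisely what the pullback of the Koebe distortion theorem along the Riemann maps $\D\to U_n$ delivers, using the uniform bound $\dist_{U_n}\bb{f^n(z),f^n(z_0)}\le\dist_{U_0}(z,z_0)$ --- and this is also where the hypothesis of simple connectivity of the wandering domain is indispensable. The remaining ingredients are standard estimates from hyperbolic geometry and an elementary analysis of real sequences.
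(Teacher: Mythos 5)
This theorem is not proved in the paper at all: it is quoted verbatim as \cite[Theorem~C]{BEFRS_internal22}, so there is no in-paper argument to compare against. Your proof is correct and, for what it is worth, follows essentially the same route as the original source: the key point there is likewise that $\dist(f^n(z),\partial U_n)$ and $\dist(f^n(z_0),\partial U_n)$ are comparable uniformly in $n$, obtained from the Schwarz--Pick contraction $\dist_{U_n}(f^n(z),f^n(z_0))\le \dist_{U_0}(z,z_0)$ together with the standard two-sided comparison between $\dist(w,\partial U_n)$ and $1/\rho_{U_n}(w)$ in a simply connected domain, after which the trichotomy reduces to the elementary $\liminf$/$\limsup$ analysis of a single positive sequence exactly as you describe.
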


\begin{proof}[Proof of Theorem \ref{thm_cases_fast}]
Let us fix for the rest of the proof the sequence $\eps_k\defeq 8^{-k}$, $k\geq 1$, noting that $\sum_{k=1}^{\infty} \eps_k = 1/7.$ Moreover, given sequences of maps $\{h_k\}$ and translations $\{\tau_k\}$ satisfying the requirements of Theorem \ref{thm:pasting_strips}, for each $n\geq 1$, let $H_n\colon S+i\tau_1\to S+i\tau_{n+1}$ given by
\begin{equation*}
H_n(z)\defeq h_n \circ h_{n-1} \circ\cdots \circ h_1(z-i \tau_1)+i \tau_{n+1}.
\end{equation*}

Next we construct examples of wandering domains for the three cases in Theorem~\ref{thm_internal}. We first prove (a), then (c) and finally (b), which is straightforward combining the arguments used in (a) and (c).
\begin{itemize}[leftmargin=\widthof{[Case~(c)]}+\labelsep]
\item [Case (a)]Suppose first that $h_k(z)\defeq z$ for every $k\geq 1$. Since $2\eps_{k+1}<\eps_{k},$ $h_k(S^{-\eps_k})=S^{-\eps_k}\subset S^{-2\eps_{k+1}}$, and $H_k(z)=z-i\tau_1+i\tau_{k+1}$. Let $\{\tau_k\}$ be a sequence satisfying the requirements of Theorem \ref{thm:pasting_strips}, as well as \eqref{eq_tauk_fast} in Corollary \ref{cor_wd_pasting}. Let $f$ be the entire function resulting from Theorem~\ref{thm:pasting_strips}, with this choice of parameters. Moreover, following Corollary \ref{cor_wd_pasting}, let $\{U_n\}_{n\geq 1}$ be the corresponding orbit of unbounded fast escaping wandering domains. 

We want to show that \ref{item_internal_a}  in Theorem~\ref{thm_internal} holds for all $z\in U_1$. Note that by the trichotomy in the statement of  Theorem~\ref{thm_internal}, it suffices to show that \ref{item_internal_a} holds for one specific point in $U_1$, as then it must hold for all points in $U_1$. Let us choose $z\defeq i \tau_1\in U_1$

Since the maps $h_k$ do not increase Euclidean distances, that is, $\vert h_k(w_1)-h_k(w_2)\vert \leq \vert w_1-w_2\vert$ for all $k\geq 1$, $w_1, w_2\in S$, an inductive argument shows that for all $n\geq 1$,
\begin{align*}
	\vert f^n(z)-H_n(z)\vert & \leq \vert f(f^{n-1}(z))-(h_{n}(f^{n-1}(z)-i\tau_{n})+i\tau_{n+1})\vert+\\
	& \quad + \vert h_{n}(f^{n-1}(z)-i\tau_{n})+i\tau_{n+1} -( h_n(H_{n-1}(z)-i\tau_{n})+i\tau_{n+1})\vert \\
	& \leq  \delta_n+ \vert f^{n-1}(z)-H_{n-1}(z)\vert \leq  \sum^n_{k=1} \delta_k \leq \sum^n_{k=1} \eps_{k+1}\leq\frac{1}{56},
\end{align*}
where we have used \eqref{eq_alphak} and property \eqref{item:f_small_h} of $f$. Since, by Corollary \ref{cor_wd_pasting}, $S^{-\eps_n}+i\tau_n \in U_n$,  we have
\begin{equation*}
	\dist(f^n(z),\partial U_{n+1}) \geq \dist(H_n(z),\partial U_{n+1})-\frac{1}{56} \geq \frac{1}{2}-\eps_n-\frac{1}{56}>\frac{1}{5}>0,
\end{equation*} 
as desired.
\item [Case (c)] Next, for every $k\geq 1$, define $h_k(z)\defeq \frac{i}{2}-3\eps_{k+1}$, so that $h_k(S^{-\eps_k})=\frac{i}{2}-3\eps_{k+1}\subset S^{-2\eps_{k+1}}$ and $H_k(z)=h_k(z)+i\tau_{k+1}$ in $S^{-\eps_1}$. We can again choose a sequence $\{\tau_k\}$ satisfying the requirements of Theorem~\ref{thm:pasting_strips}, as well as \eqref{eq_tauk_fast} in Corollary \ref{cor_wd_pasting},  to obtain an entire function $f$ with an orbit $\{U_n\}_{n\geq 1}$ of fast escaping unbounded wandering domains.

Observe that this time, since $h_k(z)$ is a constant function for every $k$, instead of to a sum of errors, property  \eqref{item:f_small_h} of $f$ leads to 
$$\vert f^n(z)-H_n(z)\vert\leq \delta_n\leq \eps_n \quad \text{ for every } n\geq 1.$$

To see that \ref{item_internal_c}  in Theorem~\ref{thm_internal} holds, choose $z\defeq i \tau_1\in U_1$. Since $i/2\in \partial S$, and by Corollary \ref{cor_wd_pasting}, $U_n \subseteq S^{+\eps_n}+i\tau_n$, we have that 
\begin{equation*}
	\dist(f^n(z),\partial U_{n+1})\leq \dist(f^n(z), H_n(z))+  \dist(H_n(z),\partial U_{n+1})\leq \eps_n+4\eps_n=5\eps_n,
\end{equation*} 
which tends to $0$ as $n\to \infty$, as we wanted to show.
\item [Case (b)] Finally, a wandering domain satisfying \ref{item_internal_b} can be constructed using the sequence of maps
\begin{equation*}
	h_k(z)\defeq \begin{cases}
		\frac{i}{2}-3\eps_{k},& k=2m+1 \text{ for some }m\in \N,\\
		0,& \text{otherwise}.
	\end{cases}
\end{equation*}
Indeed, let $f$ and $\{U_n\}_{n\geq 1}$ be constructed as before, and choose $z\defeq i \tau_1 \in U_1$. Then, arguing as in our first example, one can show that $\dist(f^{2n}(z), \partial U_{2n+1})$ is uniformly bounded away from zero, while $\dist(f^{2n+1}(z), \partial U_{2n+2})\to 0$ as $n\to \infty$, arguing this time as in our second example. \qedhere
\end{itemize}
\end{proof}

\begin{observation}\label{obs:contracting}In \cite[Theorem A]{BEFRS_internal22}, wandering domains are classified into \textit{contracting}, \textit{semi-contracting} and \textit{eventually isometric} in terms of hyperbolic distances. Our method can be used to construct unbounded fast escaping wandering domains of contracting type with any of the three behaviours described in Theorem \ref{thm_internal}. A simply connected wandering domain $U_1$ is contracting if $\dist_{U_{n+1}}(f^n(z), f^n(w))\to 0$ for all $z,w,\in U_1$. In our setting, it suffices to show that if $T_{n+1}\defeq S^{-\eps_{n+1}}+i\tau_{n+1}$, then  $\dist_{T_{n+1}}(f^n(z), f^n(w))\to 0$ for some $z,w\in U_1$. In case (c) in the previous proof, if $\tau_n$ is chosen large enough such that $\frac{\delta_n}{2\eps_{n+1}}\to 0$, then since $h_n(z)=\frac{i}{2}-3\eps_{n+1}$ we have
\begin{eqnarray*}
\dist_{T_{n+1}}(f^n(z), f^n(w)) &\leq& \dist_{T_{n+1}}(f^n(z),H_n(z))+\dist_{T_{n+1}}(H_n(z), H_n(w))\\
& & \quad +\dist_{T_{n+1}}(H_n(w), f^n(w))\nonumber \\
&\leq& 2\log\left(1+ \frac{|f^n(z)-H_n(z)|}{\min \{\dist(f^n(z),\partial T_{n+1}),\dist(H^n(z),\partial T_{n+1})\}}\right) \nonumber \\ & & \quad + 2\log\left(1+ \frac{|f^n(w)-H_n(w)|}{\min \{\dist(f^n(w),\partial T_{n+1}),\dist(H_n(w),\partial T_{n+1})\}}\right)\nonumber \\
&\leq&  4\log \left(1+ \frac{\delta_n}{2\eps_{n+1}}\right) \to 0, \nonumber
\end{eqnarray*}
for $n \to \infty$, where the second inequality is deduced by \cite[p.157]{shapiro}. 
Similarly, case~(b) in the previous proof gives a contracting wandering domain whenever parameters are chosen so that $\frac{\delta_k}{2\eps_{k+1}}\to 0$. To obtain a contracting wandering domain where orbits stay away from the boundary, we can use the sequence $h_k(z)=0$ for all $k\geq 1$ (instead of the sequence $h_k(z)=z$ that was used in case (a) above).

Our method only provides an approximation of our function  on an unbounded subset strictly contained in the wandering domain. This prevents us from constructing examples of eventually isometric type. Also, for the semi-contracting case one would need an approach similar to the above, and in particular, the sum of errors (see top of p.8) to shrink. In our case, this can only be achieved using contracting model maps, which would then result to a contracting wandering domain.
\end{observation}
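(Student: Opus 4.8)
The plan is to read off the contracting property from the estimates already produced for the orbit $\{U_n\}_{n\ge1}$ of $U_1$, using only monotonicity of the hyperbolic metric under inclusion together with the standard comparison estimate for hyperbolic distance between nearby points. Fix the two points $z\defeq i\tau_1$ and $w\defeq\tfrac12+i\tau_1$, both lying in $S^{-\eps_1}+i\tau_1\subseteq U_1$. Set $T_{n+1}\defeq S^{-\eps_{n+1}}+i\tau_{n+1}$, so that $T_{n+1}\subseteq U_{n+1}$ by part~\eqref{item_inclusions} of Corollary~\ref{cor_wd_pasting}; monotonicity then gives $\dist_{U_{n+1}}(f^n(z),f^n(w))\le\dist_{T_{n+1}}(f^n(z),f^n(w))$. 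Since the contracting/semi-contracting/eventually isometric trichotomy of \cite[Theorem~A]{BEFRS_internal22} is a property of the orbit rather than of the chosen pair of points (exactly as the trichotomy of Theorem~\ref{thm_internal} was used in the proof of Theorem~\ref{thm_cases_fast}), it is enough to show $\dist_{T_{n+1}}(f^n(z),f^n(w))\to0$.

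The next step is to compare $f^n$ with the model compositions $H_n$ from the proof of Theorem~\ref{thm_cases_fast}. For each of the three contracting constructions — Case~(c), Case~(b), and the variant of Case~(a) with $h_k\equiv0$ in place of $h_k(z)=z$ — every $h_k$ is a \emph{constant} map, hence $H_n$ is constant and $H_n(z)=H_n(w)$. Since $f^{n-1}(z),f^{n-1}(w)\in S^{-\eps_n}+i\tau_n$ (as in the proof of Corollary~\ref{cor_wd_pasting}, using $h_k(S^{-\eps_k})\subset S^{-2\eps_{k+1}}$), property~\eqref{item:pasting_f_hk} of $f$ applies at step $n$, and because $H_n$ is constant there is no accumulation of errors: $\abs{f^n(z)-H_n(z)}\le\delta_n$ and likewise for $w$. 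Applying the triangle inequality in $T_{n+1}$, the middle term $\dist_{T_{n+1}}(H_n(z),H_n(w))$ vanishes, and the two remaining terms are bounded via the standard comparison estimate (see \cite[p.~157]{shapiro}):
\[
\dist_{T_{n+1}}(p,q)\le 2\log\bb{1+\frac{\abs{p-q}}{\min\{\dist(p,\partial T_{n+1}),\,\dist(q,\partial T_{n+1})\}}}.
\]

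It then remains to bound the Euclidean distances to $\partial T_{n+1}$ from below. In the stay-away construction ($h_k\equiv0$) one has $H_n(z)=i\tau_{n+1}$, so $\dist(H_n(z),\partial T_{n+1})\ge\tfrac12-\eps_{n+1}>\tfrac14$ and, since $\delta_n<\eps_{n+1}$, also $\dist(f^n(z),\partial T_{n+1})>\tfrac14$; as $\delta_n\to0$ by~\eqref{eq_alphak}, one already gets $\dist_{T_{n+1}}(f^n(z),f^n(w))\le 4\log\bb{1+4\delta_n}\to0$ with no extra condition on $\{\tau_k\}$, and the same lower bound shows the orbit stays away from $\partial U_n$, giving Case~(a). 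In Cases~(b) and~(c), where on the relevant steps $h_k(z)$ equals $\tfrac{i}{2}-3\eps_{k+1}$ (Case~(c)), resp.\ $\tfrac{i}{2}-3\eps_k$ (Case~(b)), the point $H_n(z)$ lies at Euclidean distance $\Theta(\eps_{n+1})$ from $\partial T_{n+1}$ on those steps and at distance $>\tfrac14$ otherwise, so $\dist(f^n(z),\partial T_{n+1})\gtrsim\eps_{n+1}$ and the estimate reads $\dist_{T_{n+1}}(f^n(z),f^n(w))\le 4\log\bb{1+\delta_n/\bb{2\eps_{n+1}}}$. Choosing $\{\tau_k\}$ so that, on top of hypotheses (i)--(iii) of Theorem~\ref{thm:pasting_strips} and inequality~\eqref{eq_tauk_fast}, one also has $\delta_k/\eps_{k+1}\to0$ — possible because $\delta_k=3/(\eps_k\tau_k^2)$ decreases as $\tau_k$ grows — forces the right-hand side to $0$. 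This yields contracting unbounded fast escaping wandering domains realising each of the behaviours of Theorem~\ref{thm_internal}; eventually isometric and semi-contracting examples remain out of reach, since the former needs control of $f$ on all of $U_n$ and the latter a genuinely shrinking error sum, which in this scheme can only come from contracting model maps and hence again yields a contracting domain.

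The step I expect to be the main obstacle is not any individual estimate but checking that the new requirement $\delta_k/\eps_{k+1}\to0$ is compatible with everything else — the fast-escaping growth condition~\eqref{eq_tauk_fast} and hypotheses (i)--(iii) of Theorem~\ref{thm:pasting_strips}. This turns out to be harmless, since all of these are lower bounds on the $\tau_k$ and none of $a_k,\eps_k,\delta_k$ depends on any $\tau_j$ with $j>k$, so the $\tau_k$ may be fixed one at a time; but it is the point that must be spelled out to legitimise the clause ``whenever parameters are chosen so that $\delta_k/(2\eps_{k+1})\to0$''.
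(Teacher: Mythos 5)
Your argument is correct and follows essentially the same route as the paper's: the same triangle-inequality decomposition through the constant model composition $H_n$ (whose middle term vanishes and whose constancy prevents error accumulation), the same comparison estimate from \cite[p.~157]{shapiro} applied in $T_{n+1}\subseteq U_{n+1}$, and the same final bound $4\log\bb{1+\delta_n/(2\eps_{n+1})}\to 0$ under the extra condition $\delta_k/\eps_{k+1}\to 0$. Your additional remarks — that the stay-away case $h_k\equiv 0$ needs no extra condition on $\{\tau_k\}$ because the boundary distance is bounded below by an absolute constant, and that the new requirement is compatible with \eqref{eq_tauk_fast} since all constraints are lower bounds on $\tau_k$ chosen inductively — are worthwhile elaborations of points the paper leaves implicit.
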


\begin{proof}[Proof of Theorem \ref{thm_fast_escaping}] It is a direct consequence of Theorem \ref{thm_cases_fast}.
\end{proof}

The rest of this section is devoted to proving Theorem \ref{thm:pasting_strips}. To be able to define the entire function $f$, that approximates the different maps, $h_k$, in disjoint regions, we will use H\"ormander's theorem on the solution to a $\bar\partial$-equation. %; see \cite[Theorem~4.2.1]{Hormander}. 
In what follows, subharmonic functions play an important role. For a definition and basic properties see \cite[Chapter~2]{hayman_subharmonic}.
\begin{Hormander}[{\cite[Theorem~4.2.1]{Hormander}}]\label{thm:Hormander}
	Let $u:\C\rightarrow\R$ be a subharmonic function. Then, for every locally integrable function $g$ there is a solution $\alpha$ of the equation $\bar\partial \alpha=g$ such that
	\begin{equation}\label{eq:Hormander}
		\int_\C\abs {\alpha(z)}^2\frac{e^{-u(z)}}{\bb{1+\abs z^2}^{2}}dm(z)\le\frac12\int_\C\abs {g(z)}^2e^{-u(z)}dm(z),
	\end{equation}
	provided that the integral on the right hand side is finite.
\end{Hormander}

Roughly speaking, the proof of Theorem \ref{thm:pasting_strips} will proceed as follows: the entire function $f$ will aim to approximate a \textit{model map} $h$ that is equal to $h_k$ on most of a translated copy of S, $S+i\tau_k,$ and to $i\tau_0$ on $W_k$. Our entire map will then be $f=h-\alpha$, where $\alpha$ results from applying Hörmander's theorem to the equation $g= \overline\partial h$ and to a subharmonic function $u$, carefully chosen. In particular, $f$ will be entire, as
\begin{equation}\label{eq_f_entire}
\overline\partial f(z)=\overline\partial\bb{ h(z)-\alpha(z)}=\overline\partial h(z)-\overline\partial\alpha(z)=\overline\partial h(z)-g(z)=0.
\end{equation}
More precisely, the function $u$ interacts with $f$ in the following way.
On one hand, we would like to choose $u$ as negative as possible where we want a good approximation of the model map by $f$ (Lemma \ref{lem:sh_stripes} \ref{item_u_pasting_2}\ref{item_u_pasting_22}). Moreover, anywhere in the plane, $|f|$ will be bounded roughly by $\exp \bb{u}$ and so one would want to make $u$ as small as possible to have a minimal possible bound on $|f|$ (Lemma \ref{lem:sh_stripes}\ref{item_u_pasting_1}). On the other hand, in order for the integral on the right hand side in H\"ormander's theorem to converge, $u$ will need to be large in the support of $g$ (Lemma \ref{lem:sh_stripes} \ref{item_u_pasting_2}\ref{item_u_pasting_21}). We start by constructing the subharmonic function $u$:

\begin{lemma}\label{lem:sh_stripes}
Given a sequence $\tau_k \nearrow\infty$ with $\tau_0\ge \frac32$ and $\tau_{k+1}\ge\tau_{k}+1+\eps_k+\eps_{k+1}$, a sequence of weights $a_k\nearrow \infty$ with $a_0\ge10\cdot \tau_0$, and a sequence $\eps_k \searrow 0$, there exists a subharmonic function $u\colon \C\to \R$ so that
	\begin{enumerate}[label=(\arabic*)]
		\item \label{item_u_pasting_1}If $\Ima(z)\le\tau_{k+1}-\frac12$, then
		$$
		u(z)\le 2a_k\exp\bb{\frac{\pi}{\eps_k}\abs{\Rea(z)}};
		$$
		\item \label{item_u_pasting_2} For every $z$ with $\Ima(z)>\tau_0-\frac12+\frac{\eps_0}{4},$
		\begin{enumerate}[label=(\roman*)]
			\item \label{item_u_pasting_21} If $z\in \bb{S^{-\frac{\eps_k}4}\setminus S^{-\frac{3\eps_k}4}}+i \tau_k$ for some $k\geq 0$, then
			$$
			u(z)\ge \frac{a_k}3\exp\bb{\frac{\pi}{\eps_k}\vert\Rea(z)\vert}-8\log\abs z;
			$$
			\item \label{item_u_pasting_22} If $z\nin \bunion k 0 \infty\bb{S\setminus S^{-\eps_k}}+i \tau_k$, then
			$$
			u(z)= -8\log\abs z.
			$$
		\end{enumerate}
	\end{enumerate}
\end{lemma}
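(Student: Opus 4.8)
The plan is to build $u$ as a sum (or maximum) of three contributions, each handling one of the regimes in the statement. First I would define, for each $k\ge 0$, a ``bump'' subharmonic function $v_k$ supported near the translated strip boundary $\bigl(S\setminus S^{-\eps_k}\bigr)+i\tau_k$ that is large (of size $\sim a_k\exp(\tfrac{\pi}{\eps_k}|\Rea(z)|)$) precisely on the annular region $\bigl(S^{-\eps_k/4}\setminus S^{-3\eps_k/4}\bigr)+i\tau_k$ and essentially zero away from $\bigl(S\setminus S^{-\eps_k}\bigr)+i\tau_k$. The natural candidate is a rescaled and cut-off version of a function like $\Rea\bigl(\cos(\tfrac{\pi}{\eps_k}(z-i\tau_k))\bigr)$ or $\bigl|\cos(\tfrac{\pi}{\eps_k}(\cdot))\bigr|$-type expression, composed with the conformal coordinate of the strip: the point is that a function of the form $\exp(\tfrac{\pi}{\eps_k}|\Rea(z)|)$ times an oscillatory factor in $\Ima(z)$ is (sub)harmonic, is exponentially large in $|\Rea(z)|$, and can be arranged to vanish to the desired order at $\Ima(z)=\tau_k\pm(1/2-\eps_k)$ while being positive on the inner annulus. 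One then truncates $v_k$ to zero outside a neighbourhood of the strip boundary (taking a maximum with $0$, or multiplying by a fixed smooth cut-off and correcting), which keeps it subharmonic since the maximum of subharmonic functions is subharmonic and $v_k$ is already $\le 0$ where we want to cut. Then I would set $u(z)\defeq -8\log|z| + \sum_{k\ge 0} v_k(z)$, or rather $u(z)\defeq\max\bigl(-8\log|z|,\ \sup_k(v_k(z)-8\log|z|)\bigr)$ so that $-8\log|z|$ is genuinely the value off all the strip-boundary regions; one must check the sum/sup is locally finite, which follows because the supports of the $v_k$ are disjoint horizontal bands (guaranteed by $\tau_{k+1}\ge\tau_k+1+\eps_k+\eps_{k+1}$, so the $\eps_k$-neighbourhoods of consecutive strips do not overlap).

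With $u$ so defined, verifying the three claims is then a matter of bookkeeping. Claim \ref{item_u_pasting_22} is immediate: if $z\notin\bigcup_k\bigl(S\setminus S^{-\eps_k}\bigr)+i\tau_k$ then every $v_k(z)\le 0$ and in fact the construction gives $v_k(z)$ small enough (or $=0$ after truncation) that $u(z)=-8\log|z|$. Claim \ref{item_u_pasting_21} follows because on the inner annulus $\bigl(S^{-\eps_k/4}\setminus S^{-3\eps_k/4}\bigr)+i\tau_k$ we have arranged $v_k(z)\ge \tfrac{a_k}{3}\exp(\tfrac{\pi}{\eps_k}|\Rea(z)|)$, hence $u(z)\ge v_k(z)-8\log|z|$ gives the bound. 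Claim \ref{item_u_pasting_1} is the upper bound: for $\Ima(z)\le\tau_{k+1}-\tfrac12$, only the bumps $v_0,\dots,v_k$ (and possibly the bottom fringe of $v_{k+1}$, which is why the hypothesis stops just below $\tau_{k+1}-\tfrac12$) can be nonzero at $z$; since the supports are disjoint, at most one $v_j$ with $j\le k$ is nonzero at $z$, and each is bounded by $\sim a_j\exp(\tfrac{\pi}{\eps_j}|\Rea(z)|)\le a_k\exp(\tfrac{\pi}{\eps_k}|\Rea(z)|)$ using monotonicity $a_j\le a_k$ and $\eps_j\ge\eps_k$ (so $\tfrac{\pi}{\eps_j}\le\tfrac{\pi}{\eps_k}$); absorbing the $-8\log|z|$ term (negative for $|z|$ large, and controlled by the constant $a_0\ge 10\tau_0$ near the origin) and the multiplicative constant into the factor $2$ yields $u(z)\le 2a_k\exp(\tfrac{\pi}{\eps_k}|\Rea(z)|)$.

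The subharmonicity of $u$ is the structural point to get right, but it is not hard given the recipe: $-8\log|z|$ is subharmonic on $\C$ (harmonic off the origin, $-\infty$ at $0$, which is fine — subharmonic functions may take $-\infty$), each $v_k$ is subharmonic by construction (a max of a genuinely harmonic exponential-type function with $0$, suitably localised), and a locally finite sup of subharmonic functions that is locally bounded above is subharmonic after upper-semicontinuous regularisation — here local finiteness of the family and the uniform local bounds coming from the band-disjointness make this automatic without regularisation. I expect the main obstacle to be purely the \emph{explicit engineering of the bump} $v_k$: one needs a single elementary expression that is subharmonic, grows like $\exp(\tfrac{\pi}{\eps_k}|\Rea(z)|)$ in the real direction, is supported (after truncation) in the $\eps_k$-fringe of the strip $S+i\tau_k$, and whose sign and size on the three nested sub-annuli $S^{-3\eps_k/4}$, $S^{-\eps_k/4}$, $S$ come out with exactly the constants $\tfrac{a_k}{3}$ and $2a_k$ demanded. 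This is a one-variable Phragmén–Lindström-style construction (work in the coordinate $w=\tfrac{\pi}{\eps_k}(z-i\tau_k)$, where the strip becomes a fixed strip and $\Rea(e^{w})$ or $\cosh$-type functions do the job), and the constants are arranged by choosing the cut-off thresholds and rescaling; the factors $3$ and $2$ leave ample slack. Once the bump is fixed, the rest is the disjoint-support bookkeeping described above.
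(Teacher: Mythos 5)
Your bump construction is essentially the one the paper uses: a Phragmén--Lindelöf-type harmonic function $\cosh\bb{\frac{\pi}{\eps_k}x}\cos\bb{\frac{\pi}{\eps_k}y}$, translated to the two $\eps_k$-fringes of $S+i\tau_k$, extended by $0$ across the lines where the cosine factor vanishes (a local maximum of subharmonic functions), weighted by $a_k$ and summed over the disjointly supported bands; the constants $\frac{a_k}{3}$ and $2a_k$ then fall out of $\frac12 e^{x}<\cosh(x)\le 2e^{x}$ and the monotonicity of $\bset{a_k}$ and $\bset{\eps_k}$, exactly as you describe.

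There is, however, one genuine gap: your treatment of the term $-8\log\abs z$. You assert it is subharmonic on $\C$ because it is ``$-\infty$ at $0$''; in fact $-8\log\abs z\to+\infty$ as $z\to 0$, so it is \emph{superharmonic} with a positive pole at the origin, and no upper semicontinuous choice of value at $0$ makes it subharmonic there. Consequently the global definition $u(z)=-8\log\abs z+\sum_k v_k(z)$ (or your equivalent max formulation) fails in two ways near the origin: $u$ is not subharmonic on any neighbourhood of $0$, and property (1) is violated, since for $\Ima(z)\le\tau_1-\frac12$ and $z\to 0$ your $u(z)\to+\infty$ while the required bound $2a_0\exp\bb{\frac{\pi}{\eps_0}\abs{\Rea(z)}}$ stays finite. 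The missing idea is that $-8\log\abs z$ must only be switched on in the half-plane $\bset{\Ima(z)>\tau_0-\frac12}$, where $\abs z\ge 1$ so that it is harmonic and nonpositive, with $u\equiv 0$ below that line; the two pieces are glued by taking $\max\bset{0,\,a_0u_0(z)-8\log\abs z}$ on the thin transition band $\Ima(z)-\tau_0+\frac12\in\bb{0,\frac{\eps_0}4}$. This is precisely where the hypothesis $a_0\ge 10\tau_0$ is used (to force $a_0u_0(z)-8\log\abs z>0$ at the top of the band, so the maximum matches the formula above it) and why property (2) in the statement is only asserted for $\Ima(z)>\tau_0-\frac12+\frac{\eps_0}4$. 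Your proposal never uses $a_0\ge 10\tau_0$ for this purpose, which is a sign the gluing step is absent rather than merely implicit.
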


\begin{proof}
For every $\eps\in\bb{0,1}$, define $v_\eps\colon \C \to \R$ by
	$$
	v_\eps(z)=v_\eps(x+iy)\defeq%\cosh\bb{\frac\pi\eps x}\cdot\cos\bb{\frac\pi\eps y}\indic{\abs y<\frac\eps2}(z)
		\begin{cases}
		\cosh\bb{\frac\pi\eps x}\cdot\cos\bb{\frac\pi\eps y}, & \abs y<\frac\eps2,\\
		0,& \text{otherwise}.
	\end{cases}
	$$
	This function is subharmonic as a local maximum of subharmonic functions. (Note that a continuous function $u$ is subharmonic if and only if it satisfies the mean value inequality and the later is preserved under taking maximum). If $\abs y<\frac\eps 4$, then, by monotonicity of the cosine function,
	\begin{equation*}\label{eq_v_eps}
	v_\eps(z)\ge\cosh\bb{\frac\pi\eps x}\cdot\cos\bb{\frac\pi\eps \cdot\frac\eps 4}\ge  \frac1{\sqrt2}\cosh\bb{\frac\pi\eps x}\ge \frac{e^{\frac{\pi}\eps x}}3,
	\end{equation*}
	%\ge \frac{e^{\frac{\pi}\eps x}}{2\sqrt2}
	where we have used that 
	\begin{equation}\label{eq_cosh}
	\frac{\exp(x)}{2}<\cosh(x)\leq 2 \exp(x) \quad \text{ for all } x>0.
	\end{equation}	
	
	For each $k\geq 0$, let $\omega_k\defeq\frac{1-\eps_k}2$ and define $u_k\colon \C \to \R$ as
	$$
	u_k(z)\defeq v_{\eps_k}\bb{z-i\cdot\bb{\tau_k-\omega_k}}+v_{\eps_k}\bb{z-i\cdot\bb{\tau_k+\omega_k}},
	$$
	noting that it is supported in $(S\setminus S^{-\eps_k})+i\tau_k$.
	
	Since the map $z\mapsto-8\log\abs z$ is not well defined at the origin, we will `attach it' to $u_0$ by using the fact that the local maximum of subharmonic functions is a subharmonic function. We define the function
	$$
	\tilde{u}_0(z)\defeq \begin{cases}
		0,& \Ima(z)\le\tau_0-\frac12,\\
		\max\bset{0,a_0\cdot u_0(z)-8\log\abs z},&  \Ima(z)-\tau_0+\frac12\in\bb{0,\frac{\eps_0}4},\\
		a_0\cdot u_0(z)-8\log\abs z,& \text{otherwise}.
	\end{cases}
	$$
	In each of the domains the function $\tilde{u}_0$ is subharmonic. It is left to see it is well defined on their boundaries. Note that if $\Ima(z)= \tau_0-\frac12$, then
	$$
	\abs{\Ima(z-i\bb{\tau_0-\omega_0})}=\tau_0-\omega_0-\Ima(z)= \tau_0-\omega_0-\bb{\tau_0-\frac12}=\frac{\eps_0}2,
	$$
	implying that $v_{\eps_0}\bb{z-i\bb{\tau_0-\omega_0}}=0$. On the other hand, $\abs z\ge\abs{\Ima(z)}=\tau_0-\frac12\ge 1$ and therefore
$$
		a_0\cdot  v_{\eps_0}\bb{z-i\bb{\tau_0-\omega_0}}-8\log\abs z=-8\log\abs z\le0.
$$
	If $\Ima(z)=\tau_0-\frac12+\frac{\eps_0}4$, then since $a_0>10\cdot\tau_0$,
	\begin{align*}
		a_0\cdot v_{\eps_0}\bb{z-i\bb{\tau_0-\omega_0}}-8\log\abs z&\ge a_0\cdot \cosh\bb{\frac\pi{\eps_0} x}\cos\bb{\frac\pi4}-4\log\bb{x^2+\tau_0^2}\\
		=& \frac{a_0}{\sqrt 2}\cdot \cosh\bb{\frac\pi{\eps_0} x}-4\log\bb{x^2+\tau_0^2}>0.
	\end{align*}
	%by the lower bound we have on $\leti{a_0}$.
	Thus, $\tilde{u}_0$ is well defined and subharmonic as a local maximum of subharmonic functions.
	
	Define the function $u\colon \C\to \R$ as
	$$
	u(z)\defeq \tilde{u}_0(z)+\sum_{k=1}^\infty a_k\cdot u_k(z), 
	$$
To see Property \ref{item_u_pasting_1} holds, note that if  $\Ima(z)\le\tau_{k+1}-\frac12$, then for every $\nu\ge k+1$, $z$ does not intersect the support of $u_\nu$. On the other hand, since the strips are disjoint, there exists at most one index, $0\leq \nu_0\le k$, such that $z$ is in the support of $u_{\nu_0}$ and since the sequence $\bset{a_k}$ is monotone increasing and $\bset{\eps_k}$ is monotone decreasing, using \eqref{eq_cosh},
$$
u(z)\leq a_{\nu_0}\cdot u_{\nu_0}(z)\le a_{\nu_0}\cosh\bb{\frac\pi{\eps_{\nu_0}}\abs{\Rea(z)}}\le a_k\cosh\bb{\frac\pi{\eps_k}\abs{\Rea(z)}}\le 2a_k\exp\bb{\frac\pi{\eps_k}\abs{\Rea(z)}}.
$$
To see Property \ref{item_u_pasting_2}, note that if $z$ belongs to the support of one of the functions $u_k$, then $k$ is unique and either $\Ima(z)\in\bb{\tau_k-\frac12+\frac{\eps_k}4,\tau_k-\frac12+\frac{3\eps_k}4}$ or $\Ima(z)\in\bb{\tau_k+\frac12-\frac{3\eps_k}4,\tau_k+\frac12-\frac{\eps_k}4}$. In the first case we see that there exists $y\in\bb{\frac{\eps_k}4,\frac{3\eps_k}4}$ satisfying
\begin{align*}
u(z)&=a_k\cdot u_k(z)-8\log\abs z=a_k\cdot \cos\bb{\frac\pi{\eps_k}\cdot \bb{\omega_k-\frac12+y}}\cosh\bb{\frac{\pi}{\eps_k}\cdot\Rea(z)}-8\log\abs z\\
&\ge a_k\cdot \cos\bb{\frac\pi{4}}\cosh\bb{\frac{\pi}{\eps_k}\cdot\Rea(z)}-8\log\abs z\ge\frac{\sqrt{2} a_k}{2}\exp\bb{\frac{\pi}{\eps_k}\cdot\abs{\Rea(z)}}-8\log\abs z.
\end{align*}
The second case is identical with $(-y)$ replacing $y$. Lastly, if $z$ does not belong to any strip $\bb{S\setminus S^{-\eps_k}}+i\tau_k$, then it does not belong to the support of any function $u_k$, $k\geq 0$, and therefore
$$
u(z)=\tilde{u}_0(z)=-8\log\abs z,
$$
concluding the proof of the lemma.\end{proof}

%%%%%%%%%%%%%%%%%%%%%%%%%%%%%%%%%%%%%%%%%%%%%%%%%%%%%%%%%%%%%%%%%%%%%%%%%%%%%%%%%%%%%%%%%%%%%%%%%%%%%%%%%%%%%%%%%%%%%%%%%%%%%%%%%%%%%%%%%%%%%%%%%%%
 \begin{figure}[htp]
  \begin{center}
  \def\svgwidth{\linewidth}
 \begingroup%
 \makeatletter%
 \providecommand\color[2][]{%
 	\errmessage{(Inkscape) Color is used for the text in Inkscape, but the package 'color.sty' is not loaded}%
 	\renewcommand\color[2][]{}%
 }%
 \providecommand\transparent[1]{%
 	\errmessage{(Inkscape) Transparency is used (non-zero) for the text in Inkscape, but the package 'transparent.sty' is not loaded}%
 	\renewcommand\transparent[1]{}%
 }%
 \providecommand\rotatebox[2]{#2}%
 \newcommand*\fsize{\dimexpr\f@size pt\relax}%
 \newcommand*\lineheight[1]{\fontsize{\fsize}{#1\fsize}\selectfont}%
 \ifx\svgwidth\undefined%
 \setlength{\unitlength}{711.49606299bp}%
 \ifx\svgscale\undefined%
 \relax%
 \else%
 \setlength{\unitlength}{\unitlength * \real{\svgscale}}%
 \fi%
 \else%
 \setlength{\unitlength}{\svgwidth}%
 \fi%
 \global\let\svgwidth\undefined%
 \global\let\svgscale\undefined%
 \makeatother%
 \begin{picture}(1,0.43824701)%
 	\lineheight{1}%
 	\setlength\tabcolsep{0pt}%
 	\put(0,0){\includegraphics[width=\unitlength,page=1]{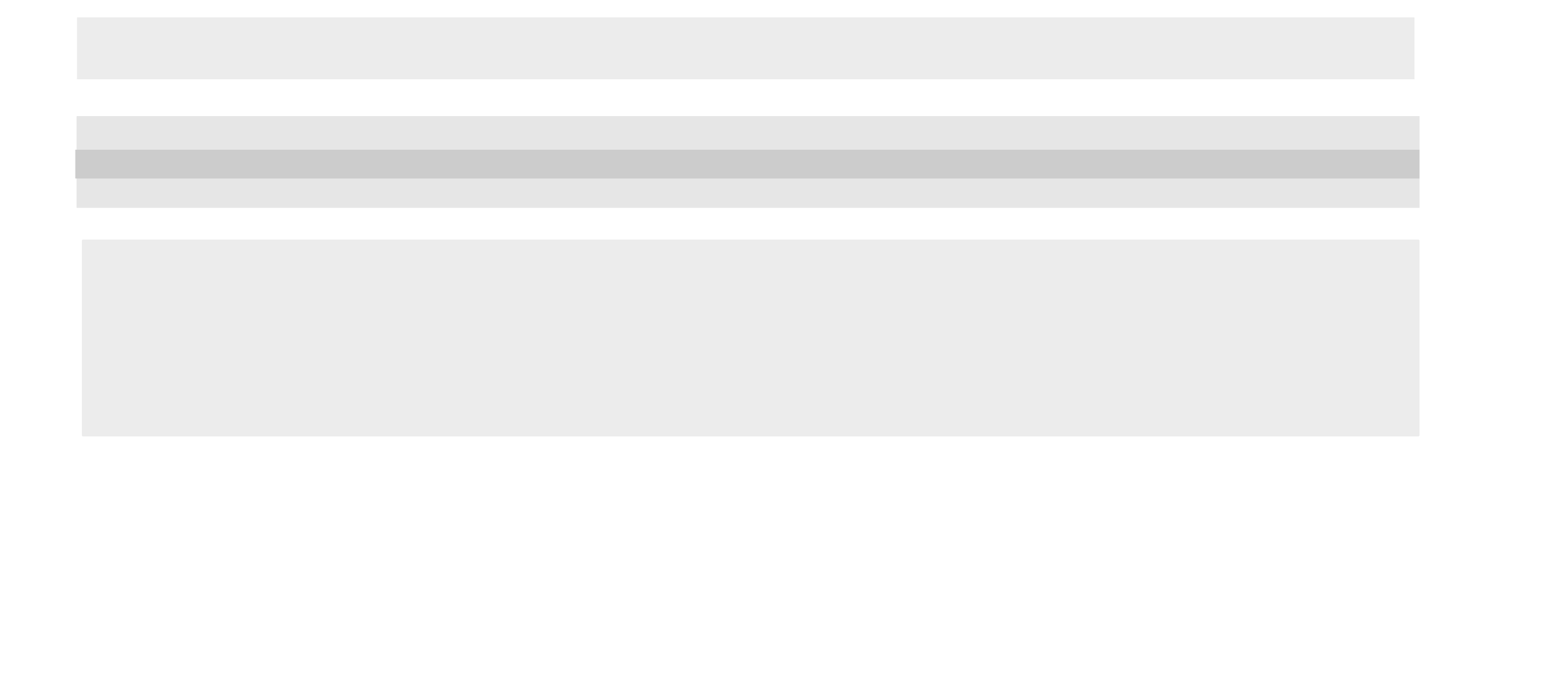}}%
 	\put(-0.02845311,0.3509626){\color[rgb]{0,0,0}\makebox(0,0)[lt]{\smash{\begin{tabular}[t]{l}$-\frac{\varepsilon_k}{4}$\end{tabular}}}}%
 	\put(0,0){\includegraphics[width=\unitlength,page=2]{u_chi2.pdf}}%
 	\put(0.99076561,0.33604033){\color[rgb]{0,0,0}\makebox(0,0)[lt]{\lineheight{1.25}\smash{\begin{tabular}[t]{l}$-\frac{3\varepsilon_k}{8}$\end{tabular}}}}%
 	\put(0.98460697,0.09429469){\color[rgb]{0,0,0}\makebox(0,0)[lt]{\lineheight{1.25}\smash{\begin{tabular}[t]{l}$-\frac{3\varepsilon_k}{8}$\end{tabular}}}}%
 	\put(0.92811474,0.31832945){\color[rgb]{0,0,0}\makebox(0,0)[lt]{\lineheight{1.25}\smash{\begin{tabular}[t]{l}$-\frac{5\varepsilon_k}{8}$\end{tabular}}}}%
 	\put(0,0){\includegraphics[width=\unitlength,page=3]{u_chi2.pdf}}%
 	\put(0.70105514,0.39977246){\color[rgb]{0,0,0}\makebox(0,0)[lt]{\lineheight{1.25}\smash{\begin{tabular}[t]{l}$\fontsize{9pt}{1em}\chi\equiv 1$\end{tabular}}}}%
 	\put(0.70708688,0.21166457){\color[rgb]{0,0,0}\makebox(0,0)[lt]{\lineheight{1.25}\smash{\begin{tabular}[t]{l}$\fontsize{9pt}{1em}\chi\equiv 1$\end{tabular}}}}%
 	\put(0.70114762,0.3255987){\color[rgb]{0,0,0}\makebox(0,0)[lt]{\lineheight{1.25}\smash{\begin{tabular}[t]{l}$\fontsize{7pt}{1em}\chi\equiv 0$\end{tabular}}}}%
 	\put(0.45622422,0.3286281){\color[rgb]{0,0,0}\makebox(0,0)[lt]{\lineheight{1.25}\smash{\begin{tabular}[t]{l}$\fontsize{9pt}{1em}\nabla\chi\neq 0$\end{tabular}}}}%
 	\put(0.14199344,0.20904093){\color[rgb]{0,0,0}\makebox(0,0)[lt]{\lineheight{1.25}\smash{\begin{tabular}[t]{l}$\fontsize{9pt}{1em}u(z)=-8\log\vert z \vert$\end{tabular}}}}%
 	\put(0.1510405,0.32757834){\color[rgb]{0,0,0}\makebox(0,0)[lt]{\lineheight{1.25}\smash{\begin{tabular}[t]{l}$\fontsize{9pt}{1em}u(z)\!\gg \! 0$\end{tabular}}}}%
 	\put(0.14032636,0.40400803){\color[rgb]{0,0,0}\makebox(0,0)[lt]{\lineheight{1.25}\smash{\begin{tabular}[t]{l}$\fontsize{9pt}{1em}u(z)=-8\log\vert z \vert$\end{tabular}}}}%
 	\put(-0.0301435,0.2957886){\color[rgb]{0,0,0}\makebox(0,0)[lt]{\lineheight{1.25}\smash{\begin{tabular}[t]{l}$-\frac{3\varepsilon_k}{4}$\end{tabular}}}}%
 	\put(0,0){\includegraphics[width=\unitlength,page=4]{u_chi2.pdf}}%
 	\put(-0.02885872,0.07557819){\color[rgb]{0,0,0}\makebox(0,0)[lt]{\smash{\begin{tabular}[t]{l}$-\frac{\varepsilon_k}{4}$\end{tabular}}}}%
 	\put(-0.04823262,0.2807334){\color[rgb]{0,0,0}\makebox(0,0)[lt]{\lineheight{1.25}\smash{\begin{tabular}[t]{l}$-\varepsilon_k$\end{tabular}}}}%
 	\put(0,0){\includegraphics[width=\unitlength,page=5]{u_chi2.pdf}}%
 	\put(0.92770902,0.11490632){\color[rgb]{0,0,0}\makebox(0,0)[lt]{\lineheight{1.25}\smash{\begin{tabular}[t]{l}$-\frac{5\varepsilon_k}{8}$\end{tabular}}}}%
 	\put(0,0){\includegraphics[width=\unitlength,page=6]{u_chi2.pdf}}%
 	\put(0.70064954,0.03405589){\color[rgb]{0,0,0}\makebox(0,0)[lt]{\lineheight{1.25}\smash{\begin{tabular}[t]{l}$\fontsize{9pt}{1em}\chi\equiv 1$\end{tabular}}}}%
 	\put(0.70074194,0.10401357){\color[rgb]{0,0,0}\makebox(0,0)[lt]{\lineheight{1.25}\smash{\begin{tabular}[t]{l}$\fontsize{7pt}{1em}\chi\equiv 0$\end{tabular}}}}%
 	\put(0.4558186,0.10210198){\color[rgb]{0,0,0}\makebox(0,0)[lt]{\lineheight{1.25}\smash{\begin{tabular}[t]{l}$\fontsize{9pt}{1em}\nabla\chi\neq 0$\end{tabular}}}}%
 	\put(0.15063491,0.1042574){\color[rgb]{0,0,0}\makebox(0,0)[lt]{\lineheight{1.25}\smash{\begin{tabular}[t]{l}$\fontsize{9pt}{1em}u(z)\!\gg \! 0$\end{tabular}}}}%
 	\put(0.13992077,0.02782772){\color[rgb]{0,0,0}\makebox(0,0)[lt]{\lineheight{1.25}\smash{\begin{tabular}[t]{l}$\fontsize{9pt}{1em}u(z)=-8\log\vert z \vert$\end{tabular}}}}%
 	\put(-0.03054911,0.13093515){\color[rgb]{0,0,0}\makebox(0,0)[lt]{\lineheight{1.25}\smash{\begin{tabular}[t]{l}$-\frac{3\varepsilon_k}{4}$\end{tabular}}}}%
 	\put(0,0){\includegraphics[width=\unitlength,page=7]{u_chi2.pdf}}%
 	\put(-0.0541158,0.15422939){\color[rgb]{0,0,0}\makebox(0,0)[lt]{\lineheight{1.25}\smash{\begin{tabular}[t]{l}$-\varepsilon_k$\end{tabular}}}}%
 	\put(0,0){\includegraphics[width=\unitlength,page=8]{u_chi2.pdf}}%
 \end{picture}%
 \endgroup%
	\caption{Schematic of the values that the subharmonic function $u$ from Lemma \ref{lem:sh_stripes} and the function $\chi$ from the proof of Theorem \ref{thm:pasting_strips} take in the strip $S+i\tau_k$ for $k\geq 1$.}
	\label{fig:sub_sec2}
	\end{center}
\end{figure}
In our calculations, we will use the following observation:
	\begin{obs}\label{obs:int_bnd}
		For every polynomial $P$ and every $\eps>0$ there exists $a_0=a_0(P,\eps)>0$ such that for all $a\geq a_0$, 
		$$
		\integrate 0\infty {\abs{P(x)}\exp(-a\cdot x)}x<\eps.
		$$
	\end{obs}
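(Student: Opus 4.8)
The plan is to reduce the statement to the elementary Gamma-integral identity $\int_0^\infty x^j \exp(-ax)\,dx = j!/a^{j+1}$, valid for $a>0$ and $j\in\N$, which follows immediately from the substitution $t=ax$. First I would write $P(x)=\sum_{j=0}^d c_j x^j$ with $d=\deg P$ and $c_j\in\C$, and apply the triangle inequality on the half-line $[0,\infty)$ to get $\abs{P(x)}\le \sum_{j=0}^d \abs{c_j}\, x^j$ for $x\ge 0$; this removes the absolute value from inside the polynomial at the cost of replacing the coefficients by their moduli.

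Next, integrating term by term and inserting the Gamma-integral identity, for every $a\ge 1$ I would obtain
$$
\int_0^\infty \abs{P(x)}\exp(-ax)\,dx \;\le\; \sum_{j=0}^d \abs{c_j}\,\frac{j!}{a^{j+1}} \;\le\; \frac1a\sum_{j=0}^d \abs{c_j}\,j!,
$$
where the last step uses $a^{j}\ge 1$. Setting $C\defeq \sum_{j=0}^d \abs{c_j}\,j!$, which depends only on $P$, it then suffices to take $a_0\defeq \max\{1,\,(C+1)/\eps\}$, which depends only on $P$ and $\eps$: for every $a\ge a_0$ the display above gives $\int_0^\infty \abs{P(x)}\exp(-ax)\,dx \le C/a \le C/a_0 \le C\eps/(C+1) < \eps$.

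I do not expect any genuine obstacle here, since this is just the standard fact that exponential decay beats polynomial growth. The only two minor points to watch are that the final inequality must be \emph{strict}---handled by the slack in choosing $a_0=(C+1)/\eps$ instead of $C/\eps$---and that $a_0$ must be declared purely in terms of $P$ and $\eps$, with no residual dependence on $a$; both are ensured by the explicit formula above.
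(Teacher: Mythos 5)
Your proof is correct; the paper states this Observation without proof, treating it as standard, and your argument via $\int_0^\infty x^j e^{-ax}\,dx = j!/a^{j+1}$ is exactly the elementary computation it implicitly relies on. The explicit choice $a_0=\max\{1,(C+1)/\eps\}$ with $C=\sum_j |c_j|\,j!$ cleanly handles both the strictness of the inequality and the required dependence of $a_0$ only on $P$ and $\eps$.
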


\begin{proof}[Proof of Theorem \ref{thm:pasting_strips}] We start with the following claim.
\begin{Claim}We can choose a smooth map $\chi\colon \C\rightarrow[0,1]$ so that:
	\begin{enumerate}[label=(\roman*)]
		\item \label{item:chi_11}For every $z\nin \bunion k 0 \infty\bb{S^{-\frac{\eps_k}4}\setminus S^{-\frac{3\eps_k}{4}}}+i \tau_k$, $\chi(z)=1$.
		\item \label{item:chi_12} For every $z\in \bunion k 0 \infty\bb{S^{-\frac{3\eps_k}8}\setminus S^{-\frac{5\eps_k}8}}+i \tau_k$, $\chi(z)=0.$
		\item \label{item:chi_13} $\nabla\chi$ is supported on $\bunion k 0\infty \bb{S^{-\frac{\eps_k}4}\setminus S^{-\frac{3\eps_k}4}}+i \tau_k$ and there exists a numerical constant $C>1$ so that for every $k\ge 0$,
		$$
		\underset{z\in \bb{S^{-\frac{\eps_k}{4}}\setminus S^{-\frac{3\eps_k}{4}}}+i \tau_k}\sup\;\abs{\nabla\chi(z)}\le \frac{C}{\eps_k}.
		$$
	\end{enumerate}
\end{Claim}
\begin{subproof}

Recall the \textit{bump function} $b:\C\rightarrow[0,\infty)$ defined by 
\begin{equation}\label{eq_bump}
b(z)\defeq	\begin{cases}
	A\exp\bb{\frac{-1}{1-\abs z^2}},& \abs z<1,\\
	0,& \abs z\ge 1,
\end{cases}
\end{equation}
where the constant $A$ is chosen so that $\int b(z)dm(z)=1$.  Note that this function is supported on the unit disk. For every $k\geq 0$, let $t_k$ be the convolution of the function $\alpha(z)\defeq\frac{32^2}{\eps_k^2}\cdot b\bb{\frac{32z}{\eps_k}}$, supported on $\bset{\abs z<\frac{\eps_k}{32}}$, with the function $\beta$ defined by
$$\beta(z)=\beta(x+iy)\defeq
\begin{cases}
	1,& z\nin \bunion k 0\infty S^{-\frac{9\eps_k}{32}}\setminus S^{-\frac{23\eps_k}{32}} ,\\% \text{ for any }k\geq 0,\\
	0,& z\in  \bunion k 0\infty S^{-\frac{11\eps_k}{32}}\setminus S^{-\frac{21\eps_k}{32}}, \\%\text{ for some }k\geq 0\\
	-\frac{16}{\eps_k}\bb{y+\frac12}+\frac{11}{2} ,&  y\in\bb{-\frac12+\frac{9\eps_k}{32},-\frac12+\frac{11\eps_k}{32}},\\%  \text{ for some }k\geq 0,\\
	\frac{16}{\eps_k}\bb{y+\frac12}-\frac{21}2,& y\in\bb{-\frac12+\frac{21\eps_k}{32},-\frac12+\frac{23\eps_k}{32}} ,\\%\text{ for some }k\geq 0,\\
	-\frac{16}{\eps_k}\bb{y-\frac12}-\frac{21}2,&  y\in\bb{\frac12-\frac{23\eps_k}{32},\frac12-\frac{21\eps_k}{32}},\\% \text{ for some }k\geq 0,\\
	\frac{16}{\eps_k}\bb{y-\frac12}+\frac{11}{2},&  y\in\bb{\frac12-\frac{11\eps_k}{32},\frac12-\frac{9\eps_k}{32}},\\% \text{ for some }k\geq 0;
\end{cases}
$$
see Figure \ref{fig:beta}.
\begin{figure}[htp]
	\centering
	\def\svgwidth{\linewidth}
\begingroup%
\makeatletter%
\providecommand\color[2][]{%
	\errmessage{(Inkscape) Color is used for the text in Inkscape, but the package 'color.sty' is not loaded}%
	\renewcommand\color[2][]{}%
}%
\providecommand\transparent[1]{%
	\errmessage{(Inkscape) Transparency is used (non-zero) for the text in Inkscape, but the package 'transparent.sty' is not loaded}%
	\renewcommand\transparent[1]{}%
}%
\providecommand\rotatebox[2]{#2}%
\newcommand*\fsize{\dimexpr\f@size pt\relax}%
\newcommand*\lineheight[1]{\fontsize{\fsize}{#1\fsize}\selectfont}%
\ifx\svgwidth\undefined%
\setlength{\unitlength}{717.16535433bp}%
\ifx\svgscale\undefined%
\relax%
\else%
\setlength{\unitlength}{\unitlength * \real{\svgscale}}%
\fi%
\else%
\setlength{\unitlength}{\svgwidth}%
\fi%
\global\let\svgwidth\undefined%
\global\let\svgscale\undefined%
\makeatother%
\begin{picture}(1,0.45849802)%
	\lineheight{1}%
	\setlength\tabcolsep{0pt}%
	\put(0,0){\includegraphics[width=\unitlength,page=1]{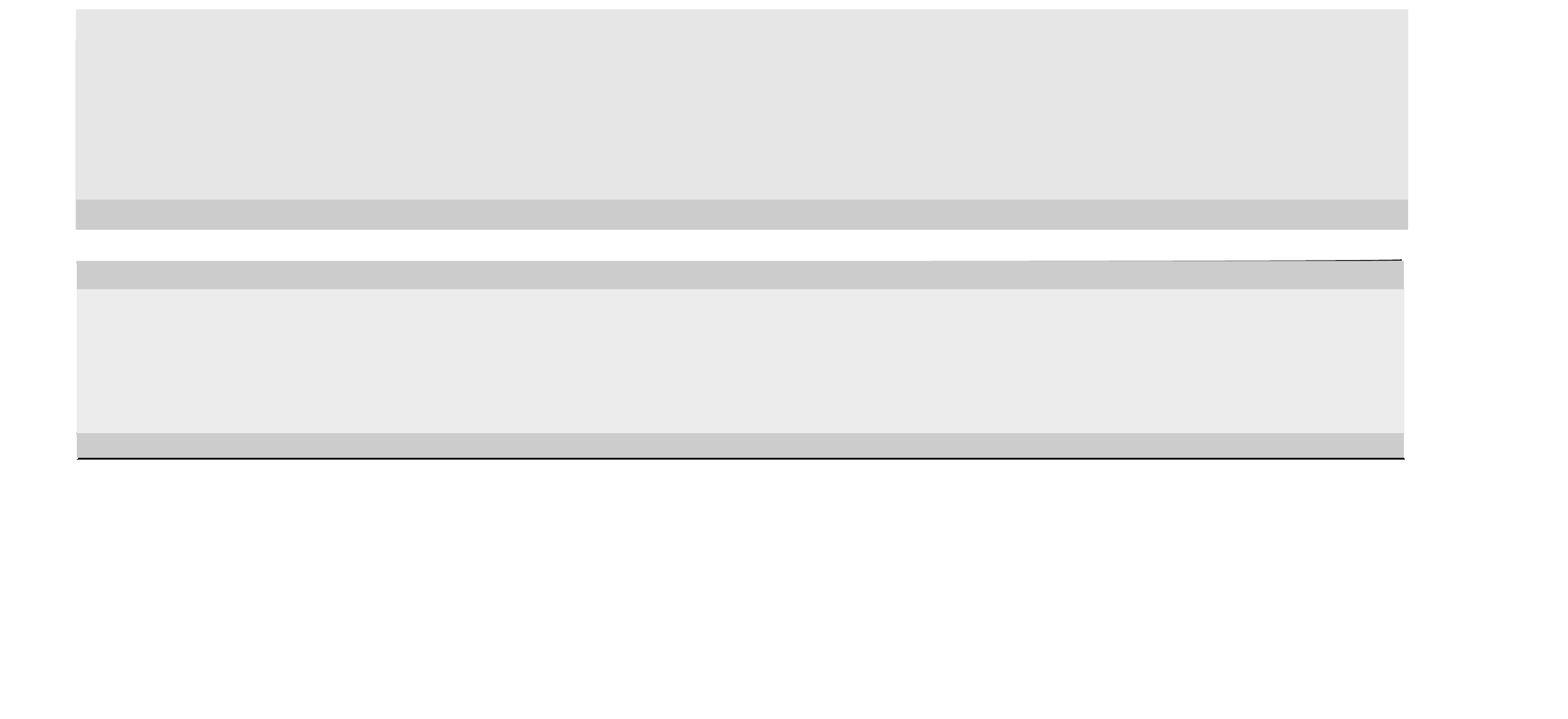}}%
	\put(0.31304802,0.10775884){\color[rgb]{0,0,0}\makebox(0,0)[lt]{\lineheight{1.25}\smash{\begin{tabular}[t]{l}$\color{red}\chi=1$\end{tabular}}}}%
	\put(-0.02823735,0.32675393){\color[rgb]{0,0,0}\makebox(0,0)[lt]{\smash{\begin{tabular}[t]{l}$-\frac{\varepsilon_k}{4}$\end{tabular}}}}%
	\put(-0.03246551,0.2665725){\color[rgb]{0,0,0}\makebox(0,0)[lt]{\lineheight{1.25}\smash{\begin{tabular}[t]{l}$-\frac{3\varepsilon_k}{8}$\end{tabular}}}}%
	\put(-0.03218985,0.17718533){\color[rgb]{0,0,0}\makebox(0,0)[lt]{\lineheight{1.25}\smash{\begin{tabular}[t]{l}$-\frac{5\varepsilon_k}{8}$\end{tabular}}}}%
	\put(0,0){\includegraphics[width=\unitlength,page=2]{chi-beta.pdf}}%
	\put(0.9758154,0.30762372){\color[rgb]{0,0,0}\makebox(0,0)[lt]{\lineheight{1.25}\smash{\begin{tabular}[t]{l}$-\frac{9\varepsilon_k}{32}$\end{tabular}}}}%
	\put(0.92931956,0.28724943){\color[rgb]{0,0,0}\makebox(0,0)[lt]{\lineheight{1.25}\smash{\begin{tabular}[t]{l}$-\frac{11\varepsilon_k}{32}$\end{tabular}}}}%
	\put(0.92094939,0.16253835){\color[rgb]{0,0,0}\makebox(0,0)[lt]{\lineheight{1.25}\smash{\begin{tabular}[t]{l}$-\frac{21\varepsilon_k}{32}$\end{tabular}}}}%
	\put(0.97418065,0.14174727){\color[rgb]{0,0,0}\makebox(0,0)[lt]{\lineheight{1.25}\smash{\begin{tabular}[t]{l}$-\frac{23\varepsilon_k}{32}$\end{tabular}}}}%
	\put(0,0){\includegraphics[width=\unitlength,page=3]{chi-beta.pdf}}%
	\put(0.1457129,0.36979075){\color[rgb]{0,0,0}\makebox(0,0)[lt]{\lineheight{1.25}\smash{\begin{tabular}[t]{l}$\chi\equiv 1$\end{tabular}}}}%
	\put(0.71838727,0.36059816){\color[rgb]{0,0,0}\makebox(0,0)[lt]{\lineheight{1.25}\smash{\begin{tabular}[t]{l}$\beta\equiv 1$\end{tabular}}}}%
	\put(0.71532143,0.21595315){\color[rgb]{0,0,0}\makebox(0,0)[lt]{\lineheight{1.25}\smash{\begin{tabular}[t]{l}$\beta\equiv 0$\end{tabular}}}}%
	\put(0.14780446,0.21918124){\color[rgb]{0,0,0}\makebox(0,0)[lt]{\lineheight{1.25}\smash{\begin{tabular}[t]{l}$\chi\equiv 0$\end{tabular}}}}%
	\put(0,0){\includegraphics[width=\unitlength,page=4]{chi-beta.pdf}}%
	\put(0.14606432,0.07446912){\color[rgb]{0,0,0}\makebox(0,0)[lt]{\lineheight{1.25}\smash{\begin{tabular}[t]{l}$\chi\equiv 1$\end{tabular}}}}%
	\put(0,0){\includegraphics[width=\unitlength,page=5]{chi-beta.pdf}}%
	\put(0.70861965,0.09109867){\color[rgb]{0,0,0}\makebox(0,0)[lt]{\lineheight{1.25}\smash{\begin{tabular}[t]{l}$\beta\equiv 1$\end{tabular}}}}%
	\put(-0.03537575,0.12199424){\color[rgb]{0,0,0}\makebox(0,0)[lt]{\lineheight{1.25}\smash{\begin{tabular}[t]{l}$-\frac{3\varepsilon_k}{4}$\end{tabular}}}}%
	\put(0,0){\includegraphics[width=\unitlength,page=6]{chi-beta.pdf}}%
	\put(0.43989997,0.36979392){\color[rgb]{0,0,0}\makebox(0,0)[lt]{\lineheight{1.25}\smash{\begin{tabular}[t]{l}$S$\end{tabular}}}}%
	\put(0.44391297,0.01343078){\color[rgb]{0,0,0}\makebox(0,0)[lt]{\lineheight{1.25}\smash{\begin{tabular}[t]{l}$S^{-\varepsilon_k}$\end{tabular}}}}%
	\put(0,0){\includegraphics[width=\unitlength,page=7]{chi-beta.pdf}}%
	\put(0.22304309,0.14908759){\color[rgb]{0,0,0}\makebox(0,0)[lt]{\lineheight{1.25}\smash{\begin{tabular}[t]{l}\fontsize{9pt}{1em}transition area for $\chi$\end{tabular}}}}%
	\put(0.54983281,0.14868362){\color[rgb]{0,0,0}\makebox(0,0)[lt]{\lineheight{1.25}\smash{\begin{tabular}[t]{l}\fontsize{9pt}{1em}transition area for $\beta$\end{tabular}}}}%
\end{picture}%
\endgroup%
	\caption{Schematic of the values that the functions $\beta$ and $\chi$ take in $\bb{S\setminus S^{-\eps_k}}+i\tau_k$ for $k\geq 1$.}
	\label{fig:beta}
\end{figure}	

The function $t_k$ is smooth as convolution of a smooth function, $\alpha$, with a continuous function, $\beta$, and it satisfies the following properties:
\begin{enumerate}
\item If $z\nin \bunion k 0 \infty\bb{S^{-\frac{\eps_k}4}\setminus S^{-\frac{3\eps_k}{4}}}$, then $B\bb{z,\frac{\eps_k}{32}}\subset \C\setminus\bb{S^{-\frac{9\eps_k}{32}}\setminus S^{-\frac{23\eps_k}{32}}}$, and so $\beta(z)=1$. Therefore
$$
t_k(z)=\integrate{B(0,\frac{\eps_k}{32})}{}{\frac{32^2}{\eps_k^2}\cdot b\bb{\frac{32w}{\eps_k}}}{w}=\int b(w)dw=1.
$$
\item If $z\in S^{-\frac{3\eps_k}8}\setminus S^{-\frac{5\eps_k}8}$, then $B\bb{z,\frac{\eps_k}{32}}\subset S^{-\frac{11\eps_k}{32}}\setminus S^{-\frac{21\eps_k}{32}}$ and therefore
$$
t_k(z)=\int 0dw=0.
$$
\item If $z\nin \bunion k 0 \infty\bb{S^{-\frac{\eps_k}4}\setminus S^{-\frac{3\eps_k}{4}}},$ then
$$
\frac d{dz} t_k(z)=\int 0 dw,
$$
and if $z\in S^{-\frac{\eps_k}4}\setminus S^{-\frac{3\eps_k}4}$, then $\beta'(z)= \frac{16}{\eps_k}$ and
$$
\abs{\frac d{dz} t_k(z)}\le\int \frac{32^2}{\eps_k^2}\cdot b\bb{\frac{32z}{\eps_k}}\cdot\frac {16}{\eps_k} dw=\frac {16}{\eps_k};
$$
\end{enumerate}
Let $C\defeq 16$ and define the function
$$
\chi(z)\defeq\prodit k 1 \infty t_k(z-i\tau_k)=	\begin{cases}
t_k(z-i\tau_k),& z\in S+i\tau_k \text{ for some }k\geq 0,\\\
1,& \text{otherwise}.
						\end{cases}
$$
This function satisfies (i), (ii), and (iii) as needed.

\end{subproof}

We define the \textit{model map} $h\colon \C \to \C$ by
$$
h(z)\defeq		\begin{cases}
	\chi(z) \bb{h_k(z-i \tau_k)+i \tau_{k+1}},&  z\in S^{-\frac{5\eps_k}8} +i \tau_k,\\
	0,& \chi(z)=0,\\
	\chi(z)\cdot i\tau_0,& \text{otherwise}.
\end{cases}
$$

Then, define the function $g\colon \C \to \C$ as $g(z)\defeq \overline\partial h(z)$, and observe that by the definition of $h$, the properties of $\chi$, and since the maps $h_k$ are holomorphic, 
$$
g(z)
=\begin{cases}
	\overline\partial\chi(z)\cdot \bb{h_k(z-i \tau_k)+i \tau_{k+1}},& z\in \bb{S^{-\frac{5\eps_k}8}\setminus S^{-\frac{3\eps_k}{4}}} +i \tau_k,\\
	\overline\partial\chi(z)\cdot i\tau_0,& z\in \bb{S^{-\frac{\eps_k}4}\setminus S^{-\frac{3\eps_k}{8}}}+i \tau_k,\\
	0,& \text{otherwise}.
\end{cases}
$$

Next, we want to apply \hyperref[thm:Hormander]{Hörmander's Theorem} to $g$ and the subharmonic function $u$ provided by Lemma \ref{lem:sh_stripes}, for which we will show that the integral in the statement of the theorem is finite.

\begin{Claim}There exists a sequence $a_k\nearrow \infty$ with $a_0>10\cdot \tau_0$ such that:
\begin{enumerate}
\item \label{item_1a}For all $z$ with $\Ima(z)<\tau_{k+1}-\frac{1}{2}$, 
\begin{equation}\label{eq_ak}
2\vert h(z)\vert\leq \exp\bb{a_k\exp\bb{\frac{\pi}{\eps_k}\abs{\Rea(z)}}}.
\end{equation}
\item \label{item_3a} For every $k\geq 0$, $a_k$  depends only on  $\{h_j\}^{k+1}_{j=0}$, $\{\eps_j\}^{k+1}_{j=0}$ and $\{\tau_{j}\}^{k+1}_{j=0}$.
\item \label{item_2a} If $u\colon \C\to \R$ is the subharmonic function provided by Lemma~\ref{lem:sh_stripes} for this choice of $\{a_k\}$, then
\begin{equation}\label{eq_bound_int_2}
	\int_\C\abs {g(z)}^2e^{-u(z)}dm(z)<\frac{1}{2}.
\end{equation}
\end{enumerate}
\end{Claim}
\begin{subproof}

Recall that by assumption, for each $k\geq 0$, 
$$\vert h_k(z)\vert^2\le\bb{\vert z\vert^{n_k}+b_k}^2$$
for some $n_k, b_k\in \N$. 
Let
$$P_k(\vert z\vert )\defeq \bb{\vert z\vert^{n_k}+b_k}^2+\tau_0^2+\tau^2_{k+1},$$
therefore, for every $z$ such that $0<\Ima(z)<\tau_{k+1}-\frac{1}{2}$, we can choose $a_k$ such that
\begin{align}\label{eq_align_ak}
	\vert h(z)\vert& \leq P_k(\vert z \vert) \leq P_k\bb{\bb{\bb{\vert \Rea z \vert}^2+\bb{\tau_{k+1}-\frac12}^2}^{\frac{1}{2}}} \nonumber\\ 
	&\leq \frac{1}{2}\exp\bb{a_k\exp\bb{\frac{\pi}{\eps_k}\abs{\Rea(z)}}}.
\end{align}
Since for every $z$ with $\Ima(z)\leq 0$ we have that $h(z)=i\tau_0$ we deduce that \eqref{item_1a} holds for the above choice of $a_k$. %Observe that for $z=x+iy\in S$, $\vert z+i\tau_k\vert^2\leq x^2+\bb{\tau_k+1}^2.$ 
For our estimates, we will use the polynomial
$$Q_k(x)\defeq C^2\cdot  P_k(x^2+\bb{\tau_k+1}^2)\cdot \bb{x^2+\bb{\tau_k+1}^2}^4,$$ 
where $C>1$ is the constant from property \ref{item:chi_13} of the function $\chi$. Then, by Observation~\ref{obs:int_bnd}, for each $k$, we can choose $a_k>a_{k-1}$ big enough such that  
\begin{equation}\label{eq_Qk}
\integrate 0\infty{Q_k(x)e^{-\frac{a_k}4x}}x<\frac{\eps_k}{2\cdot3^k}
\end{equation}
also holds. Note that $a_k$ only depends on $h_k$, $\tau_k$, $\tau_{k+1}$ and $a_{k-1}$, and so \eqref{item_3a} holds. It is left to show  \eqref{item_2a}.
Let $u\colon \C\to \R$ be the subharmonic function provided by Lemma~\ref{lem:sh_stripes} for this choice of $\{a_k\}$. Then, by property \ref{item:chi_13} of the function $\chi$,
 property \ref{item_u_pasting_2}\ref{item_u_pasting_21} of $u$ and \eqref{eq_Qk},
\begin{align*}
	&\int_\C\abs {g(z)}^2e^{-u(z)}dm(z)\le C^2\sumit k 0 \infty \frac1{\eps_k^2} \int_{S^{-\frac{\eps_k}4}\setminus S^{-\frac{3\eps_k}4}} \abs{h(z+i\tau_k)}^2e^{-u(z+i\tau_k)}dm(z)\\
	&\le C^2\sumit k 0 \infty \frac1{\eps_k^2}\cdot\frac{\eps_k}{2}\integrate{-\infty}{\infty}{P_k(x^2+\bb{\tau_k+1}^2)\cdot e^{-\frac{a_k}{3}\exp\bb{\frac{\pi}{\eps_k} \abs{x}}}\bb{x^2+\bb{\tau_k+1}^2}^4}x\\
	&\le \sumit k 0 \infty \frac1{\eps_k}\integrate 0\infty{Q_k(x)e^{-\frac{a_k}4\cdot \frac{\pi}{\eps_k}x}}x\le \sumit k 0 \infty \frac1{\eps_k}\integrate 0\infty{Q_k(x)e^{-\frac{a_k}4x}}x<\frac12,
\end{align*}
concluding that \eqref{item_2a} holds.
\end{subproof}

Let $\alpha$ be the solution to the $\bar\partial$-equation $\bar\partial\alpha(z)=g(z)$ from \hyperref[thm:Hormander]{Hörmander's Theorem} and define $f\colon \C\to\C$ by
$$f(z)\defeq h(z)-\alpha(z),$$ 
recalling from \eqref{eq_f_entire} that $f$ is entire. 

Let $z\in\C$  
and $r\in\left(0,1\right]$ be such that for all $w\in B\bb{z,r}$, we have $\chi(w)=1$, implying that $h$ is holomorphic in $B(z,r)$ and therefore so is $\alpha=f-h$. Using % 
Cauchy's integral formula applied to multiple circles, Cauchy–Schwarz inequality,
% property \ref{item_u_pasting_2}\ref{item_u_pasting_21} of the subharmonic function $u$,
\hyperref[thm:Hormander]{Hörmander's Theorem} and \eqref{eq_bound_int_2},
\begin{align*}
	\abs{f(z)-h(z)}&=\frac{1}{\pi r^2}\abs{\integrate{B\bb{z, r}}{}{\alpha(w)}{m(w)}}\le \frac1{\sqrt \pi r}\sqrt{\integrate{B\bb{z, r}}{}{\abs{\alpha(w)}^2}{m(w)}}\\
	&\le \frac1{r}\underset{w\in B\bb{z, r}}\max e^{\frac12\cdot u(w)}\bb{1+\abs w^2}\sqrt{\integrate{B\bb{z, r}}{}{\abs{\alpha(w)}^2\cdot\frac{e^{-u(w)}}{\bb{1+\abs w^2}^2}}{m(w)}}\\
	&\le \frac1{r}\underset{w\in B\bb{z, r}}\max e^{\frac12\cdot u(w)}\bb{1+\abs w^2}\sqrt{\integrate{\C}{}{\abs{g(w)}^2 e^{-u(w)}}{m(w)}}\\
	&\le \frac1{2r}\underset{w\in B\bb{z, r}}\max e^{\frac12\cdot u(w)}\bb{1+\abs w^2}.
\end{align*}
If $z\in (S^{-\eps_k}+i\tau_k)\cup W_k$ for some $k\geq 0$, then, following property \ref{item_u_pasting_2}\ref{item_u_pasting_22} of the subharmonic function $u$, with $r\defeq \eps_k/4$, we have
\[	\abs{f(z)-h(z)}\leq  \frac{2}{\eps_k}\underset{w\in B\bb{z,\eps_k/4}}\max \frac{1+\abs w^2}{\abs w^4}\leq \frac{2+\frac12}{\eps_k\abs z^2}\le \frac{3}{\eps_k\cdot \tau_k^2}=\delta_k,\]
proving properties \eqref{item:pasting_f_hk} and \eqref{item:pasting_f_i} of the function $f$. 

Arguing similarly, with $r=1$, using Cauchy's integral formula, Cauchy–Schwarz inequality, \hyperref[thm:Hormander]{Hörmander's Theorem},  \ref{item_u_pasting_1} of the subharmonic function $u$, and the choice of $a_k$ in \eqref{eq_ak}, the following holds. For every $z$ with $\Ima(z)\le \tau_{k+1}-\frac32$, %\todo{There was a problem with inequality. We have that $\vert a-b\vert^2\geq a^2+b^2,$ but  $\vert a-b\vert^2\leq 2a^2+2b^2,$ }
\begin{align*}
	\abs{f(z)}&=\frac{1}\pi\abs{\integrate{B\bb{z,1}}{}{f(w)}{m(w)}}
	\le \frac1{\sqrt \pi}\sqrt{\integrate{B\bb{z, 1}}{}{\abs{h(w)-\alpha(w)}^2}{m(w)}}\\
	&\le 2\underset{w\in B(z,1)}\max\abs{h(w)}+\frac{2}{{\sqrt\pi}}\sqrt{\integrate{B\bb{z,1}}{}{\abs{\alpha(w)}^2}{m(w)}}\\
	&\le 2\underset{w\in B(z,1)}\max\abs{h(w)}+\underset{w\in B\bb{z,1}}\max e^{\frac12\cdot u(w)}\bb{1+\abs w^2}\\
	&\le 2\underset{w\in B(z,1)}\max\abs{h(w)}+3\exp\bb{a_k\exp\bb{\frac{\pi}{\eps_k}\abs{\Rea(z)}}}\le 4\exp\bb{a_k\exp\bb{\frac{\pi}{\eps_k}\abs{\Rea(z)}}}.
\end{align*}
We have thus shown property \eqref{item:pasting_upperbound} of $f$, concluding our proof.
\end{proof}

%%%%%%%%%%%%%%%%%%%%%%%%%%%%%%%%%%%%%%%%%%%%%%%%%%%%%%%%%%%%%%%%%%%%%%%%%%%%%%%%%%%%%%%%%%%%%%%%%%%%%%%%%%%%%%%%%%%%%%%%%%%%%%%%%%%%%%%%%%%%%%%%%%%%%%%%%%%%%%%%%%%%%%%%%%%%%%%%%%%%%%%%%%%%%%%%%%%%%%%%%%%%%%%%%%%%%%%%%%%%%%%%%%%%%%%%%%%%%%%%%%%%%%%%%%%%%%%%%%%%%%%%%%%%%%%%%%%%%%%%%%%%%%%%%%

\section{Functions of small order with unbounded wandering domains}\label{sec_small_order}

The goal of this section is to prove Theorem \ref{thm_order1/2}. In a rough sense, the idea is as follows: given $0<\eps\leq 1/2$, we will choose a sector of angle $\eps$ around $\R_-$, and a sequence of pairwise disjoint disks, $B_n$, of increasingly large radii, tending to infinity. The model map $h$ we use sends the sector to the centre of the first disk,  and each disk to the centre of the next; see Figure \ref{fig:model_1/2}. In Theorem~\ref{thm_small_technical} we construct an entire function $f$ approximating $h$ and of order $\rho(f)=1/2+\eps$.
 In Corollary \ref{cor:sequence}, we show that for the right choice of parameters, this function has a fast escaping wandering domain containing an unbounded domain.

\begin{thm} \label{thm_small_technical} There exists a constant, $C>800$, so that for every $\eps\in\left(0,\frac12\right]$ the following holds. For any $z_1>\exp(\frac{C}{\eps})$, define the sequence $z_n\nearrow \infty$ by
\begin{equation}\label{eq_fast_zn}
z_{n+1}\defeq\frac{\exp\bb{\frac16 z_n^{\frac12+\eps}}}n.
\end{equation}
Moreover, for every $n\geq 1$, define the disk $B_n\defeq B\bb{z_n,\frac{z_n}9}$, and the map
\begin{equation}\label{eq_h}
		h(z)\defeq	\begin{cases}
			z_{n+1},& z\in 3B_n,\\
			z_1,& z\in \bset{\abs{\Arg(z)}>\pi-\eps}^{+1}.
		\end{cases}
\end{equation}
Then there exists an entire function, $f$, with $\rho(f)=\frac12+\eps$ such that
	\begin{enumerate}
		\item  \label{item:f_small_h} For every $z\in \bset{\abs{\Arg(z)}>\pi-\frac\eps 4}^{-\frac14}\cup\bunion 1 n\infty B\bb{z_n,\frac14}$,
		$$
		\abs{f(z)-h(z)}\le\frac{C\cdot z_1\abs {z}^2}{\eps} \exp\bb{-\frac\eps C \abs {z}^{\frac12+\eps}}.
		$$
		\item \label{item:f_small_bound} For every $z\in \C$,
$$
\abs{f(z)}\le 	\frac {C\cdot z_1}{\eps}\exp\bb{\abs z^{\frac12+\eps}}.
$$
	\end{enumerate}
\end{thm}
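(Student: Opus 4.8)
The plan is to run the same machinery as in the proof of Theorem~\ref{thm:pasting_strips}. First I would extend the partially-defined model map $h$ to all of $\C$ by multiplying it by a smooth cut-off $\chi\colon\C\to[0,1]$, equal to $1$ on most of each $3B_n$ and on most of the fat sector $\bset{\abs{\Arg z}>\pi-\eps}^{+1}$, equal to $0$ on thin transition bands around $\partial(3B_n)$ and around the boundary of the fat sector, and with the remaining ``background'' value of $h$ also set to $z_1$; this is the analogue of the cut-off Claim inside the proof of Theorem~\ref{thm:pasting_strips}, and it must come with a bound of the form $\abs{\dbar\chi}\lesssim 1/(\text{band width})$. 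Then I would construct a subharmonic weight $u\colon\C\to\R$ (the analogue of Lemma~\ref{lem:sh_stripes}), apply \hyperref[thm:Hormander]{Hörmander's Theorem} to $g\defeq\dbar h$ — which is supported exactly on the transition bands — and to $u$, obtaining $\alpha$ with $\dbar\alpha=g$ and the $L^2$-estimate \eqref{eq:Hormander}; finally set $f\defeq h-\alpha$, entire by \eqref{eq_f_entire}. Properties \eqref{item:f_small_h} and \eqref{item:f_small_bound} are then read off from Cauchy's integral formula on small disks together with Cauchy--Schwarz and \eqref{eq:Hormander}, exactly as at the end of the proof of Theorem~\ref{thm:pasting_strips}.

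The heart of the matter — and the main obstacle — is the construction of $u$. Writing $s\defeq \tfrac12+\eps$, the weight must satisfy, roughly: (i) $u(z)\le 2\abs z^{s}-4\log\abs z+O\!\big(\log\tfrac{z_1}{\eps}\big)$ for all $z$, together with $u(z)=-8\log\abs z$ away from the transition bands and the approximation set, which forces $\rho(f)\le s$ and yields \eqref{item:f_small_bound}; (ii) on the approximation set $\bset{\abs{\Arg z}>\pi-\tfrac\eps4}^{-1/4}\cup\bunion1n\infty B\!\big(z_n,\tfrac14\big)$ one needs $u(z)\le -\tfrac{2\eps}{C}\abs z^{s}+\log\tfrac1\eps$, so that the factor $e^{u/2}$ produces the $\exp\!\big(-\tfrac\eps C\abs z^{s}\big)$ appearing in \eqref{item:f_small_h}; and (iii) on the transition bands $u$ must be large enough to beat $\abs g^2$: since $\abs h\le z_{n+1}$ on the band around $\partial(3B_n)$ and $\abs h\le z_1$ on the sector band while $\abs{\dbar\chi}$ is of the reciprocal order of the band width, one needs $u\gtrsim 2\log z_{n+1}\approx\tfrac13 z_n^{s}$ on the $n$-th disk band and $u\approx\abs z^{s}$ (up to a fixed constant factor) on the sector band, so that $\int_\C\abs g^2 e^{-u}\,dm\lesssim z_1^2/\eps$ and $\sqrt{\int_\C\abs g^2 e^{-u}\,dm}\lesssim z_1/\sqrt\eps$. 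The sector part of $u$ is built from an explicit subharmonic function adapted to the opening $2\eps$ — the analogue of the functions $v_{\eps_k}$ and $\tilde u_0$ of Lemma~\ref{lem:sh_stripes} — that is positive and of order $s$ on the side bands of the fat sector and negative (of size $\eps\abs z^{s}$) on the thin middle sector.

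The delicate point is reconciling (ii) and (iii) for the disks: $u$ must drop to $\approx -\eps z_n^{s}$ on the tiny disk $B(z_n,\tfrac14)$ and yet climb to $\approx\tfrac13 z_n^{s}$ on a band at distance of order $z_n$ from $z_n$, while never exceeding $\abs z^{s}$ further out and, crucially, remaining small at every other centre $z_m$ and on the sector. A purely radial subharmonic bump centred at $z_n$ cannot do all of this — being radial and increasing it would stay of size $\gtrsim z_n^{s}$ at all larger radii, in particular contaminating $z_{n+1}$, where $u$ is required to be very negative. I would resolve this by taking $u$ to be the sum of $-8\log\abs z$, one localised subharmonic bump per disk whose ``region of growth'' is routed away from the real axis (so that it is negligible at each $z_m$, $m\neq n$, and on the sector around $\R_-$), kept below $\abs z^{s}$ by using the enormous room available between consecutive disks ($z_{n+1}=\exp(\tfrac16 z_n^{s})/n\gg z_n$), and the sector function described above; the super-fast growth of $\{z_n\}$ guarantees that all the overlapping tails of these pieces sum to something much smaller than $\abs z^{s}$.

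Finally the order is pinned down in two steps. The upper bound $\rho(f)\le \tfrac12+\eps$ is immediate from \eqref{item:f_small_bound}, since $\log\log M_f(r)\le (\tfrac12+\eps)\log r+O(1)$. For the lower bound, \eqref{item:f_small_h} evaluated at $z=z_n$ gives $\abs{f(z_n)-z_{n+1}}\le\tfrac{C z_1 z_n^2}{\eps}\exp\!\big(-\tfrac\eps C z_n^{1/2+\eps}\big)$, which is much smaller than $z_{n+1}=\exp(\tfrac16 z_n^{1/2+\eps})/n$; hence $M_f(z_n)\ge\abs{f(z_n)}\ge\tfrac12 z_{n+1}$ for $n$ large, so $\log\log M_f(z_n)\ge(\tfrac12+\eps)\log z_n-O(\log n)$ and therefore $\rho(f)=\limsup_{r\to\infty}\tfrac{\log\log M_f(r)}{\log r}\ge\tfrac12+\eps$. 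Combining the two bounds gives $\rho(f)=\tfrac12+\eps$.
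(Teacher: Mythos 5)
Your overall architecture is exactly the paper's: a smooth cut-off $\chi$ with $\abs{\nabla\chi}$ of reciprocal band-width, a subharmonic weight $u$ fed into \hyperref[thm:Hormander]{Hörmander's Theorem}, $f\defeq \chi h-\alpha$, Cauchy's formula plus Cauchy--Schwarz for properties \eqref{item:f_small_h} and \eqref{item:f_small_bound}, and the same two-sided order computation (upper bound from \eqref{item:f_small_bound}, lower bound from $\abs{f(z_n)}\approx z_{n+1}$). You also correctly identify the three competing demands on $u$ and that the disks are where they collide.

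The gap is that the construction of the disk part of $u$ --- which you yourself call the heart of the matter --- is not actually carried out, and the mechanism you sketch faces a real obstruction. A subharmonic function on $\C$ that is positive on the bounded annulus $3B_n\setminus 2B_n$ and $\le 0$ both inside $2B_n$ and outside $3B_n$ cannot exist (the open set where it is positive would be bounded with zero boundary values, contradicting the maximum principle); so each ``localised bump'' must carry its positivity out to infinity along some channel, and you would need infinitely many such channels simultaneously avoiding the sector and all the other disks $B_m$ while staying below $\abs z^{1/2+\eps}$. You gesture at ``routing the growth away from the real axis'' but give no construction. The paper resolves this by inverting the logic: it takes the single global subharmonic function $v(z)=\abs z^{\alpha}\cos(\beta\Arg z)$ with $\alpha=\tfrac12+\eps$ and $\beta=\tfrac12+\tfrac{\eps}{4\pi-2\eps}$, which is automatically $\ge\tfrac12\abs z^{1/2+\eps}$ near $\R_+$ (covering every annulus $3B_n\setminus 2B_n$ at once) and $\le-\tfrac\eps8\abs z^{1/2+\eps}$ on the sector, and then \emph{digs wells} inside each $B_n$ by replacing $v$ there with its Poisson extension plus $A_n\log(\abs{z-z_n}/r_n)$. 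Subharmonicity of the glued function is a genuine constraint: it forces $A_n\le c\,\eps(\abs{z_n}-r_n)^{1/2+\eps}$ via a normal-derivative comparison across $\partial B_n$ (Green's identity), while the required depth of the well forces $A_n\ge(\abs{z_n}+r_n)^{1/2+\eps}/\log r_n$; compatibility of these two bounds is precisely what produces the hypothesis $z_1>\exp(C/\eps)$. None of this appears in your proposal, and also note that your background value $u=-8\log\abs z$ is a feature of Section~\ref{sec:pasting} that the paper drops here --- the exponentially small error in \eqref{item:f_small_h} comes from $u\le-c\abs{z_n}^{1/2+\eps}$ on $B(z_n,\tfrac12)$, not from a logarithmic term.
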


\begin{figure}[htp]
	\centering
		\def\svgwidth{\linewidth}
\begingroup%
\makeatletter%
\providecommand\color[2][]{%
	\errmessage{(Inkscape) Color is used for the text in Inkscape, but the package 'color.sty' is not loaded}%
	\renewcommand\color[2][]{}%
}%
\providecommand\transparent[1]{%
	\errmessage{(Inkscape) Transparency is used (non-zero) for the text in Inkscape, but the package 'transparent.sty' is not loaded}%
	\renewcommand\transparent[1]{}%
}%
\providecommand\rotatebox[2]{#2}%
\newcommand*\fsize{\dimexpr\f@size pt\relax}%
\newcommand*\lineheight[1]{\fontsize{\fsize}{#1\fsize}\selectfont}%
\ifx\svgwidth\undefined%
\setlength{\unitlength}{745.51181102bp}%
\ifx\svgscale\undefined%
\relax%
\else%
\setlength{\unitlength}{\unitlength * \real{\svgscale}}%
\fi%
\else%
\setlength{\unitlength}{\svgwidth}%
\fi%
\global\let\svgwidth\undefined%
\global\let\svgscale\undefined%
\makeatother%
\begin{picture}(1,0.39923954)%
	\lineheight{1}%
	\setlength\tabcolsep{0pt}%
	\put(0,0){\includegraphics[width=\unitlength,page=1]{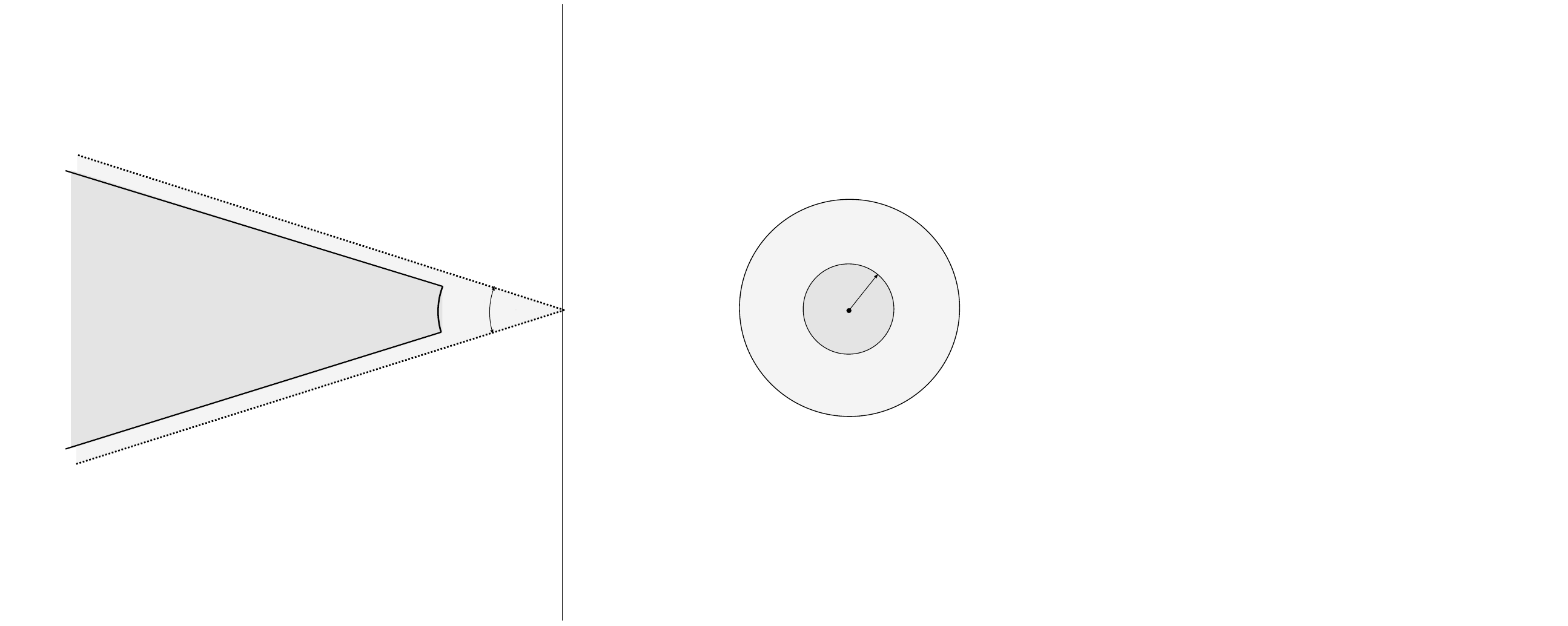}}%
	\put(0.52847129,0.18671775){\color[rgb]{0,0,0}\makebox(0,0)[lt]{\lineheight{1.25}\smash{\begin{tabular}[t]{l}$z_1$\end{tabular}}}}%
	\put(0.52551307,0.28223514){\color[rgb]{0,0,0}\makebox(0,0)[lt]{\lineheight{1.25}\smash{\begin{tabular}[t]{l}$B_1$\end{tabular}}}}%
	\put(0,0){\includegraphics[width=\unitlength,page=2]{domains_section3.pdf}}%
	\put(0.76957756,0.18465944){\color[rgb]{0,0,0}\makebox(0,0)[lt]{\lineheight{1.25}\smash{\begin{tabular}[t]{l}$z_2$\end{tabular}}}}%
	\put(0.76620176,0.31864835){\color[rgb]{0,0,0}\makebox(0,0)[lt]{\lineheight{1.25}\smash{\begin{tabular}[t]{l}$B_2$\end{tabular}}}}%
	\put(0,0){\includegraphics[width=\unitlength,page=3]{domains_section3.pdf}}%
	\put(0.05619426,0.23615884){\color[rgb]{0,0,0}\makebox(0,0)[lt]{\lineheight{1.25}\smash{\begin{tabular}[t]{l}$V$\end{tabular}}}}%
	\put(0.30859071,0.22721775){\color[rgb]{0,0,0}\makebox(0,0)[lt]{\lineheight{1.25}\smash{\begin{tabular}[t]{l}$\fontsize{9pt}{1em}\frac{\varepsilon}{2}$\end{tabular}}}}%
	\put(0,0){\includegraphics[width=\unitlength,page=4]{domains_section3.pdf}}%
	\put(0.3270729,0.28475537){\color[rgb]{0,0,0}\makebox(0,0)[lt]{\lineheight{1.25}\smash{\begin{tabular}[t]{l}$f$\end{tabular}}}}%
	\put(0.63700665,0.25117486){\color[rgb]{0,0,0}\makebox(0,0)[lt]{\lineheight{1.25}\smash{\begin{tabular}[t]{l}$f$\end{tabular}}}}%
	\put(0.80667462,0.22354629){\color[rgb]{0,0,0}\makebox(0,0)[lt]{\lineheight{1.25}\smash{\begin{tabular}[t]{l}$\fontsize{9pt}{1em}\frac{1}{4}$\end{tabular}}}}%
	\put(0.56485543,0.2249664){\color[rgb]{0,0,0}\makebox(0,0)[lt]{\lineheight{1.25}\smash{\begin{tabular}[t]{l}$\fontsize{9pt}{1em}\frac{1}{4}$\end{tabular}}}}%
\end{picture}%
\endgroup%
	\caption{Schematic of the action of the entire function $f$ from Corollary~\ref{cor:sequence}. The unbounded domain $V$ is contained in an unbounded wandering domain $U_0$, and $f^n(V)\subset B(z_n, 1/4)\subset B_n\cap U_n$.}
	\label{fig:model_1/2}
\end{figure}

\begin{cor}\label{cor:sequence}
Let $C>800$ be the constant from Theorem~\ref{thm_small_technical} and let $\eps\in\left(0,\frac12\right]$ be fixed. For every $z_1\ge\exp\bb{\frac{C}\eps}$ the corresponding entire function $f$, constructed in Theorem~\ref{thm_small_technical}, has an unbounded wandering domain, $U_0$, satisfying
$$V\defeq \bset{\vert z\vert >z_1  \text{ and } \abs{\Arg(z)}>\pi-\frac\eps 4}^{-\frac14} \subseteq U\subset A(f),$$
and such that $f^n(V)\subset B(z_n,\frac14)\subset U_n$ for all $n\geq 1$.
\end{cor}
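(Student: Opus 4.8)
\emph{Overview and forward invariance.} The plan is to read off from Theorem~\ref{thm_small_technical} that $f$ carries $V$ and each small disc $B(z_n,\tfrac14)$ almost onto the next centre $z_{n+1}$, chain these inclusions into an orbit of Fatou components, and then recognise that orbit as a chain of fast escaping wandering domains using the facts quoted in the introduction. First I would check that the model map $h$ from \eqref{eq_h} equals the constant $z_1$ on all of $V$ and the constant $z_{n+1}$ on each $B(z_n,\tfrac14)$: indeed $V=\{\,|z|>z_1,\ |\Arg z|>\pi-\tfrac\eps4\,\}^{-1/4}\subseteq\{|\Arg z|>\pi-\tfrac\eps4\}^{-1/4}\subseteq\{|\Arg z|>\pi-\eps\}^{+1}$, while $B(z_n,\tfrac14)\subseteq 3B_n$ (since $z_n>3/4$) and each such disc lies in $\{\Rea w>z_1-\tfrac14\}\subset\{\Rea w>0\}$, hence is disjoint from the sector. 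Since the approximation \eqref{item:f_small_h} is valid exactly on $\{|\Arg z|>\pi-\tfrac\eps4\}^{-1/4}\cup\bigcup_{n\ge1}B(z_n,\tfrac14)$, and for $|z|\ge z_1-\tfrac14\ge\exp(C/\eps)-\tfrac14$ the error $\tfrac{Cz_1|z|^2}{\eps}\exp(-\tfrac\eps C|z|^{1/2+\eps})$ is smaller than $\tfrac14$ (the stretched-exponential decay beats the polynomial prefactor once $z_1$ is this large — a routine one-variable estimate), it follows that $f(V)\subset B(z_1,\tfrac14)$ and $f(B(z_n,\tfrac14))\subset B(z_{n+1},\tfrac14)$ for every $n\ge1$. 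An induction then gives $f^n(V)\subset B(z_n,\tfrac14)$ for all $n\ge1$, and since $z_{n+1}=\exp(\tfrac16 z_n^{1/2+\eps})/n\gg z_n$ the discs $B(z_n,\tfrac14)$ are pairwise disjoint and tend to $\infty$.

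\emph{The Fatou components.} All discs $B(z_n,\tfrac14)$, $n\ge1$, lie in $\{\Rea w>0\}$, so each of the families $\{f^n|_V\}_{n\ge1}$ and $\{f^n|_{B(z_m,1/4)}\}_{n\ge1}$ omits the values $-1$ and $-2$, hence is normal by Montel's theorem; therefore $V$ and every $B(z_m,\tfrac14)$ are contained in $F(f)$. The set $V$ is connected (a truncated sector whose angular width at radius $\ge z_1$ is $\asymp\eps z_1\gg1$, so passing to the $\tfrac14$-interior does not disconnect it) and unbounded, so it lies in a single unbounded Fatou component, which I call $U_0=U$. Writing $U_n$ for the component containing $f^n(U_0)$, the disc $B(z_n,\tfrac14)$ meets $f^n(V)\subseteq U_n$ and lies in $F(f)$, so $B(z_n,\tfrac14)\subseteq U_n$; this already yields $f^n(V)\subset B(z_n,\tfrac14)\subset U_n$ for all $n\ge1$.

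\emph{Fast escape, and the main obstacle.} By \cite[Theorem~1.2]{PG_fastescaping_12} it suffices to place one point of $U_0$ in $A(f)$, so fix $z\in V$, whence $|f^n(z)|\ge z_n-\tfrac14$ for all $n\ge1$, and fix $R\ge1$ with $M_f(r)>r$ for $r\ge R$ (such $R$ exists because $M_f(r)/r\to\infty$). The delicate point is that the global bound \eqref{item:f_small_bound}, $M_f(r)\le\tfrac{Cz_1}{\eps}\exp(r^{1/2+\eps})$, carries a large prefactor and an exponent six times the one in \eqref{eq_fast_zn}, so the naive comparison $M_f^n(R)\le z_{n+\ell}$ does not survive the induction. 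Instead I would choose $\ell$ large and prove by induction on $n$ the \emph{buffered} bound $M_f^n(R)\le\sqrt{z_{n+\ell}}$: the base case needs only $\sqrt{z_\ell}\ge R$, and in the inductive step \eqref{item:f_small_bound} gives $M_f^{n+1}(R)\le\tfrac{Cz_1}{\eps}\exp\big((\sqrt{z_{n+\ell}}\,)^{1/2+\eps}\big)=\tfrac{Cz_1}{\eps}\exp\big(z_{n+\ell}^{1/4+\eps/2}\big)$, which is at most $\sqrt{z_{n+1+\ell}}=\exp(\tfrac1{12}z_{n+\ell}^{1/2+\eps})/\sqrt{n+\ell}$ because $\tfrac1{12}z_{n+\ell}^{1/2+\eps}$ dominates both $z_{n+\ell}^{1/4+\eps/2}$ and $\log\!\big(Cz_1\sqrt{n+\ell}/\eps\big)$ once $z_{n+\ell}\ge z_\ell$ is large enough, which is guaranteed by $z_1\ge\exp(C/\eps)$ with $C>800$ together with the tower-like growth of $(z_n)$. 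Then $|f^{n+\ell}(z)|\ge z_{n+\ell}-\tfrac14\ge\sqrt{z_{n+\ell}}\ge M_f^n(R)$ for all $n$, so $z\in A(f)$, and hence $\overline{U_0}\subset A(f)$ by \cite[Theorem~1.2]{PG_fastescaping_12} again.

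\emph{Conclusion.} Since every fast escaping component of $F(f)$ is a wandering domain \cite[Lemma~4]{Bergweiler_Hinkkanen_fast99}, $U_0$ is the desired unbounded fast escaping wandering domain, the $U_n$ are pairwise distinct, and we have exhibited $V\subseteq U_0\subset A(f)$ together with $f^n(V)\subset B(z_n,\tfrac14)\subset U_n$ for every $n\ge1$. I expect the only genuine difficulty to be the third step — making the $\sqrt{\,\cdot\,}$-buffer induction close despite the mismatch between the constant and exponent in \eqref{item:f_small_bound} and those in the recursion \eqref{eq_fast_zn}; everything else is bookkeeping with the two estimates supplied by Theorem~\ref{thm_small_technical}.
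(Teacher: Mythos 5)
Your proposal is correct and follows essentially the same route as the paper: chain $f(V)\subset B(z_1,\tfrac14)$ and $f(B(z_n,\tfrac14))\subset B(z_{n+1},\tfrac14)$ via the error estimate, invoke Montel's theorem to obtain the Fatou components, and close a buffered induction comparing $M_f^n(R)$ against a quantity safely below $z_{n+\ell}$ using the global bound \eqref{item:f_small_bound}. The only (immaterial) difference is the choice of buffer — you use $\sqrt{z_{n+\ell}}$, exploiting $(\sqrt z)^{1/2+\eps}=z^{1/4+\eps/2}\ll z^{1/2+\eps}$, whereas the paper uses $z_{n+N}/100$, exploiting $(z/100)^{1/2+\eps}\le z^{1/2+\eps}/10<\tfrac16 z^{1/2+\eps}$ — and both close for the same reason.
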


\begin{proof}[Proof of Theorem \ref{thm_order1/2}]
It is a direct consequence of Theorem \ref{thm_small_technical} and Corollary \ref{cor:sequence}.
\end{proof}

\begin{proof}[Proof of Corollary \ref{cor:sequence}, using Theorem \ref{thm_small_technical}]
The function $t\mapsto t^2\exp\bb{-\frac\eps C\sqrt t}$ is strictly decreasing on $\left[z_1,\infty\right)\subset\left[\frac{16C^2}{\eps^2},\infty\right)$, since $\frac{16C^2}{\eps^2}\leq \exp\bb{\frac{C}\eps}<z_1$, and satisfies that for every $t>z_1$
$$
\frac{C\cdot z_1}\eps\cdot t^2\exp\bb{-\frac\eps C\sqrt t}<\frac{C\cdot z_1^3}\eps\cdot\exp\bb{-\frac\eps C\sqrt {z_1}}<\frac14.
$$
Let $f$ be the function constructed in Theorem \ref{thm_small_technical} for this choice of parameters. Then, for every $z\in V$,
\begin{align*}
\abs{f(z)-h(z)}=\abs{f(z)-z_1}&\le \frac{C\cdot z_1\abs {z}^2}{\eps} \exp\bb{-\frac\eps C\abs {z}^{\frac12+\eps}}\\
&\le \frac{C\cdot z_1\abs {z}^2}{\eps} \exp\bb{-\frac\eps C \sqrt{\abs z}}<\frac14,
\end{align*}
following property \eqref{item:f_small_h} of $f$ and the lower bound imposed on $z_1$. We conclude that $f(V)\subset B\bb{z_1,\frac14}$. Similarly, for every $n\ge 1$ we have
\begin{equation}\label{eq_fB}
f(B(z_{n-1}, 1/4))\subset B(z_{n},1/4).
\end{equation}
In particular, for every $z\in V$ the iterates $\bset{f^n(z)}_{n=1}^\infty\subset\bset{\Rea(z)>0}$. Hence, by Montel's theorem, $V$ is contained in an unbounded Fatou component, $U_0$, of $f$, and for every $n\geq 1$, $B(z_{n},\frac{1}{4})$ is contained in a Fatou component $U_n$ of $f$. % 

Next, let 
$$
N\defeq \min\left\{n\geq 100 \colon \frac{z_n}{100}>\min_{z\in J(f)}\vert z\vert\text{ and } \frac {C\cdot z_1}{\eps}\exp\bb{\frac{ z_{k}^{\frac12+\eps}}{10}}<\frac{\exp\bb{\frac16 z_{k}^{\frac12+\eps}}}{100 {k}} \text{ for all }k\ge n\right\},
$$
and define $R_0 \defeq \frac{z_N}{100}$, noting that, as in the proof of Corollary \ref{cor_wd_pasting}, for every $r>R_0$ we have $M_f(r)>r$. Let $R_n\defeq M_f(R_{n-1})$. We will show by induction that 
\begin{equation*}
	R_{n}\le \frac{z_{n+N}}{100}.
\end{equation*}
Note that this holds for $n=0$. Assume it holds for $R_{n-1}$. Then, following property \eqref{item:f_small_bound} of $f$ and the choice of the sequence $\{z_n\}$,
\begin{align*}
	R_n&= M_f(R_{n-1})\le M_f\bb{\frac{z_{n-1+N}}{100}}\le \frac {C\cdot z_1}{\eps}\exp\bb{\bb{\frac{z_{n-1+N}}{100}}^{\frac12+\eps}}\\
	&\le  \frac {C\cdot z_1}{\eps}\exp\bb{\frac{z_{n-1+N}^{\frac12+\eps}}{10}}\le \frac{\exp\bb{\frac16z_{n-1+N}^{\frac12+\eps}}}{100(n-1+N)}=\frac{z_{n+N}}{100}.
\end{align*}

Lastly, for every $z\in V$ and $n\geq N$,
$$
\abs{f^{n}(z)}>z_{n}-\frac14>\frac{z_n}{100}\ge R_{n-N}, 
$$
and so $V\subset A(f)$.

We conclude that $U_n\subset A(f)$ is a wandering domain for all $n\geq 0$, see e.g. \cite[Corollary 4.2]{PG_fastescaping_12}. In particular, $U \defeq U_0$ is an unbounded wandering domain, and $f^n(V)\subset B(z_n,\frac14)\subset U_n$ for all $n\geq 1$, as claimed.
\end{proof}

As in the proof of Theorem \ref{thm:pasting_strips}, we will approximate our model map $h$ using \hyperref[thm:Hormander]{Hörmander's Theorem}. This time, the construction of the subharmonic function $u$ we shall use, will focus on keeping the order small, and therefore different tools will be used. See Figure \ref{fig:sub_sec3} for a schematic of the values that $u$ takes.

\begin{lemma}\label{lem:sh_power1/2} There exists a constant $c_0>4$ so that for every $\eps\in\left(0,\frac12\right]$ and sequences $\bset{z_n}, \bset{B_n}$ as described in Theorem  \ref{thm_small_technical}, there exists a subharmonic function, $u\colon \C \to \R$, such that:
	\begin{enumerate}[label=(\arabic*)]
		\item \label{item:u_small_1}For all $z\in \C$,
		$$
		u(z)\le\abs z^{\frac12+\eps};
		$$
		\item \label{item:u_small_2} For every $n\in \N$,
		\begin{enumerate}[label=$\bullet$]
		\item$u|_{B(z_n,1)}\le 0$.
		\item  $u|_{B\bb{z_n,\frac12}}\le -c\abs {z_n}^{\frac12+\eps}$ for some uniform constant $c\in\bb{0,1}$.
		%\adi{\item $u|_{3B_n\setminus2B_n}(z)\ge\frac{\abs z^{\frac12+\eps}}2$}.
		\end{enumerate}
		\item \label{item:u_small_3} 
		\begin{enumerate}[label=$\bullet$]
		\item If $\abs{\Arg(z)}>\pi-\frac\eps 2$, then $u(z)\leq 0$.
		\item If $\abs{\Arg(z)}>\pi-\frac\eps 4$, then
		$
		u(z)\le -\frac\eps 8\abs z^{\frac12+\eps};
		$
		\end{enumerate}
		\item \label{item:u_small_4}
		\begin{enumerate}[label=$\bullet$]
		\item If $\abs{\Arg(z)}\in\bb{\frac\pi2,\pi-\eps}$, then $u(z)\ge \frac\eps 4\abs z^{\frac12+\eps}$.
		\item If $\abs{\Arg(z)}<\frac\pi 3$ and $z\nin \bunion n 1 \infty B_n$, then $u(z)\ge \frac12\abs z^{\frac12+\eps}$.
		\end{enumerate}
	\end{enumerate}
	
\end{lemma}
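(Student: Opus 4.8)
\emph{Set-up.} The plan is to construct $u$ as a sum $u=v+\log|F|$, with $v$ carrying the angular profile and $F$ an entire function (a canonical product) whose zeros create the required ``wells'' over the disks $B(z_n,\cdot)$; both summands are subharmonic, hence so is $u$. Throughout write $\rho\defeq\tfrac12+\eps$.

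\emph{The angular part.} I would take $v(z)\defeq|z|^{\rho}g(\Arg z)$, where $g\colon[-\pi,\pi]\to\mathbb R$ is an even function to be chosen so that $|z|^{\rho}g(\Arg z)$ is subharmonic on $\mathbb C$ — i.e. $g''+\rho^{2}g\ge 0$ distributionally and the jumps of the angular derivative across $\mathbb R_{\pm}$ are non-negative — and so that $g\le 1$, $g\ge\tfrac12$ on $[0,\tfrac\pi3]$, $g\ge\tfrac\eps4$ on $(\tfrac\pi2,\pi-\eps)$, $g\le 0$ on $[\pi-\tfrac\eps2,\pi]$ and $g\le-\tfrac\eps8$ on $[\pi-\tfrac\eps4,\pi]$. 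Such a $g$ can be built by gluing solutions of $g''+\rho^{2}g=0$ with convex corners: e.g. set $g\equiv 1$ on $[0,\theta_1]$ and $g(\theta)=\cos\!\bb{\rho(\theta-\theta_1)}$ on $[\theta_1,\pi]$, where $\theta_1\defeq\pi-\tfrac\eps2-\tfrac\pi{2\rho}\in(0,\tfrac\pi2)$ is chosen exactly so that $g$ vanishes at $\theta=\pi-\tfrac\eps2$. Then the transition at $\theta_1$ is $C^{1}$, near $\mathbb R_{+}$ one simply has $v=|z|^{\rho}$ (subharmonic), and along $\mathbb R_{-}$ the angular derivative jumps \emph{up} since $g$ has a local minimum at $\pm\pi$; so $v$ is subharmonic. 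Using $\rho>\tfrac12$ one checks $g(\pi-\eps)=\sin(\rho\eps/2)>\tfrac\eps4$, $g(\pi-\tfrac\eps4)=-\sin(\rho\eps/4)<-\tfrac\eps8$ and $g\ge\cos\tfrac\pi3=\tfrac12$ on $[0,\tfrac\pi3]$, each with a small positive margin.

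\emph{The wells.} I would let $F(z)\defeq\prod_{n\ge1}\bb{1-z/z_n}^{N_n}$ with $N_n\asymp z_n^{\rho}/\log z_n$ and $N_n\log z_n\ge 2z_n^{\rho}$ (to keep $u$ finite, not $-\infty$, at $z_n$, spread each block of $N_n$ zeros uniformly over $B(z_n,\tfrac12)$, which perturbs all estimates only boundedly). Then on $B(z_n,\tfrac12)$ and $B(z_n,1)$ the $n$-th block of $\log|F|$ is $\le-N_n\log z_n\le-2z_n^{\rho}$, which with $v\le|z_n|^{\rho}$ there yields the wells of property~\ref{item:u_small_2} (the uniform $c\in(0,1)$ read off from $N_n\log z_n-z_n^{\rho}$); using the tower-exponential growth of $(z_n)$, the contributions of the remaining blocks near $B(z_n,\cdot)$, and the full sum $\log|F|$ at points lying off every $B_n$ and away from $\mathbb R_{+}$, are $o(|z|^{\rho})$, say $\le\eta'|z|^{\rho}$ with $\eta'=O(1/\log z_1)$; moreover $\log M_F(R)\le\eta'R^{\rho}$, so $F$ has order $\rho$, and rescaling $g$ by $1-\eta$ for a tiny $\eta>0$ gives property~\ref{item:u_small_1}.

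\emph{Assembly and main obstacle.} Properties~\ref{item:u_small_3} and~\ref{item:u_small_4} then follow by combining the profile $g$ with the bound $|\log|F||\le\eta'|z|^{\rho}$ valid off the disks. I expect the real difficulty to be purely quantitative: in this construction the angular profile is only of size $O(\eps)$ near $\mathbb R_{-}$ — a subharmonic function $|z|^{\rho}g(\Arg z)$ with the positivity of property~\ref{item:u_small_4} and a sign change at $\Arg z=\pi-\tfrac\eps2$ cannot descend faster than $\rho$-trigonometric convexity permits — so its safety margins near $\Arg z=\pi$ are only of order $\eps^{2}$, and one must take $z_1$ large enough (this is where the hypothesis $z_1>\exp(C/\eps)$ and the threshold constant $c_0$ of the statement enter) that $\eta'$ and the other accumulated errors lie below those margins, after a small clockwise adjustment of $\theta_1$ so that $g<0$ on all of $\{|\Arg z|>\pi-\tfrac\eps2\}$. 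The substance of the proof is thus the simultaneous, $\eps$-uniform reconciliation of all the competing inequalities rather than any single estimate.
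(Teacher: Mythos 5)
Your overall architecture (a Phragm\'en--Lindel\"of angular profile plus a correction digging ``wells'' over the disks) matches the paper's, but your wells are built \emph{globally}, as $\log|F|$ for a canonical product with $N_n\asymp z_n^{\rho}/\log z_n$ zeros near each $z_n$, whereas the paper performs a purely \emph{local} surgery: inside each $B_n$ it replaces $v$ by its Poisson integral plus $A_n\log\bb{|z-z_n|/r_n}$, and the only delicate point is the normal-derivative matching across $\partial B_n$ (a lower and an upper bound on $A_n$ that are compatible precisely when $\log r_n\gtrsim 1/(c\eps)$, i.e.\ when $z_1>\exp(c_0/\eps)$). The local surgery has the decisive advantage that $u=v$ \emph{exactly} outside the disks, so properties (1), (3) and (4) off $\bigcup_n B_n$ cost nothing at all.

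Your global product does not share this feature, and that is where your argument has a genuine gap, in exactly the place you flag as the main obstacle. For $\Rea(z)\le 0$ one has $|1-z/z_k|^2=1+|z|^2/z_k^2-2(|z|/z_k)\cos(\Arg z)\ge 1$, so $\log|F|\ge 0$ on the whole sector $|\Arg z|>\pi/2$; worse, at $|z|=z_n$, $\Arg z=\pi-\eps/4$, the $n$-th block alone contributes about $N_n\log 2\asymp z_n^{\rho}/\log z_n\ge (\log 2)\,\eps\,|z|^{\rho}/C$, and this leakage is forced by the depth requirement $N_n\log z_n\gtrsim z_n^{\rho}$ together with the genus-$0$ normalisation (spreading the zeros over $B(z_n,\tfrac12)$ changes nothing in the far field). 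Your profile's safety margin at that angle is $\sin(\rho\eps/4)-\eps/8=\Theta(\eps^{2})$, so the inequality you need, $\eps/C\lesssim\eps^{2}$, fails for all small $\eps$ once $C$ is a universal constant: enlarging $z_1$ only buys $\log z_1\ge C/\eps$, not $C/\eps^{2}$. The same mismatch defeats the ``small clockwise adjustment'' of $\theta_1$ proposed for the first bullet of (3), since the shift needed there is $\Theta(\eps/C)$ and it consumes the $\Theta(\eps^{2})$ margin of (4) at $\Arg z=\pi-\eps$. The gap is reparable --- redesign $g$ to descend with slope at least $\tfrac12+\Omega(1/C)$ past $\pi-\tfrac\eps2$, so that $g(\pi-\tfrac\eps4)\le-\tfrac\eps8-\Omega(\eps)$ while $g(\pi-\eps)\ge\tfrac\eps4$ and trigonometric $\rho$-convexity are preserved --- but as written the quantitative reconciliation you defer is precisely the step that does not close.
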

\begin{proof}
	The plan of this construction is to create a subharmonic function that is at most zero on the domain $\bset{z,\; \abs{\Arg(z)}>\pi-\frac\eps 2}$ and then use Poisson integrals to create `puddles', i.e., open disks where the function $u$ is very negative,  in the disks $B_n$. 
	
	For the rest of the proof, let us fix $\eps\in\left(0,\frac12\right]$.	We begin with constructing a subharmonic function, $v$, satisfying properties,\ref{item:u_small_1}, \ref{item:u_small_3} and~\ref{item:u_small_4}.
	Let $$
	v(z)\defeq \abs z^\alpha\cos\bb{\beta\cdot \Arg(z)}, \quad \text{for } \alpha\defeq \frac12+\eps \quad \text{ and } \quad  \beta\defeq \frac12+\frac\eps{4\pi-2\eps}.
	$$
Note that 	$$\frac\pi{2\beta}=\frac\pi{2\bb{\frac12+\frac\eps{4\pi-2\eps}}}=\frac\pi{1+\frac\eps{2\pi-\eps}}=\pi-\frac\eps2.
	$$
	Therefore, if $z$ is such that $\abs{\Arg(z)}\ge\frac\pi{2\beta}=\pi-\frac\eps2$, then $\cos(\beta\cdot \Arg(z))<0$, and so the first part of property \ref{item:u_small_3} holds for $v$. 
	To see the second part of this property, let $z$ be such that $\abs{\Arg(z)}\ge\pi-\frac\eps4$.  Then, for every $\eps \in \left(0,\frac12\right]$
	$$
	\cos\bb{\beta\cdot \Arg(z)}\le \cos\bb{\beta\cdot\bb{\pi-\frac\eps 4}}<-\frac\eps 8,
	$$
	implying that
	$$
	v(z)\le -\frac\eps8\abs z^{\frac12+\eps}.
	$$
	Similarly, if $z$ is such that  $\abs{\Arg(z)}\in\bb{\frac\pi 2, \pi-\eps}$, then for every  $\eps\in\left(0,\frac12\right]$
	$$
	\cos\bb{\beta \Arg(z)}\ge\cos\bb{\beta\bb{\pi-\eps}}\ge \frac\eps 4, 
	$$
	and so
	$$
	v(z)\ge \frac\eps4\abs z^{\frac12+\eps},
	$$
	implying the first part of property \ref{item:u_small_4}. For the second part note that if $\abs{\Arg(z)}<\frac\pi 3$, then since cosine is monotone decreasing,
	$$
	\cos\bb{\beta\cdot \Arg(z)}\ge\cos\bb{\frac\pi 3}=\frac12,
	$$
	implying that $$
	v(z)=\abs z^\alpha\cos\bb{\beta\cdot \Arg(z)}\ge \frac12\abs z^{\frac12+\eps},
	$$ as required. 
	Lastly, 
	$$
	\Delta v(z)=\abs z^{\alpha-2}\cos\bb{\beta \Arg(z)}\bb{\alpha\bb{\alpha-1}+\alpha-\beta^2}=\abs z^{\alpha-2}\cos\bb{\beta \Arg(z)}\bb{\alpha^2-\beta^2}>0,
	$$
 as $\alpha>\beta$  and so $v$ is a subharmonic function.
 
Next, we will attach the `puddles' in $B_n$ using Poisson integral formula (see, for example, \cite[Definition 1.2.3]{ransford}). For every $n\geq 1$, let $v_n(z)\defeq v\bb{r_n\cdot z+z_n}$, where $r_n\defeq z_n/9$ is the radius of $B_n$, so that $v_n\bb{\D}=v(B_n)$, and define
	$$
	u(z)\defeq  \begin{cases}
		P_\D\bb{v_n}\bb{\frac{z-z_n}{r_n}}+A_n\cdot \log\bb{\frac{\abs{z-z_n}}{r_n}},& z\in B_n,\\
		v(z),& \text{otherwise},
	\end{cases}
	$$
	where $A_n>0$ are constants that will be chosen momentarily, and $P_\D(v_n)$ is the Poisson integral of $v_n$. 
	It is immediate to see that properties \ref{item:u_small_3} and \ref{item:u_small_4} hold for $u$. Also since \ref{item:u_small_1} holds for $v$, by the maximum modulus principle, it also holds for $u$. It remains to prove property \ref{item:u_small_2} and that $u$ is subharmonic. Note that, by definition, for every $n$, $u(z_n)=-\infty$ and, therefore, if $A_n$ is large enough, then property \ref{item:u_small_2} holds. On the other hand, in order for the function $u$ to be subharmonic, we will see that $A_n$ cannot be too large. The rest of the proof is dedicated to proving that $A_n$ can be chosen to satisfy both requirements.
	
	\textbf{ Lower bound}: Note that for the first part of property \ref{item:u_small_2} to hold for $u$, $A_n$ needs to be chosen large enough so that 
	\begin{equation}\label{eq_An_1}
			\begin{split}
			\underset{\abs{z-z_n}=1}\max\; \left(P_\D\bb{v_n}\bb{\frac{z-z_n}{r_n}}+A_n\log\bb{\frac{\abs{z-z_n}}{r_n}}\right)&\le \underset{z\in B_n}\max\; v(z)-A_n\log(r_n)\\
			&\le\bb{\abs{z_n}+r_n}^{\frac12+\eps}-A_n\log(r_n)<0\\
			&\iff A_n\ge \frac{\bb{\abs{z_n}+r_n}^{\frac12+\eps}}{\log(r_n)}.
		\end{split}
	\end{equation}

	\textbf{ Upper bound}: To preserve subharmonicity, we will use the following claim.
	\begin{Claim}\label{obs:glueing}
		Let $\Omega\subseteq\C$ be a domain, and let $\Omega_1\subset\subset\Omega$ be a domain, so that $\partial\Omega_1$ is an orientable smooth curve. Every function $u$ which is continuous on $\Omega$ and subharmonic on $\Omega_1\cup \Omega\setminus\overline{\Omega_1}$, is subharmonic on $\Omega$ if on $\partial\Omega_1$ it satisfies
		$$
		\frac{\partial u}{\partial n_1}\le \frac{\partial u}{\partial n_2},
		$$
		where $n_1$ is the outer normal to $\Omega_1$ along $\partial\Omega_1$ and $n_2$ is the outer normal to $\Omega\setminus\Omega_1$ along~$\partial\Omega_1$. 
	\end{Claim}
\begin{subproof*}
One can show that $\Delta u \geq 0$ on $\Omega$ as a measure. This can be deduced by using Green's second identity (\cite[Theorem~1.9]{hayman_subharmonic}) to show that the integral of $u \Delta \phi$ with respect to the Lebesgue measure is non-negative for any smooth function $\phi$ compactly supported on a neighbourhood of $\Omega \setminus \overline{\Omega_1}$.
 \end{subproof*} 

 We will use it with $\Omega\defeq B\bb{z_n, r_n +\frac12}$, and $\Omega_1\defeq  B_n$. In this case, $\frac{\partial u}{\partial n_2}=\frac{\partial v}{\partial n}$. To calculate $\frac{\partial u}{\partial n_1}$ we will use Poisson-Jensen's formula (see, for example, \cite[Theorem~4.5.1]{ransford}). The problem is then reduced to showing that for every $\xi\in \partial\D$:

	\begin{align*}
		\frac{\partial u}{\partial n_1}(z_n+r_n\xi)&=\limit r {1^-}\frac{v\bb{z_n+ r_n\xi}-P_{\D}v_n(r \xi)}{1-r}+A_n=r_n \frac{\partial v}{\partial n}\bb{z_n+ r_n\xi}+\limit r {1^-}\frac{G_{\D}\bb{r \xi}}{1-r}+A_n\\
		&= r_n\frac{\partial v}{\partial n}\bb{z_n+ r_n\xi}-\frac{\partial G_{\D}}{\partial r}\bb{\xi}+A_n\le r_n\frac{\partial u}{\partial n_2}(z_n+r_n\xi)=r_n\frac{\partial v}{\partial n}\bb{z_n+ r_n\xi},
	\end{align*}
	where if $g_\D$ denotes Green's function for the unit disk, $\D$, then
	$$
	G_{\D}(\xi)\defeq \frac1{2\pi}\integrate {\D}{}{g_{\D}(\xi,y)\Delta v_n(y)}m(y).
	$$
	We conclude that in order to show subharmonicity of $u$, it is enough to restrict $A_n$ so that for every $\xi\in \partial\D$,
	\begin{equation}\label{eq_An_2}
	A_n\le \frac{\partial G_{\D}}{\partial r}\bb{\xi}.
	\end{equation}
	Let us begin by finding a lower bound for $\frac{\partial G_{\D}}{\partial r}\bb{\xi}$ to get an upper bound on $A_n$. Because the collection $\bset{\frac{\partial g_{\D}}{\partial n}\bb{\xi,\cdot}}_{\xi\in\partial\D}$ is uniformly integrable, we may switch between the integral and the derivative. By the way we chose $\alpha$ and $\beta$, for every $n\in \N$ and $y\in\D$,
	$$
	\Delta v_n(y)=r_n^2\Delta v(z_n+r_n\cdot y)=r_n^2\cdot \frac{v(z_n+r_n\cdot y)}{\abs {z_n+r_n\cdot y}^2}\bb{\alpha^2-\beta^2}>\frac{v(z_n+r_n\cdot y)}{\abs {\frac{z_n}{r_n}+ y}^2}\cdot\frac\eps 2.
	$$
	We see that
	\begin{eqnarray*}
		\frac{\partial G_{\D}}{\partial r}\bb{\xi}&=&\frac{\partial}{\partial r}\bb{\frac1{\pi}\integrate {\D}{}{g_{\D}(\xi,y)\Delta v_n(y)}m(y)}=\frac1{2\pi}\integrate {\D}{}{\frac{\partial g_{\D}(\cdot,y)}{\partial r}(\xi)\Delta v_n(y)}m(y)\\
		&\ge&\underset{y\in\D}\inf\; \Delta v_n(y)\frac1{2\pi}\integrate {\D}{}{\frac{\partial g_{\D}(\cdot,y)}{\partial r}(\xi)}m(y)\ge c\cdot\eps\cdot \bb{\abs {z_n}-r_n}^{\frac12+\eps}
	\end{eqnarray*}
	for some numerical constant $c\in (0,1)$, where we have used property \ref{item:u_small_4}. Thus, for $A_n$ to satisfy both \eqref{eq_An_1} and \eqref{eq_An_2}, we need to choose

	\begin{equation}\label{eq_An}
	\frac{\bb{\abs{z_n}+r_n}^{\frac12+\eps}}{\log(r_n)}\le A_n\le c\cdot\eps\cdot \bb{\abs {z_n}-r_n}^{\frac12+\eps}.
	\end{equation}
Note that
	$$
	\frac{\bb{\abs{z_n}+r_n}^{\frac12+\eps}}{\log(r_n)}\le c\eps\bb{\abs {z_n}-r_n}^{\frac12+\eps}\iff \log(r_n)>\bb{\frac{\abs{z_n}+r_n}{\abs{z_n}-r_n}}^{\frac12+\eps}\frac1{c\eps}=\bb{\frac54}^{\frac12+\eps}\frac1{c\eps},
	$$
	and the inequality on the right holds if $z_1> \exp(\frac{c_0}{\eps})$ for some numerical constant $c_0$ that can be chosen bigger than $4$. Thus, \eqref{eq_An} holds for $A_n\defeq c\cdot\eps\cdot \bb{\abs {z_n}-r_n}^{\frac12+\eps}$.
	
We are only left to show the second part of property \ref{item:u_small_2}. For $\abs{z-z_n}\le\frac12$, we have
		\begin{align*}
		u(z)& = P_\D\bb{v_n}\bb{\frac{z-z_n}{r_n}}+A_n\log\bb{\frac{\abs{z-z_n}}{r_n}}\\
		&\le P_\D\bb{v_n}\bb{\frac{z-z_n}{r_n}}+A_n\log\bb{\frac{1}{r_n}}-A_n\log(2)\le -A_n\log(2),
	\end{align*}
	concluding the proof.
\end{proof}
%%%%%%%%%%%%%%%%%%%%%%%%%%%%%%%%%%%%%%%%%%%%%%%%%%%%%%%%%%%%%%%%%%%%%%%%%%%%%%%%%%%%%%%%%%%%%%%%%%%%%%%%%%%%%%%%%%%%%%%%%%%%%%%%%%%%%%%%%%%%%%%%%%%%%%%%%%%%%%%%%%%%%%%%%%%%%%%%%%%%%%%%%%%%%%%%%%%%%%%%%%%%%%%%%%%%%%%%%%%%%%%%%%%%%%%%%%%%%%%%%%%%%%%%%%%%%%%%%%%%%%%%%%%%%%%%%%%%%%%%%%%%%%%%%%%%%%%%%%%%%%%%%%%%%%%%%%%%%%%%%%%%%%%%%%%%%%%%%%%%%%%%%%%%%%%%%%%%%%%%%%%%%%%%%%%%%%%%%%%%%%%%%%%%%%%%%%%%%%%%%%%%%%%%%%%%%%%%%%%%%%%%%%%%%%%%%%%

\begin{figure}[htp]
	\centering
	\def\svgwidth{\linewidth}
\begingroup%
\makeatletter%
\providecommand\color[2][]{%
	\errmessage{(Inkscape) Color is used for the text in Inkscape, but the package 'color.sty' is not loaded}%
	\renewcommand\color[2][]{}%
}%
\providecommand\transparent[1]{%
	\errmessage{(Inkscape) Transparency is used (non-zero) for the text in Inkscape, but the package 'transparent.sty' is not loaded}%
	\renewcommand\transparent[1]{}%
}%
\providecommand\rotatebox[2]{#2}%
\newcommand*\fsize{\dimexpr\f@size pt\relax}%
\newcommand*\lineheight[1]{\fontsize{\fsize}{#1\fsize}\selectfont}%
\ifx\svgwidth\undefined%
\setlength{\unitlength}{762.51968504bp}%
\ifx\svgscale\undefined%
\relax%
\else%
\setlength{\unitlength}{\unitlength * \real{\svgscale}}%
\fi%
\else%
\setlength{\unitlength}{\svgwidth}%
\fi%
\global\let\svgwidth\undefined%
\global\let\svgscale\undefined%
\makeatother%
\begin{picture}(1,0.68773234)%
	\lineheight{1}%
	\setlength\tabcolsep{0pt}%
	\put(0,0){\includegraphics[width=\unitlength,page=1]{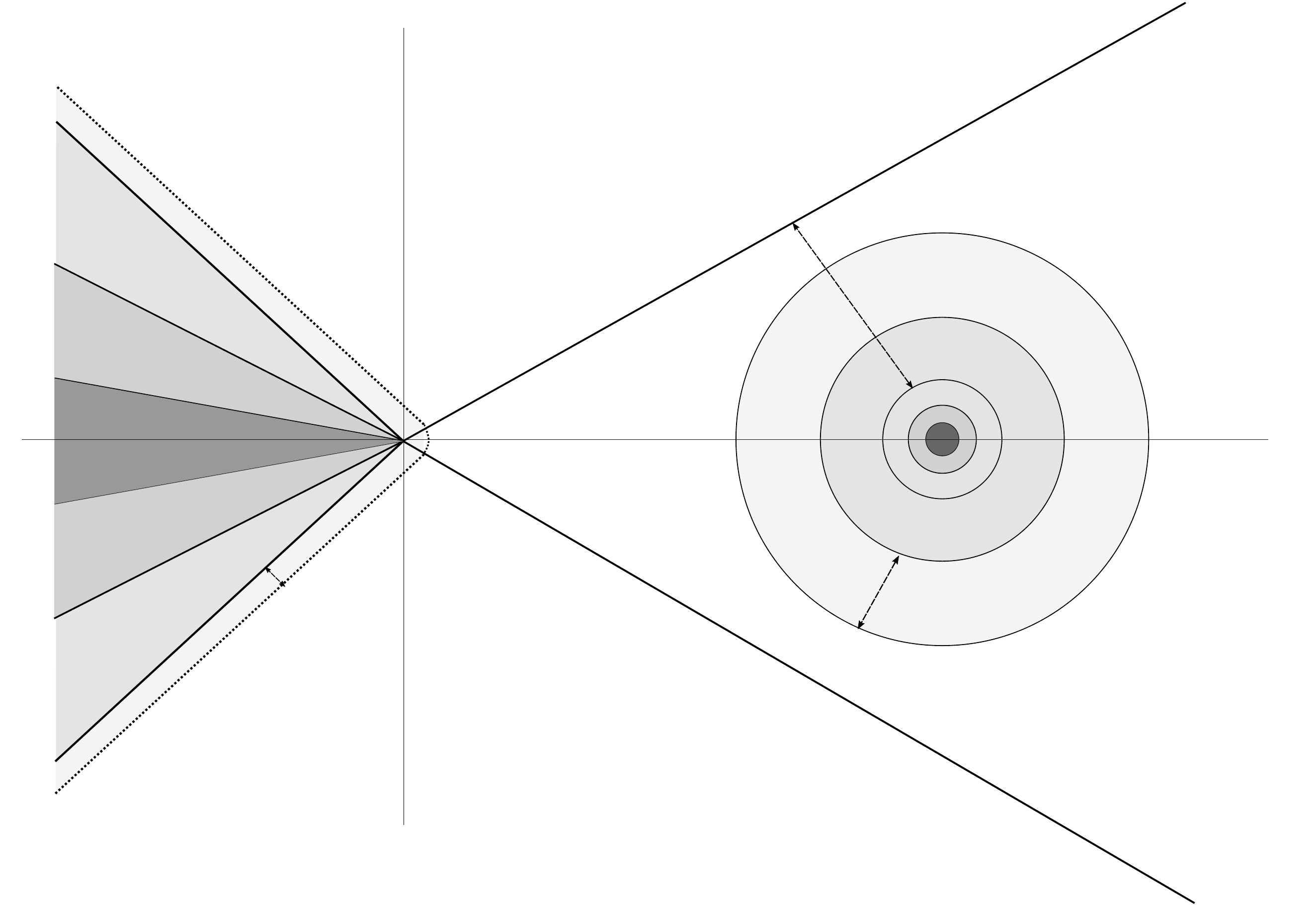}}%
	\put(0.78869126,0.41435206){\color[rgb]{0,0,0}\makebox(0,0)[lt]{\lineheight{1.25}\smash{\begin{tabular}[t]{l}$\fontsize{9pt}{1em}2B_n$\end{tabular}}}}%
	\put(0.75348324,0.38188213){\color[rgb]{0,0,0}\makebox(0,0)[lt]{\lineheight{1.25}\smash{\begin{tabular}[t]{l}$\fontsize{9pt}{1em}B_n$\end{tabular}}}}%
	\put(0.04704144,0.36164922){\color[rgb]{0,0,0}\makebox(0,0)[lt]{\lineheight{1.25}\smash{\begin{tabular}[t]{l}$\fontsize{9pt}{1em}u(z)\!\ll\! 0$\end{tabular}}}}%
	\put(0.61228631,0.51260299){\color[rgb]{0,0,0}\makebox(0,0)[lt]{\lineheight{1.25}\smash{\begin{tabular}[t]{l}$\fontsize{9pt}{1em}u(z)\!\gg\!0$\end{tabular}}}}%
	\put(0.0422522,0.49224972){\color[rgb]{0,0,0}\makebox(0,0)[lt]{\lineheight{1.25}\smash{\begin{tabular}[t]{l}$\fontsize{9pt}{1em}u(z)\!\gg\! 0$\end{tabular}}}}%
	\put(0.04573305,0.41867699){\color[rgb]{0,0,0}\makebox(0,0)[lt]{\lineheight{1.25}\smash{\begin{tabular}[t]{l}$\fontsize{9pt}{1em}u(z)\!\leq \!0$\end{tabular}}}}%
	\put(0.04961326,0.19569476){\color[rgb]{0,0,0}\makebox(0,0)[lt]{\lineheight{1.25}\smash{\begin{tabular}[t]{l}$\fontsize{9pt}{1em}\chi \equiv 1$\end{tabular}}}}%
	\put(0.68825185,0.28575242){\color[rgb]{0,0,0}\makebox(0,0)[lt]{\lineheight{1.25}\smash{\begin{tabular}[t]{l}$\fontsize{9pt}{1em}\chi \equiv 1$\end{tabular}}}}%
	\put(0.32392512,0.10443024){\color[rgb]{0,0,0}\makebox(0,0)[lt]{\lineheight{1.25}\smash{\begin{tabular}[t]{l}$\fontsize{9pt}{1em}\chi \equiv 0$\end{tabular}}}}%
	\put(0.68892629,0.22391612){\color[rgb]{0,0,0}\makebox(0,0)[lt]{\lineheight{1.25}\smash{\begin{tabular}[t]{l}$\fontsize{9pt}{1em}\nabla \chi \neq 0$\end{tabular}}}}%
	\put(0.21047015,0.25235419){\color[rgb]{0,0,0}\makebox(0,0)[lt]{\lineheight{1.25}\smash{\begin{tabular}[t]{l}$\fontsize{9pt}{1em}\nabla \chi \neq 0$\end{tabular}}}}%
	\put(0.41310063,0.38820447){\color[rgb]{0,0,0}\makebox(0,0)[lt]{\lineheight{1.25}\smash{\begin{tabular}[t]{l}$\frac{2\pi}{3}$\end{tabular}}}}%
	\put(0.83293215,0.46036089){\color[rgb]{0,0,0}\makebox(0,0)[lt]{\lineheight{1.25}\smash{\begin{tabular}[t]{l}$\fontsize{9pt}{1em}3B_n$\end{tabular}}}}%
	\put(0,0){\includegraphics[width=\unitlength,page=2]{u_xi_section3.pdf}}%
\end{picture}%
\endgroup%
	\caption{Schematic of the values that the subharmonic function $u$ from Lemma \ref{lem:sh_power1/2} and the function $\chi$ from the proof of Theorem \ref{thm_small_technical} take.}
	\label{fig:sub_sec3}
\end{figure}

\begin{proof}[Proof of Theorem \ref{thm_small_technical}] Fix $\eps\in (0, 1/2]$ and let $c_0>4$ be the numerical constant from Lemma \ref{lem:sh_power1/2}. For any $z_1> \exp(\frac{c_0}{\eps})$, let $\bset{z_n}\subset \C$ be a collection of points as in the statement of the theorem, with the corresponding disks $B_n\defeq B(z_n, r_n)$ for $r_n\defeq \frac{z_n}9$.
	
Note that a simple calculation shows that for our choice of $z_1$, the sequence $\{z_n\}$ is monotone increasing, and for every $n$,
$$
3B_n\cap 3B_{n-1}=\emptyset
$$
since otherwise,
$$
z_n-z_{n-1}<3\bb{r_n+r_{n-1}}<6\cdot r_n=\frac23 z_n,
$$
which forms a contradiction as $z_n\gg z_{n-1}$.

Define
$$T\defeq \bset{\abs{\Arg(z)}>\pi-\eps}^{+1}\setminus \bset{\abs{\Arg(z)}>\pi-\eps}.$$
\begin{Claim}We can choose a smooth map  $\chi\colon \C\to [0,1]$ such that:
\begin{enumerate}[label=(\roman*)]
		\item For every $z\in \bunion n 1 \infty 2B_n \cup \bset{\abs{\Arg(z)}>\pi-\eps}$, $\chi(z)=1$.
		\item For every $z\in \C\setminus\bb{\bunion n 1 \infty 3B_n \cup \bset{\abs{\Arg(z)}>\pi-\eps}^{+1}}$, $\chi(z)=0.$
		\item \label{item:chi_c}  $\nabla\chi$ is supported on 
		$
		T\cup\bunion n 1 \infty \bb{3B_n\setminus 2B_n}
		$
		and there exists a numerical constant $\tilde{C}>1$ so that
		$$
		\abs{\nabla\chi(z)}\le	\begin{cases}
			\tilde{C},& z\in T,\\
			\frac{\tilde{C}}{z_n},& z\in \bunion n 1 \infty (3B_n\setminus 2B_n).
		\end{cases}
		$$
	\end{enumerate}
\end{Claim}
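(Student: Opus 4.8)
The plan is to build $\chi$ as a sum of mollified indicator functions, one for the collar of the sector and one for each disk, exactly in the spirit of the cutoff built inside the proof of Theorem~\ref{thm:pasting_strips}. Write $\Sigma\defeq\bset{\abs{\Arg(z)}>\pi-\eps}$ for the open sector, so that $T=\Sigma^{+1}\setminus\Sigma$. Fix once and for all a radial bump function $\phi\colon\C\to[0,\infty)$ supported on the unit disk with $\int_\C\phi\,dm=1$ (as in \eqref{eq_bump}), and for $\rho>0$ let $\phi_\rho(w)\defeq\rho^{-2}\phi(w/\rho)$ be the rescaled mollifier, which is supported on $\bset{\abs w<\rho}$ and satisfies $\|\nabla\phi_\rho\|_{L^1}=\rho^{-1}\|\nabla\phi\|_{L^1}$. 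Define
$$
\psi\defeq \indic{\Sigma^{+1/2}}*\phi_{1/2},\qquad
\chi_n\defeq \indic{B\bb{z_n,\tfrac52 r_n}}*\phi_{r_n/2}\quad(n\ge 1),
$$
and set $\chi\defeq\psi+\sum_{n\ge 1}\chi_n$.

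First I would record the routine mollification facts about the summands. Since $B(z,\tfrac12)\subseteq\Sigma^{+1/2}$ whenever $z\in\Sigma$, we have $\psi\equiv1$ on $\Sigma$; since $B(z,\tfrac12)$ is disjoint from $\Sigma^{+1/2}$ whenever $z\notin\Sigma^{+1}$, we have $\psi\equiv0$ off $\Sigma^{+1}$; hence $\nabla\psi$ is supported in $T$ and $\abs{\nabla\psi}\le\|\nabla\phi_{1/2}\|_{L^1}=2\|\nabla\phi\|_{L^1}$. Likewise $\chi_n\equiv1$ on $2B_n$, $\chi_n\equiv0$ off $3B_n$, $\nabla\chi_n$ is supported in $3B_n\setminus 2B_n$, and $\abs{\nabla\chi_n}\le\|\nabla\phi_{r_n/2}\|_{L^1}=\tfrac2{r_n}\|\nabla\phi\|_{L^1}=\tfrac{18}{z_n}\|\nabla\phi\|_{L^1}$, using $r_n=z_n/9$. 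Thus, taking $\tilde C$ to be any numerical constant exceeding $\max\bset{1,\,18\|\nabla\phi\|_{L^1}}$, properties (i), (ii) and~\ref{item:chi_c} will follow \emph{provided} the sets $\supp\psi,\supp\chi_1,\supp\chi_2,\dots$ are pairwise disjoint: disjointness forces $\chi$ to be $[0,1]$-valued; $\chi$ is smooth because $\bset{3B_n}$ is locally finite (as $z_n\nearrow\infty$), so $\chi$ coincides locally with a finite sum of smooth functions; on $\Sigma$ only the term $\psi\equiv1$ contributes and on $2B_n$ only $\chi_n\equiv1$, giving (i); off $\bigcup_n 3B_n\cup\Sigma^{+1}$ every term vanishes, giving (ii); and on a point of $T$ (resp.\ of $3B_n\setminus 2B_n$) the only possibly nonzero gradient among $\nabla\psi,\nabla\chi_1,\nabla\chi_2,\dots$ is $\nabla\psi$ (resp.\ $\nabla\chi_n$), giving~\ref{item:chi_c}.

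The one point that is not bookkeeping, and which I expect to be the main obstacle, is this disjointness of supports. Pairwise disjointness of the $3B_n$ (and of their closures) is already established above, since $z_n\gg z_{n-1}$; and $\supp\psi\subseteq\overline{\Sigma^{+1}}$, $\supp\chi_n\subseteq\overline{3B_n}$, so it remains to see that $\overline{3B_n}\cap\overline{\Sigma^{+1}}=\emptyset$ for every $n$. Here the geometry does the work: since $3B_n=B(z_n,z_n/3)$ with $z_n>0$, every $w\in\overline{3B_n}$ satisfies $\abs w\ge\tfrac23 z_n$ and $\abs{\Arg(w)}\le\arctan(1/2)<\tfrac\pi4$. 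On the other hand, for $\eps\le\tfrac12$ the angular separation between $\Sigma$ and the cone $\bset{\abs{\Arg(z)}\le\arctan(1/2)}$, namely $\pi-\eps-\arctan(1/2)$, exceeds $\tfrac\pi2$; hence for such $w$ the nearest point of the convex cone $\Sigma$ is its apex $0$, so $\dist(w,\Sigma)=\abs w$. Consequently $w\in\overline{\Sigma^{+1}}$ would force $\abs w\le 1$, which is impossible since $z_1>\exp(C/\eps)$ makes $\tfrac23 z_n\ge\tfrac23 z_1>1$. This yields the required disjointness and completes the proof of the claim.
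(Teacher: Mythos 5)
Your proof is correct and takes essentially the same approach as the paper: a smooth cutoff obtained by convolution with a bump function, with the only cosmetic difference that the paper convolves a fixed-radius bump against a piecewise-linear ``trapezoid'' of slope $\sim 1/r_n$, whereas you convolve indicator functions against mollifiers of scale $\sim r_n$ (the factor $1/z_n$ entering through $\|\nabla\phi_{r_n/2}\|_{L^1}$ instead of through $\sup|\nabla\beta|$). Your explicit verification that $\overline{3B_n}$ is disjoint from $\overline{\Sigma^{+1}}$ is a detail the paper leaves implicit, and it is handled correctly.
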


\begin{subproof}
To prove the existence of such a smooth map, one repeats the argument presented in the proof of Theorem \ref{thm:pasting_strips}, with $\eps_k=r_n$ and defines $t_k$, for $k\geq 1$, using convolution of the function $b(z)$, see \eqref{eq_bump}, with
	$$\beta(z)\defeq
	\begin{cases}
		1,& z\in \bunion n 1 \infty B\bb{z_n,2r_n+1},\\
		0,& z\nin \bunion n 1 \infty B\bb{z_n,3r_n-1},\\
		\frac1{r_n-2}\bb{3r_n-1-\abs{z-z_n}},& z\in \bunion n 1 \infty (B\bb{z_n,3r_n-1}\setminus B\bb{z_n,2r_n+1}).
	\end{cases}
	$$
	A map $t_0$ can be constructed in a similar way to accommodate the set $T$ and define the desired function as $\chi(z)\defeq \sumit k 0\infty t_k(z)$.
\end{subproof}

Define $g\colon \C\to\C$ as $g(z)\defeq \bar\partial\bb{\chi(z)\cdot h(z)}$, where the map $h$ is defined in \eqref{eq_h}, noting that $g(z)=\bar\partial\chi(z)\cdot h(z).$ Moreover, let $u\colon \C\to \R$ be the subharmonic function provided by Lemma~\ref{lem:sh_power1/2}. We shall obtain our entire function $f$ by applying \hyperref[thm:Hormander]{Hörmander's Theorem} to the functions $u$ and $g$. For that, we first require to show finiteness of the integral in the statement of the theorem.

\begin{Claim} For the functions $g$ and $u$ specified above, we have
	\begin{equation}\label{eq_bound_int}
		\int_\C\abs {g(z)}^2e^{-u(z)}dm(z)< \bb{\frac{10\tilde C z_1}{\eps}}^2,
	\end{equation}	
	where $\tilde C>1$ is the constant from property \ref{item:chi_c} of the function $\chi$.
\end{Claim}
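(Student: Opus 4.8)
The plan is to use that $g=\bar\partial\chi\cdot h$ is supported exactly where $\nabla\chi\neq 0$, which by property~\ref{item:chi_c} of $\chi$ is contained in $T\cup\bigcup_{n\ge1}(3B_n\setminus 2B_n)$, and that on each of these pieces $h$ is a constant: $h\equiv z_1$ on $T\subset\{|\Arg z|>\pi-\eps\}^{+1}$, and $h\equiv z_{n+1}$ on $3B_n$. I would therefore split
\[
\int_\C|g|^2e^{-u}\,dm=\int_T|g|^2e^{-u}\,dm+\sum_{n\ge 1}\int_{3B_n\setminus 2B_n}|g|^2e^{-u}\,dm,
\]
and, using $|\bar\partial\chi|\le|\nabla\chi|$ together with property~\ref{item:chi_c}, bound the integrand by $|g|^2\le\tilde C^2z_1^2$ on $T$ and by $|g|^2\le\tilde C^2z_{n+1}^2/z_n^2$ on $3B_n\setminus 2B_n$. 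It then remains to estimate $e^{-u}$ on each region via Lemma~\ref{lem:sh_power1/2}.

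For the annular terms, note that $3B_n\subset\{|\Arg z|<\arcsin(1/3)\}\subset\{|\Arg z|<\pi/3\}$ and that the discs $\{3B_m\}_m$ are pairwise disjoint (as shown at the start of the proof of Theorem~\ref{thm_small_technical}), so any $z\in 3B_n\setminus 2B_n$ lies outside every $B_m$; hence property~\ref{item:u_small_4} (second bullet) gives $u(z)\ge\frac12|z|^{1/2+\eps}\ge\frac12(2z_n/3)^{1/2+\eps}\ge\frac13 z_n^{1/2+\eps}$, the last step using $(2/3)^{1/2+\eps}\ge 2/3$ for $\eps\le\frac12$. Combining this with $\operatorname{area}(3B_n)=\frac{\pi}{9}z_n^2$ and with the recursion $z_{n+1}=n^{-1}\exp(\frac16 z_n^{1/2+\eps})$ — which yields $z_{n+1}^2e^{-\frac13 z_n^{1/2+\eps}}\le n^{-2}$ — each annular term is at most $\frac{\pi}{9}\tilde C^2 n^{-2}$, so the whole sum is bounded by a fixed numerical multiple of $\tilde C^2$. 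Since $z_1>\exp(c_0/\eps)>1/\eps$, this contribution is negligible against the target bound; this half of the argument is clean precisely because the sequence $\{z_n\}$ was engineered so that the growth of $z_{n+1}^2$ is exactly cancelled by the decay of $e^{-u}$ on $3B_n$.

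The term over $T$ is the delicate one, since $T$ is unbounded. On all of $T$ one has $|\Arg z|\le\pi-\eps<\pi-\frac\eps2=\frac{\pi}{2\beta}$, and since $T$ misses every puddle we have $u=v$ on $T$, so $\cos(\beta\Arg z)>0$ forces $u=v\ge 0$ there; together with the fact that $T\cap\{|z|\le R_\ast\}$ has area $O(1)$ for a fixed numerical radius $R_\ast$ (e.g.\ $R_\ast=2$ works for all $\eps\le\frac12$), this handles the bounded part. For the tail I would first record the elementary geometric fact that a point of $T$ with $|z|\ge R_\ast$ has $|\Arg z|\in(\frac\pi2,\pi-\eps)$ — the Euclidean-width-$1$ layer along the two boundary rays of the cone $\{|\Arg z|>\pi-\eps\}$ has angular half-width $\arcsin(1/|z|)$, which is $<\frac\pi2-\eps$ once $|z|\ge 1/\cos\eps$, and $1/\cos\eps\le 2$ for $\eps\le\frac12$ — and then invoke property~\ref{item:u_small_4} (first bullet) to upgrade to $u(z)\ge\frac\eps4|z|^{1/2+\eps}$. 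Since the same geometry gives $\operatorname{area}(T\cap\{r<|z|<r+dr\})=O(dr)$, the substitution $r=s^2$ yields
\[
\int_{T\cap\{|z|\ge R_\ast\}}|g|^2e^{-u}\,dm\le \tilde C^2z_1^2\int_{R_\ast}^{\infty}O(1)\,e^{-\frac\eps4 r^{1/2+\eps}}\,dr=O\!\left(\frac{\tilde C^2z_1^2}{\eps^2}\right).
\]
Adding the two parts over $T$, absorbing the numerical-constant contribution from the annuli (possible since $z_1>1/\eps$), and tracking the numerical constants carefully gives $\int_\C|g|^2e^{-u}\,dm<(10\tilde C z_1/\eps)^2$. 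The main obstacle is exactly this $T$-estimate: one must describe the boundary layer $T$ precisely enough to know both that $u$ there obeys the quantitative bound $u\ge\frac\eps4|z|^{1/2+\eps}$ and that $\operatorname{area}(T\cap\{|z|<r\})$ grows only linearly in $r$, and then bring all the constants in under the threshold $100$.
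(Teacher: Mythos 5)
Your proof is correct and follows essentially the same route as the paper's: the same splitting of the integral over $T$ and the annuli $3B_n\setminus 2B_n$, the same bounds $|g|\le\tilde C z_1$ and $|g|\le\tilde C z_{n+1}/z_n$ from property \ref{item:chi_c}, the same use of the two bullets of Lemma~\ref{lem:sh_power1/2}\ref{item:u_small_4}, and the same cancellation of $z_{n+1}^2$ against $e^{-\frac13 z_n^{1/2+\eps}}$ built into \eqref{eq_fast_zn}. Your treatment of $T$ is in fact slightly more careful than the paper's (which passes directly to the one-dimensional integral $2\int_0^\infty e^{-\frac{\eps}{4}t^{1/2+\eps}}\,dt$), since you separately handle the bounded part of $T$ near the origin where $|\Arg z|\le\pi/2$ and only $u\ge 0$ is available.
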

\begin{subproof}
Following property \ref{item:chi_c}  of $\chi$ and \eqref{eq_h} we see that
$$
\int_\C\abs {g(z)}^2e^{-u(z)}dm(z) \le \tilde{C}^2\bb{\int_{T}   z_1^2e^{-u(z)}dm(z)+\sumit n 1 \infty \frac{z_{n+1}^2}{z_n^2}\integrate{3B_n\setminus 2B_n}{}{e^{-u(z)}}{m(z)}}.
$$
We will begin by bounding the integral on each disk,  $B_n$. Note that if $z\in 3B_n$, then
$$
\abs{\Arg(z)}\le \arctan\bb{\frac{3r_n}{z_n-3r_n}}=\arctan\bb{\frac12}<\frac\pi 3.
$$
Using the second part of property \ref{item:u_small_4} of $u$, 
\begin{align*}
\integrate{3B_n\setminus 2B_n}{}{e^{-u(z)}}{m(z)}&\le m(3B_n\setminus 2B_n)\cdot\underset{z\in 3B_n\setminus 2B_n}\sup\; e^{-u(z)}\le 5\pi\cdot r_n^2\cdot e^{-\frac12\underset{z\in 3B_n\setminus 2B_n}\inf\; \abs z^{\frac12+\eps}}\\
&\le 5\pi\cdot r_n^2\cdot e^{-\frac12\bb{z_n-3r_n}^{\frac12+\eps}}\le  5\pi\cdot r_n^2\cdot e^{-\frac13\cdot z_n^{\frac12+\eps}}=\frac{5\pi\cdot r_n^2}{z_{n+1}^2\cdot n^2},
\end{align*}
by the way the sequence $\bset{z_n}$ is defined.

Next, along the sides of the sector, on the set $T$, we will use the first part of property \ref{item:u_small_4} of $u$, 
\begin{align*}
\int_T   z_1^2e^{-u(z)}dm(z)&=z_1^2\cdot 2\integrate 0 \infty{\exp\bb{-\frac\eps 4\cdot  t^{\frac12+\eps}}}t\le \frac{64z_1^2}{\eps^2}.
\end{align*}
Combining both estimates together we see that,
\begin{align*}
&\int_\C\abs {g(z)}^2e^{-u(z)}dm(z) \le \tilde{C}^2\bb{\int_T  z_1^2e^{-u(z)}dm(z)+\sumit n 1 \infty \frac{z_{n+1}^2}{z_n^2}\integrate{3B_n\setminus 2B_n}{}{e^{-u(z)}}{m(z)}}\\
&\le \tilde{C}^2\bb{\frac{64z_1^2}{\eps^2}+ \sumit n 1\infty\frac{z_{n+1}^2}{z_n^2}\cdot \frac{5\pi\cdot r_n^2}{z_{n+1}^2\cdot n^2}}= \tilde{C}^2\bb{\frac{64z_1^2}{\eps^2}+ \sumit n 1\infty\frac{5\pi}{81\cdot n^2}}\le 100\tilde C^2\cdot\frac{z_1^2}{\eps^2},
\end{align*}
concluding the proof.
\end{subproof}

By \hyperref[thm:Hormander]{Hörmander's Theorem}, there exists a smooth function $\alpha$ so that $\bar\partial\alpha(z)=g(z)$ and (\ref{eq:Hormander}) holds. Define $f\colon \C\to \C$ as 
$$f(z)\defeq \chi(z)\cdot h(z)-\alpha(z), $$
and note that arguing as in \eqref{eq_f_entire}, it follows that $f$ is entire.

To prove the properties of $f$ we argue similarly as in the proof of Theorem \ref{thm:pasting_strips}. For the rest of the proof, let us fix the constant
\begin{equation}\label{eq_C}
C\defeq 800 \left(\tilde{C}+\frac{1}{c}+c_0\right)>800,
\end{equation}
where $c\in (0,1)$ is the constant from Lemma \ref{lem:sh_power1/2}\ref{item:u_small_2}.

To show property \eqref{item:f_small_h} of $f$, let $z\in B\bb{z_n,\frac14}\cup \bset{\abs{\Arg(z)}>\pi-\frac\eps 4}^{-\frac14}$, and observe that $B\bb{z,\frac14}\subset B\bb{z_n,\frac12}\cup \bset{\abs{\Arg(z)}>\pi-\frac\eps 4}$. Then, by properties \ref{item:u_small_2} and \ref{item:u_small_3} of the subharmonic function $u$,
	$$
	u|_{B\bb{z,\frac14}}(w)\le 	\begin{cases}
		-\frac{c}{2}\abs {z_n}^{\frac12+\eps},& w\in B\bb{z_n,\frac12},\\
		-\frac\eps 8\abs z^{\frac12+\eps},& w\in\bset{\abs{\Arg(z)}>\pi-\frac\eps 4}, 
	\end{cases}
	$$
	while $\chi|_{B\bb{z,\frac14}}\equiv 1$, i.e., $\alpha$ is holomorphic in $B\bb{z,\frac14}$. In particular, 
		\begin{equation}\label{eq_max_ball}
		\underset{w\in B\bb{z,\frac14}}\max\; e^{\frac12\cdot u(w)}  \le \exp\bb{-\frac\eps C\abs z^{\frac12+\eps}}.
		\end{equation}
	Using Cauchy's integral formula, Cauchy–Schwarz inequality, \hyperref[thm:Hormander]{Hörmander's Theorem}, and equations \eqref{eq_bound_int}, \eqref{eq_max_ball} and \eqref{eq_C}, we have that
	\begin{align*}
		\abs{f(z)-h(z)}&=\abs{\alpha(z)}=\frac{16}\pi\abs{\integrate{B\bb{z,\frac14}}{}{\alpha(w)}{m(w)}}\le 3\sqrt{\integrate{B\bb{z,\frac14}}{}{\abs{\alpha(w)}^2}{m(w)}}\\
		&\le 3\underset{w\in B\bb{z,\frac14}}\max\; e^{\frac12\cdot u(w)}\cdot\bb{1+\abs w^2}\sqrt{\integrate{\C}{}{\abs{\alpha(w)}^2\frac{e^{-u(w)}}{\bb{1+\abs w^2}^2}}{m(w)}}\\
		&\le 10\cdot \abs z^2 \underset{w\in B\bb{z,\frac14}}\max\; e^{\frac12\cdot u(w)}  \cdot \frac{10\tilde C z_1}{\eps}\le\frac{C\cdot z_1\abs z^2}\eps\exp\bb{-\frac\eps C\abs z^{\frac12+\eps}},	
	\end{align*}
	concluding the proof of \eqref{item:f_small_h}.
	
To show that \eqref{item:f_small_bound} holds for $f$, note that for every $z\in\C$,
\begin{equation}\label{eq_global_bound}
\abs{\chi(z)\cdot h(z)}\le z_1\exp\bb{\abs z^{\frac12+\eps}}
\end{equation}
by the way the sequence $\bset{z_n}$ was defined in \eqref{eq_fast_zn}. Then, using Cauchy's integral formula, Cauchy–Schwarz inequality, \eqref{eq_global_bound}, \hyperref[thm:Hormander]{Hörmander's Theorem},  property \ref{item_u_pasting_1} of the subharmonic function $u$ and \eqref{eq_C}, the following holds. 
\begin{align*}
		\abs{f(z)}&=\abs{\chi(z)\cdot h(z)-\alpha(z)}=\frac1\pi\abs{\integrate{B(z,1)}{}{\chi(w)\cdot h(w)-\alpha(w)}{m(w)}}\\
		& \le \frac1{\sqrt \pi}\sqrt{\integrate{B\bb{z, 1}}{}{\abs{\chi(w)\cdot h(w)-\alpha(w)}^2}{m(w)}}\\
		&\le 2 \abs {\chi(z)\cdot h(z)}+2\sqrt{\integrate{B(z,1)}{}{\abs{\alpha(w)}^2}{m(w)}}\\
		&\le 2 z_1\exp\bb{\abs z^{\frac12+\eps}}+2\underset{w\in B(z,1)}\max\; e^{\frac12u(w)}\cdot\bb{1+\abs w^2}\cdot\sqrt{ \integrate{B(z,1)}{}{\abs{\alpha(w)}^2\frac{e^{-u(w)}}{\bb{1+\abs w^2}^2}}{m(w)}}\\
		&\le 2 z_1\exp\bb{\abs z^{\frac12+\eps}}+2\underset{w\in B(z,1)}\max\; e^{\frac12\abs w^{\frac12+\eps}}\bb{1+\abs w^2}\cdot 10\cdot \frac{10\tilde C z_1}{\eps} \le \frac {C\cdot z_1}{\eps}\exp\bb{\abs z^{\frac12+\eps}}.
	\end{align*}
	
It is left to prove that the order of growth of $f$ is exactly $\frac12+\eps$. To bound the growth rate from above, note that using \eqref{item:f_small_bound}, for every $z\in\C$,
\begin{align*}
	\frac{\log\log\abs{f(z)}}{\log\abs z}\le \frac{\log\log\bb{\frac {C\cdot z_1}{\eps}\exp\bb{\abs z^{\frac12+\eps}}}}{\log\abs z}&< \frac{\log\bb{\frac {C\cdot z_1}{\eps}}+\log\bb{\abs z^{\frac12+\eps}}}{\log\abs z}\\
	&\le \frac12+\eps+\frac{\log\bb{\frac {C\cdot z_1}{\eps}}}{\log\abs z}.
\end{align*}
In particular, $\rho(f)\leq \frac 12+\eps$. To see a bound from below note that for every $z\in B\bb{z_n,\frac14}$ we have
\begin{align*}
	\frac{\log\log\abs{f(z)}}{\log\abs z}&\ge \frac{\log\log\bb{z_{n+1}-\abs{f(z)-h(z)}}}{\log\bb{z_n+\frac14}}\ge\frac{\log\log\bb{z_{n+1}-\frac {C\cdot z_1\abs {z}^2 }\eps\cdot e^{-\frac\eps C\abs {z}^{\frac12+\eps}}}}{\log\bb{z_n+\frac14}}\\
	&\ge \frac{\log\log\bb{z_{n+1}}}{\log\bb{z_n}}-o(1)=\frac{\log\log\left(\frac{\exp\bb{\frac16 z_n^{\frac12+\eps}}}n\right)}{\log(z_n)}-o(1)\ge \bb{\frac12+\eps}-o(1),
\end{align*}
as $n\rightarrow\infty$ implying that $\rho(f)\ge \frac 12+\eps$ concluding the proof.
\end{proof}
\bibliographystyle{alpha}
\bibliography{biblioUnbbWD}
\end{document}